\newtheorem{theorem}{Theorem}
\numberwithin{theorem}{section}
\newtheorem{corollary}[theorem]{Corollary}
\newtheorem{remark}[theorem]{Remark}
\newcommand{\Cc}{{\mathbb C}}
\newcommand{\Rr}{{\mathbb R}}
\newcommand{\Zz}{{\mathbb Z}}
\newcommand{\Pp}{{\mathbb P}}
\newcommand{\sD}{{\mathcal D}}
\newcommand{\sW}{{\mathcal W}}
\newcommand{\suchthat}{\,|\,}
\newcommand{\fapprox}{{\tilde{f}}}
\newcommand{\sF}{{\mathcal F}}
\newcommand{\sG}{{\mathcal G}}
\newcommand{\sH}{{\mathcal H}}
\newcommand{\sN}{{\mathcal N}}
\newcommand{\Hessian}{{{\hbox{\rm Hessian}}}}
\newcommand{\NULL}{{{\hbox{\rm null~}}}}
\title{Robust Numerical Algebraic Geometry}
\author{Emma R. Cobian}
\author{Jonathan D. Hauenstein}
\author{Charles W. Wampler}
\affil{University of Notre Dame}
\date{}
\begin{document}

\maketitle

\begin{abstract}
\noindent The field of numerical algebraic geometry consists of 
algorithms for numerically solving systems of polynomial equations.
When the system is exact, such as having 
rational coefficients, 
the solution set is well-defined.  However,
for a member of a parameterized family of polynomial systems
where the parameter values may be measured with imprecision 
or arise from prior numerical computations, uncertainty may
arise in the structure of the solution set, including the number
of isolated solutions, the existence of higher dimensional solution
components, and the number of irreducible components along with their 
multiplicities. The loci where these structures change form a
stratification of exceptional algebraic sets in the space of parameters. We describe methodologies for
making the interpretation of numerical results more robust
by searching for nearby parameter values on an exceptional set. We demonstrate 
these techniques on several illustrative examples and 
then treat several more substantial problems
arising from the kinematics of mechanisms and robots.
\end{abstract}

\section{Introduction}

Numerical algebraic geometry concerns algorithms for numerically solving 
systems of polynomial equations, primarily based on homotopy methods,
often also referred to as polynomial continuation. 
Reference texts for numerical algebraic
geometry are \cite{BertiniBook,SW05}, and
software packages that implement its algorithms are available in \cite{Bertini,HomContJL2018,NAG4M2,PHCpack2011}.
Built on a foundation 
of methods for finding all isolated solutions, the field has grown
to include algorithms for computing the irreducible decomposition of 
algebraic sets along with operations 
such as membership testing, intersection, and projection.
The basic construct of the field is a \emph{witness set}, say~$\sW$, in
which a pure $D$-dimensional algebraic set, say $X\subset\Cc^n$, is represented by a structure having three members:
\begin{equation}\label{eq:witnessSet}
  \sW=\{f,L,W\}
\end{equation}
where $f:\Cc^n\rightarrow\Cc^k$ is a polynomial system such that $X$ is a
$D$-dimensional component of $V(f)=\{x\in\Cc^n\suchthat f(x)=0\}$, 
$L:\Cc^n\rightarrow\Cc^D$ is a 
\emph{slicing system} of $D$ generic linear polynomials,
and $W=X\cap V(L)$ is a \emph{witness point set} for $X$.
Given a witness set, one can sample the set it represents by moving its slicing system
in a homotopy. Given witness sets for two components, one can compute their intersection, 
obtaining witness sets for the components of the intersection \cite{HW}. 
A witness set can be decomposed into its irreducible components using monodromy \cite{Monodromy} 
to group together points on the same irreducible component and using a trace test~\cite{BHL,SparseTrace,HR15,TraceLRS,SVWtraces} to verify when this process is complete. Algorithms built on these and related techniques 
compute a numerical irreducible decomposition of $V(f)$,
producing a collection of witness sets, one for each irreducible component.
In a similar fashion, one may construct pseudo-witness sets for projections
of algebraic sets \cite{WitnessProjection,MembershipProjection}, 
which is the geometric equivalent of symbolic elimination. 
Altogether, using algorithms for intersection, union, projection, and membership testing, one can represent and manipulate constructible algebraic sets. 

Applications of algebraic geometry often involve parameterized families of
polynomial systems of the form $f(x;p):\Cc^n\times\Cc^m\rightarrow\Cc^k$, where $x$ is an array
of variables and $p$ is an array of parameters. For example, in kinematic analysis, $x$ may
be variables describing the relative displacement at joints between parts while $p$ may 
describe the length of links or the axis of a rotational joint. In kinematic synthesis,
where one seeks to find a mechanism to produce a desired motion, these roles may be interchanged. 
The long history of research in kinematics and its applications to mechanisms and robotics is
extensive; \cite{raghavan1995solving,AlgKin} provide useful overviews.
In multi-view computer vision, the variables of a scene reconstruction describe the location of
objects and cameras in three-dimensional space while the parameters are the coordinates of features
observed in the camera images \cite{agarwal2021multiview,fabbri2022trifocal,heyden1997algebraic,ma2004invitation}. 
In chemical equilibrium, concentrations are variables and
reaction rates are parameters \cite[Chap.~9]{morgan2009solving}
and \cite{dickenstein2020,perez2012chemical}. 
In short, in a single instance of the family,
variables are the unknowns while parameters are given,
and the parameter space defines a family of problems having the same polynomial structure.
In the simplest case, the parameters are merely the coefficients of polynomials with
a fixed set of monomials while, in many applications, the coefficients are often polynomial functions of
the parameters. 
In general, one can consider a parameter space that is an irreducible 
algebraic set or, with minor additional conditions, even a complex analytic set \cite{CoeffParam}.
However, for simplicity, we will assume here that 
the parameter space is the complex Euclidean space $\Cc^m$.

The algorithms of numerical algebraic geometry
compute using floating point arithmetic, so function evaluations and solution points
are consequently inexact. Furthermore, in applications, parameters may be values measured
with imprecision or they may be numerical values produced in prior stages of computation.
In light of these uncertainties, the results reported by the
algorithms may require interpretation. For example, a coordinate 
of a solution point computed as near zero may indicate that 
there is an exact solution nearby with that coordinate
exactly zero, but this is not assured. Moreover, if the polynomial system
is given with uncertain parameters or with coefficients presented as floating point values, there is some
uncertainty about which problem is being posed. The interpretation of
the solution set, such as classifying how many endpoints of a homotopy
have converged to finite solution points versus how many have diverged to infinity,
is subject to judgements about round-off errors from inexactly evaluating
inexactly specified polynomials. 
Similar judgements must be made when
categorizing singular versus nonsingular solutions, 
sorting solutions versus nonsolutions when utilizing randomization,
concluding that a witness set is complete via a trace test,
or counting multiplicities according to how many homotopy paths
converge to~(nearly) the same point. 
Our aim in this article is to lay out methodologies for making these
interpretations more robust yielding {\em robust numerical algebraic geometry}.

In a parameterized family, the irreducible decomposition has the same structure for generic parameters, 
that is, the sets of parameters where the structure changes lie on proper
algebraic subsets of the parameter space. 
By structure, we mean features over the complex numbers 
described by integers,
such as the number of irreducible components, their dimensions and degrees, and the multiplicity
of the witness points. 
Within any irreducible algebraic subset of parameter space, there may exist 
algebraic subsets of lower dimension where these integer features change again, creating a stratification of algebraic sets of successively greater specialization. In an early discussion of this phenomenon,
Kahan \cite{Kahan1972} called such sets \emph{pejorative manifolds},
but we prefer the more neutral term \emph{exceptional sets}.
Typically, an analyst would like to know that the problem they have 
posed is close to an exceptional set.  Moreover, since such sets have
zero measure in their containing parameter space, the fact that a posed problem 
falls quite close to an exceptional set may indicate that this is not a coincidence,
and in fact, the exceptional case 
is the true item~of~interest. Moreover, constraining the problem to lie on
an exceptional set  can convert an ill-conditioned problem into a
well-conditioned~one~\cite{Kahan1972}.

Factoring a single multivariate polynomial presents a special case of the phenomena we
presently address. Wu and Zeng \cite{wu2017numerical} note that factorization 
of such a polynomial is ill-posed when coefficient perturbations are considered
since the factors change discontinuously as the coefficients approach a 
factorization submanifold.
Their answer to this problem is to define a metric for the distance between 
polynomials and a partial ordering on the factorization structures. 
This partial ordering corresponds to algebraic set inclusion in the stratification of
factorization structures, and in the parlance of Wu and Zeng, each such set is said to be
more singular than any set that contains it. (Roughly, 
a polynomial with more factors is more singular than one with fewer, and for the same
degree, a factor appearing with multiplicity
is more singular that a product of distinct factors.) 
Wu and Zeng regularize the numerical factorization 
problem by requiring the user to provide an uncertainty ball around the given 
polynomial. Among all the possible factorization manifolds that intersect the uncertainty ball,
they define the numerical factorization to be the nearest polynomial on the exceptional set of highest
singularity. As the radius of the uncertainty ball grows, the numerical factorization may
change to one of higher singularity. So, while the problem remains ill-posed at 
critical radii, it has become well-posed everywhere else. 
In the case of a single polynomial of moderate degree, the number of possible specializations is small enough
that one can enumerate them all. This gives the potential of finding the unique numerical factorization
for most uncertainty radii, and a finite list of alternatives for any range of radii, 
functionalities provided by Wu and Zeng's software package.

For parameterized systems of polynomials, enumeration of the possible irreducible decomposition structures
is a formidable task, and we do not attempt it here. However, in the process of computing
a decomposition, there typically will be only a few places where the uncertainty in judging how to
classify points warrants exploration of the alternatives. The same can be said for more limited objectives,
such as computing only the isolated solutions of a system by homotopy, where one may question the number of solution 
paths deemed to have landed at infinity or observe a cluster of path endpoints that might indicate a single solution point of
higher multiplicity. 
If the uncertainty is due solely to 
floating point round-off and we have access to either 
a symbolic description 
or a refinable numerical description
of the parameter values, then the correct
judgement can be made with high confidence by increasing the precision of the computation. 
Such results might then be certified either by symbolic computations or by verifiable numerics,
such as Smale's alpha theory \cite{alphaTheory} (see also~\cite[Ch.~8]{BCSS})
or by techniques from interval analysis \cite{Moore79}.
Our concern in this article is for cases where there is inherent uncertainty in the parameters.
This may arise from empirical measurements of the parameters
or, perhaps, the parameters arise from prior
computations in finite precision. 
We assume that the questionable structural element has been identified
and our task is to find the nearest point 
in parameter space where the special structure~occurs.

After describing some robustness scenarios 
in Section~\ref{sec:Background}, 
Section~\ref{sec:Robust} provides a framework for robustness
in numerical algebraic geometry.  This framework is then 
applied to a variety of structures: fewer finite solutions
in Section~\ref{sec:FiniteRoots},
existence of higher-dimensional components in Section~\ref{sec:HigherDim},
components that further decompose into irreducible components
in Section~\ref{sec:Decompose}, and solution sets
of higher multiplicity in Section~\ref{sec:HigherMult}.
We formulate the algebraic conditions implied by each type of structure 
and use
local optimization techniques to find a nearby 
set of parameters satisfying them.
After treating each type of specialized structure and 
illustrating on a small example,
we show the effectiveness of the approach on three more substantial problems coming
from the kinematics of mechanisms and robots in Section~\ref{sec:Examples}.
A~short conclusion is provided in Section~\ref{sec:Conclusion}.
Files for the examples, which are all computed using 
{\tt Bertini} \cite{Bertini},
are available at \url{https://doi.org/10.7274/25328878}.

\section{Robustness scenarios}\label{sec:Background}

Before presenting our approach to robustifying numerical algebraic geometry,
we discuss scenarios where proximity to an exceptional set leads to ill-conditioning.

To abbreviate the discussion, we use the phrase ``with probability one'' as shorthand for
the more precise, and stronger, condition that exceptions are a proper algebraic
subset of the relevant parameter space, where the parameter space in question 
should be clear from context. Similarly, a point in 
parameter space is ``generic'' if it lies in the dense Zariski-open set that is
the complement of the set of exceptions. For example, the reference to ``a system of $D$
generic linear polynomials,'' $L:\Cc^n\rightarrow\Cc^D$, just after \eqref{eq:witnessSet} 
means a system of the form $Ax+b$ where matrix $[A\>\>b\,]$ has
been chosen from the dense Zariski-open subset of $\Cc^{D\times(n+1)}$ where
$V(L)$ intersects $X$ transversely. In numerical work, we make the assumption
that a random number generator suffices for choosing generic points.

Our discussion utilizes the concept of a \emph{fiber product} \cite{FiberProducts}. For algebraic sets
$A$ and $B$ with algebraic maps $\pi_A:A\rightarrow Y$ and $\pi_B:B\rightarrow Y$, the
fiber product of $A$ with $B$ over $Y$ is
\begin{equation}
    A \times_Y B = \{(a,b)\in A\times B \suchthat \pi_A(a)=\pi_B(b)\}.
\end{equation}
One may similarly form fiber products between three or more algebraic sets.
In this article, the maps involved in forming fiber products will all be
natural projections of the form $(x,p)\mapsto p$. Moreover, 
for polynomial systems $f,g:\Cc^n\times\Cc^m\rightarrow\Cc^k$, if $A$ and $B$ 
in $\Cc^n\times\Cc^m$ are
irreducible components of $V(f(x,p))$ and $V(g(x,p))$, respectively,
then the fiber product of $A$ with $B$ over $\Cc^m$ is an algebraic set in
$V(f(x_1,p_1),g(x_2,p_2),p_1-p_2)$, a so-called reduction to the diagonal
which is isomorphic to an algebraic set in
\mbox{$V(f(x_1,p),g(x_2,p))\subset \Cc^n\times\Cc^n\times\Cc^m$}. In this situation,
we refer to $\{f(x_1,p),g(x_2,p)\}$ as a fiber product system.
We note the convention used throughout is 
that $f(x,p)$ means that both $x$ and $p$
are considered as variables which is in contrast to $f(x;p)$
where $x$ are variables and $p$ are parameters.

\subsection{Multiplicity example} 
As a simple first example, consider solving $V(f)$ for the single polynomial  
$f=x^2+2\sqrt{2}x+2$, which has the factorization $(x+\sqrt2)^2$ and hence 
$V(f)$ is the single point $x=-\sqrt2$ of multiplicity 2. If instead of the
exact symbolic form of $f$, we are given an eight-digit version of it, say
\[
  \fapprox_8 = x^2 + 2.8284271x +2,
\]
the Matlab \verb|roots| command, operating in double precision, returns the two
roots
\[
x_8= -1.414213550000000 \pm 0.000187073241389\text{i}
\]
where $\text{i} = \sqrt{-1}$.
Using these roots as initial guesses for Newton's method, the solutions
of the sixteen-digit version of $f$,
\[
 \fapprox_{16} = x^2 + 2.828427124746190x +2,
\]
are computed in double precision as
\[
x_{16} =  -1.414213553589213 \pm 0.000000183520060\text{i}.
\]
One may work the problem in increasingly higher precision 
by considering a sequence of approximations $a_\ell$
of $2\sqrt{2}$ rounded off to $\ell$ digits.  For every $\ell$,
$\fapprox_\ell=x^2+a_\ell x+2$ has a pair of roots in the vicinity of $-\sqrt{2}$. 
A numerical package with adjustable precision
can refine~$\fapprox_\ell$ and a solution $x_\ell$ until 
$|x_\ell+\sqrt{2}|$ is smaller than any positive error 
tolerance one might set. 
Whether a computer program using this refinement process reports two isolated roots or one
double root depends on settings for its precision and tolerance.
If the same program is only given~$\fapprox_8$ or~$\fapprox_{16}$, the roots stay
distinct no matter what precision is used for~Newton's~method.

We can stabilize this situation by asking if there exists a nearby
polynomial with a double root. That is, we ask if there is a polynomial near to $\fapprox_8$
of the form $\hat f(x;p)=x^2+px+2$ with a root that also satisfies the derivative
$\hat f'(x;p)=2x+p$. Solving $V(\hat f(x,p),\hat f'(x,p))$ 
in~$\Cc^2$ with Newton's method 
and an initial guess taking $x=x_8$ and $p=a_8$, returns
\[
 (\hat x,\hat p)=(-1.414213562373095,2.828427124746190),
\]
where the imaginary parts have converged to zero within machine epsilon. Moreover,
the Jacobian matrix for this structured system is clearly nonsingular:
\[
  J\left[
\begin{array}{c}
     \hat f  \\
     \hat f' 
\end{array}
\right]
=
\left[
\begin{array}{cc}
     2x+p & x  \\
     2 & 1 
\end{array}
\right]
\approx
\left[
\begin{array}{cc}
     0 & -1.414213562373095  \\
     2 & 1 
\end{array}
\right].
\]
The good conditioning of this double-root problem not only 
leads to a quickly convergent, accurate answer, but also it could be
used to certify the answer via alpha theory \cite{alphaTheory} or interval analysis \cite{Moore79}.
The acceptance of the double root as the ``correct'' answer 
depends on whether $\hat p$ is within the tolerance
of the given data. After all, if the user really wants the roots
of $\fapprox_8$ as given, then $x_8$ is the better answer.  However, 
if the
coefficient on $x$ is acknowledged to be only known to eight digits,
then the double root $\hat x$ is the preferred answer. 

Suppose that we reformulate such 
that the parameter space has two entries, say
$$\tilde f(x;p)=x^2+p_1x+p_2,$$
then we may search for the system with a double root nearest to $\fapprox_8$ as
\[
  \min\|p-(2.8284271,2)\| \text{ subject to } (\tilde f(x,p),\tilde f'(x,p))=0.
\]
Using the Euclidean norm, the global optimum is attained at
the nonsingular point
\[
  (x,p_1,p_2) = (-1.414213558248730, \:  2.828427116497461, \:  1.999999988334534),
\]
which again shows that there is a polynomial $\tilde f$ near the given polynomial, $\fapprox_8$, such that $\tilde f$ has a double root.
This illustrates the flavor of the approach put forward in \cite{wu2017numerical}, although
they also treat multivariate polynomials, which have a richer set of
factorization structures than just multiplicity.

\subsection{Divergent solutions} 
In numerical algebraic geometry, one of the most common objectives is to find the isolated
solutions of a ``square'' polynomial system, say $f(x;p):\Cc^n\times\Cc^m\rightarrow\Cc^n$, at a given parameter point, say $p=p^*$. (Here, \emph{square} means the number of equations equals the number of variables.) A standard result of the field states that the number of isolated solutions is
constant for all $p$ in a nonempty open Zariski set in $\Cc^m$. In other words, the exceptions
are a proper algebraic subset of $\Cc^m$, say $P^*$. A key technique in the field uses this fact to build homotopies for finding all isolated points. In particular, if we have all isolated solutions, 
say~\mbox{$S_1\subset\Cc^n$}, of start system $f(x;p_1)$ for a generic set
of parameters, $p_1$, then the homotopy~$f(x;\phi(t))=0$ for a general enough continuous path $\phi(t):[0,1]\rightarrow\Cc^m$ with $\phi(1)=p_1$ and $\phi(0)=p_0$
defines $\#S_1$ continuous paths whose limits as
$t\rightarrow0$ include all isolated solutions of a target system $f(x;p_0)$,
e.g., see \cite{CoeffParam} or \cite[Thm.~7.1.1]{SW05}.
The conditions for a ``general enough'' path are very mild; in fact, $\phi(t)=tp_1+(1-t)p_0$ suffices with probability one when $p_1$ is chosen randomly, independent of $p_0$. Many \emph{ab initio} homotopies, which solve a system from scratch, fit into this mold. For example, polyhedral homotopies are formed by considering the family of systems having the
same monomials as the target system, so that the parameter space consists of the coefficients of the monomials
\cite{huber1995polyhedral,li1997numerical,verschelde1994homotopies}. After solving one $f(x;p_1)$ for generic $p_1$ by such a technique, one may proceed to solve any target system in the family by parameter homotopy.
If the target parameters are special, i.e., if $p_0\in P^*$, then $f(x;p_0)$ has fewer isolated solutions than $f(x;p_1)$
meaning that some solution paths of the homotopy either diverge to infinity
or some of the endpoints lie on a positive-dimensional solution component.  Diverging
to infinity can be handled by homogenizing $f$ and working on a projective space \cite{morgan1986transformation}.
Thus, paths that originally diverged to infinity are transformed to converge 
to a point with homogeneous coordinate equal to zero. 

When one executes a parameter homotopy using floating point arithmetic, the computed homogeneous coordinate of a divergent path is typically not exactly zero but rather some complex number near zero. This also occurs
when $p_0$ is slightly perturbed off of the special set $P^*$. Usually, one does not know the conditions that
define the algebraic set $P^*$, but even so, with a solution near infinity in hand, one may wonder how far $p_0$ is from $P^*$. Suppose that due to round-off or measurement error, $p_0$ is uncertain. Then it could be of high interest to know whether the closest point of $P^*$ is within the uncertainty ball centered on $p_0$.

It often happens that more than one endpoint of a homotopy falls near infinity. In such cases, it is of interest to find a nearby point in parameter
space where all those points land on infinity simultaneously. Let us assume that $f(x;p)$ has been homogenized so that $x$ has homogeneous coordinates 
$[x_0,x_1,\ldots,x_n]\in\Pp^n$, with $x_0=0$ being the hyperplane at infinity. To consider $j>1$ points simultaneously, we must introduce a 
double subscript notation, where point $x_i\in\Pp^n$ has coordinates $x_i=[x_{i0},\ldots,x_{in}]$. Point $x_i$ is a solution at infinity if it satisfies
the augmented system $F(x_i;p)=\{f(x_i;p),x_{i0}\}=0$, so sending $j>1$ points to infinity simultaneously requires
\begin{equation}\label{eq:fiberProduct}
    \{F(x_1;p),\ldots,F(x_j;p)\}=0,
\end{equation}
which is the $j^{\rm th}$ fiber product system for the projection $(x,p)\mapsto p$ \cite{FiberProducts}. We note that the isolated solutions to $f(x;p)$ are not
necessarily independent in the sense that imposing \eqref{eq:fiberProduct} for $j$ points may result in forcing more than $j$ endpoints to lie at infinity.

\subsection{Emergent solutions}
When a parameterized system has more equations than unknowns,  $f(x;p):\Cc^n\times\Cc^m\rightarrow\Cc^k$ with $k>n$, 
there may exist exceptional sets where the number of isolated solutions increases. A familiar example is a
linear system $Ax=b$ where full-rank matrix $A$ has more rows than columns. For most choices of $b$ in Euclidean space, the system is incompatible and has no solutions, but for $b$ lying in the column space of $A$, there will be a unique solution.
In the more general nonlinear case, one method for finding all isolated solutions is
to replace $f$ with a ``square'' randomization $R_n f:\Cc^n\times\Cc^m\rightarrow\Cc^n$ wherein
each of the $n$ polynomials of system $R_n f$ is a random linear combination of the polynomials
in $f$. Theorem~13.5.1 (item (2)) of \cite{SW05} implies that, with probability
one, the isolated points in $V(R_nf)$ include all the isolated points in $V(f)$.
After solving $R_n f$ by homotopy, one sorts solutions vs.\ nonsolutions by evaluating $f$ at each solution of $R_n f$.
If $V(R_n f)$ contains nonsolutions for generic parameters $p\in\Cc^m$, then there may exist an exceptional
set $P^*\subset\Cc^m$ where one or more of these satisfy $f$ to become solutions. 
Bertini's Theorem \cite[Thm~A.8.7]{SW05} tells us that the nonsolutions will be nonsingular with probability one.
However, if they 
emerge as singular solutions of $V(f)$ as $p\rightarrow P^*$, then they will be ill-conditioned near $P^*$.
Even if the nonsolutions remain
well-conditioned as solutions of $R_n f$, meaning that the Jacobian matrix
with respect to the variables
$J(R_n f)=R_n\cdot Jf$ is far from singular, 
the problem of solving $f$ for parameters in the vicinity of $P^*$ is ill-conditioned from the standpoint that
the number of solutions changes discontinuously as we approach $P^*$. The numerical difficulty arises in 
deciding whether or not $f(x;p)=0$ when the floating point evaluation of $f$ is near, but not exactly, zero.
For nonsingular emergent solutions, sensitivity analysis, e.g., singular value decomposition, of the full Jacobian matrix 
with respect to both the variables and the
parameters 
can estimate the distance in parameter space from the given parameters to $P^*$, while alpha theory or interval analysis 
can provide provable bounds. In the case of singular emergent solutions, multiplicity conditions will have to be
imposed as well (see below). 
In any case, the simultaneous emergence of $j$ solutions requires them to satisfy
the $j^{\rm th}$ fiber product system $\{f(x_1;p),\ldots,f(x_j;p)\}$.

\subsection{Sets of exceptional dimension} 
Polynomial systems often have solution sets of positive dimension. This happens by force if there are fewer
equations than unknowns, but it can happen more generally as well. Moreover, a polynomial system can have 
solution components at several different dimensions. For $x\in V(f)$, the \emph{local dimension} at $x$, denoted $\dim_x V(f)$,
is the highest dimension of all the solution components containing $x$. For a parameterized system with the natural projection $\pi(x,p) = p$,
the fiber above $p^*\in\Cc^m$ is $V(f(x;p^*))$ and the 
fiber dimension at point $(x^*,p^*)\in V(f(x,p))$
is $\dim_{x^*} V(f(x;p^*))$. Define $\sD_h$ as the closure of the set $\{(x,p)\in\Cc^N\times\Cc^m\suchthat \dim_x V(f(x;p))=h\}$,
which is an algebraic set. 
A parameterized polynomial system has a set of exceptional dimension wherever $\sD_H$ intersects $\sD_h$ for $H>h$, that is,
exceptions occur at parameter values $p^*\in\Cc^m$ where the fiber dimension increases. The sets $\pi(\sD_h)$ 
form a stratification of parameter space with each containment progressing to higher and higher fiber dimension.
Since the structure of the solution set changes every time there is a change in dimension, each such change is another example of ill-conditioning.
As presented in \cite{FiberProducts} and discussed below in Section~\ref{sec:HigherDim},
fiber products provide a way of finding exceptional sets.

In numerical algebraic geometry, sets of exceptional dimension can be understood as a case of emergent solutions.
Holding $p$ constant, a witness point set for a pure $D$-dimensional component of $V(f(x;p))$ is found by
intersecting with a codimension $D$ generic affine linear space, $L_D(x)$. For $D>n-k$, this is accomplished
by first computing the isolated solutions of the randomized system $\{R_{n-D}f(x;p),L_D(x)\}$, where $R_\ell f$ denotes~$\ell$ generic linear combinations of the polynomials in $f$ and $L_D(x)$ is a system of $D$ generic affine linear equations. When $p^*$ is on a set of exceptional dimension, nonsolutions emerge as solutions to~$f$ as $p\rightarrow p^*$. For a degree $d$ irreducible component to emerge, $d$ new witness points must emerge simultaneously, which leads
to a fiber product formulation of the same general form as \eqref{eq:fiberProduct}, with $F$ now defined as $F(x;p)=\{R_{n-D}f(x;p),L_D(x)\}$.
While the $d$ witness points of a degree $d$ component must satisfy the $d^{\rm th}$ fiber product,
it may happen that imposing the $j^{\rm th}$ fiber product for $j<d$ suffices. In particular, 
a different bound based on counting dimensions often comes into play first \cite{FiberProducts}.

\subsection{Exceptional decomposition}
Once one finds a witness set $\sW=\{f,L_D,W\}$ for a pure $D$-dimensional component $X$ of~$V(f)$, it is often of interest to decompose $X$ into its irreducible components, which are the 
closure of the connected components of $X$ after removing its singularities, $X\setminus X_{\rm sing}$. For a single polynomial, irreducible components correspond exactly with factors, so irreducible decomposition is the generalization of factorization to systems of polynomials. Ill-conditioning occurs near a point in parameter space where a component decomposes into more irreducible components than general points in the neighborhood. Again, we get a stratification of parameter space where components decompose more and more finely.

Every pure-dimensional algebraic set satisfies a linear trace condition, and irreducible components correspond with the smallest subsets of a witness point set $W$ that satisfy the trace test \cite{BHL,HR15,TraceLRS,SVWtraces}. Ill-conditioning occurs when a trace test for a proper subset $W_1\subsetneq W$ evaluates to approximately zero. We may then ask if there is a parameter point nearby where that test is exactly zero, indicating that $X$ decomposes, with $W_1$ representing a lower degree component. (The number of points in a witness set is equal to the degree of the algebraic set it represents.) 
As presented in Section~\ref{sec:Decompose}, since the trace
test involves all the points $W_1$ simultaneously, a kind of fiber product system ensues.

\subsection{Exceptional multiplicity}\label{sec:ExpMult}
Our opening example of a single polynomial with a double root generalizes to systems of polynomials. As we approach
a subset of parameter space where witness points merge, the components they represent coincide, forming a component of higher multiplicity.  
When we speak of the multiplicity of an irreducible component, we mean the multiplicity of its witness points cut out by a generic slice.
However, when randomization is used to find witness points of $V(f)$, $f(x):\Cc^n\rightarrow\Cc^k$ at dimensions $D>n-k$, the multiplicity of
a witness point as a solution of $\{R_{n-D}f(x),L_D(x)\}$ may be greater than its multiplicity as a solution of~$\{f,L_D(x)\}$, with equality only 
guaranteed for either multiplicity 1, that is, for nonsingular points
or when the multiplicity with respect to 
the randomized system is~$2$~\mbox{\cite[Thm.~13.5.1]{SW05}}. Section~\ref{sec:HigherMult} discusses in more detail how multiplicity is
defined in terms of Macaulay matrices
and local Hilbert functions. For the moment, it suffices to say that the Macaulay matrix evaluated at a generic point of an irreducible algebraic set 
reveals the set's local Hilbert function and multiplicity, and provides an algebraic condition for it. As such, we again get a stratification of parameter space associated with changing the local Hilbert function
and increasing the multiplicity.

Since every generic point of an irreducible algebraic set has the same multiplicity, the conditions necessary to set the multiplicity of a component
may be asserted for several witness points simultaneously. As in the previous cases, asserting an algebraic condition for several points simultaneously is
a form of fiber product.

\subsection{Summary}
Each case discussed above---divergent solutions, emergent solutions, exceptional dimension, exceptional decomposition, and exceptional multiplicity---can lead
to a kind of ill-conditioning wherein small changes in parameters result in a discrete change in an integer property of the solution set.
Given a parameterized polynomial system $f(x;p)$ along with parameters near such a discontinuity, one may 
consider variations in the parameters and pose the question of finding the nearest point in parameter space where
the exceptional condition occurs. In each case, imposing the exceptional condition on several points simultaneously results in a fiber product
system. In particular, when the exceptional condition applies to an irreducible component of degree $d>1$, 
it automatically applies to a set of $d$ witness points, and consequently, fiber products are key to robustifying numerical irreducible decomposition.

\section{Robustness framework}\label{sec:Robust}

As described in Section~\ref{sec:Background}, 
the key tool for robust numerical algebraic geometry
is {\em fiber products}~\cite{FiberProducts}.
In order to motivate the notation, 
we first consider a simple example of 
$f(x;p) = p_1x + p_2$.  Of course, for general $p\in\Cc^2$, 
$V(f(x;p))$ consists of a single point, namely $x=-p_2/p_1$.  
Suppose that one aims to compute a parameter point $p$
so that $V(f(x;p)) = \Cc$,
which we trivially know for this problem is simply $p=0$.
From the generic behavior of $f$, 
one knows that if $V(f(x;p))$ contains at least two distinct
points, then it must contain all of $\Cc$.  
Thus, for generic $c_1,c_2\in\Cc$, this can be formulated as 
the fiber product system
\begin{equation}\label{eq:SimpleFiberProduct}
\sF(x_1,x_2,p) = \left[\begin{array}{c} p_1 x_1 + p_2 \\ x_1 - c_1 \\ \hline 
p_1 x_2 + p_2 \\ x_2 - c_2 \end{array}\right] = 0.
\end{equation}
In $\sF$, the original parameters $p$ are variables 
and the original variables are copied twice to 
correspond with the two different
solutions.  For the projection map $\pi(x_1,x_2,p) = p$
onto the original parameters, $\pi(V(\sF)) = \{0\}$,
which, substituting back into the original $f$
shows that $V(f(x;0))=\Cc$ as requested.  

The basic idea is that each component system of the fiber product imposes a
condition on the parameters.  In \eqref{eq:SimpleFiberProduct}, 
the first component system cuts the original parameter space, a plane,
down to a line.  The second component system then cuts the line down to a point.
Any additional component systems would not reduce the dimension further
as the fiber over $p=0$ is $\Cc$.  The following formalizes this behavior.
To allow flexibility, we allow for auxiliary variables 
and constants arising from randomizations and slicing
to be included in each component
system.

\begin{theorem}\label{thm:BasicFiberProduct}
Suppose that $f(x;p)$ is a polynomial system and
$A\subset V(f(x,p))\subset\Cc^n\times\Cc^m$ is an irreducible algebraic set. 
For auxiliary variables $y$ and 
constants $c$ used for randomization and slicing, 
let $F_c(x,y,p)$ be a polynomial system which imposes a condition on
the parameter space when $(x,p)\in A$.  
For $a\geq0$ and generic constants $c_1,\dots,c_a$,
consider the fiber product system $\sF_a$ and projection map $\pi_a$ with
\begin{equation}\label{eq:FiberProdSystem}
\sF_a(x_1,\dots,x_a,y_1,\dots,y_a,p) = \left[\begin{array}{c} F_{c_1}(x_1,y_1,p) \\ \vdots \\ F_{c_a}(x_a,y_a,p)
\end{array}\right]
\hbox{~~~~and~~~~}
\pi_a(x_1,\dots,x_a,y_1,\dots,y_a,p) = p.
\end{equation}
Let $\Delta_a\subset V(\sF_a)$ be an algebraic set of components to ignore
such that there is a natural inclusion of $\Delta_a$ into $\Delta_{a'}$ for all $a<a'$.
Let $V_A(\sF_a)$ denote the solution set upon the restriction that $(x_j,p)\in A$
for $j=1,\dots,a$.  Then, exactly one of the following must hold:
\begin{enumerate}
    \item\label{Item:1basic} $\dim \overline{\pi_a(V_A(\sF_a)\setminus \Delta_a)} > \dim \overline{\pi_{a+1}(V_A(\sF_{a+1})\setminus\Delta_{a+1})}$ or
    \item\label{Item:2basic} $\dim \overline{\pi_a(V_A(\sF_a)\setminus\Delta_a)} = \dim \overline{\pi_{j}(V_A(\sF_j)\setminus\Delta_j)}$
for all $j\geq a$.
\end{enumerate}
\end{theorem}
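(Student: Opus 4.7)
The plan is to show two properties of the non-negative integer sequence $d_a := \dim B_a$, where $B_a := \overline{\pi_a(V_A(\sF_a)\setminus\Delta_a)}$. First, monotonicity: $d_{a+1}\leq d_a$. Second, stability propagation: if $d_{a+1}=d_a$, then $d_j=d_a$ for every $j\geq a$. These two facts together force exactly one of the stated alternatives at step $a$, since alternative (1) is precisely the strict-drop case and alternative (2) is precisely the eventual-stability case.

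For monotonicity, introduce the forgetful map $\Phi:V_A(\sF_{a+1})\to V_A(\sF_a)$ dropping the last block $(x_{a+1},y_{a+1})$. Because the first $a$ blocks of $\sF_{a+1}$ coincide with $\sF_a$ and the constraint $(x_j,p)\in A$ is imposed block-wise, $\Phi$ is well-defined and satisfies $\pi_a\circ\Phi=\pi_{a+1}$. The hypothesis that there is a natural inclusion $\Delta_a\hookrightarrow\Delta_{a+1}$ amounts, in the useful direction, to $\Phi^{-1}(\Delta_a)\subseteq\Delta_{a+1}$, so $\Phi$ restricts to a map $V_A(\sF_{a+1})\setminus\Delta_{a+1}\to V_A(\sF_a)\setminus\Delta_a$. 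Passing to $\pi$-images and taking closures yields $B_{a+1}\subseteq B_a$, hence $d_{a+1}\leq d_a$.

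For stability propagation, assume $d_{a+1}=d_a$ and pick a top-dimensional irreducible component $C$ of $B_{a+1}$. Since $B_{a+1}\subseteq B_a$ and $C$ is irreducible with $\dim C=d_a=\dim B_a$, $C$ is in fact one of the top-dimensional components of $B_a$. I will show $C\subseteq B_j$ for every $j\geq a$; combined with monotonicity this gives $d_j=d_a$. For a generic $p\in C$, the definition of $B_{a+1}$ furnishes a point $(x_1^\ast,y_1^\ast,\dots,x_{a+1}^\ast,y_{a+1}^\ast)$ in the fiber of $V_A(\sF_{a+1})\setminus\Delta_{a+1}$ above $p$. Iterated monotonicity gives $p\in B_1$, so for each $k=1,2,\dots$ the fiber $\{(x,y):(x,p)\in A,\ F_{c_{a+1+k}}(x,y,p)=0\}$ is nonempty. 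Because $c_{a+1+k}$ was chosen generically and independently of the finitely much data fixed so far, one can pick $(x_{a+1+k}^\ast,y_{a+1+k}^\ast)$ in this fiber so that the enlarged tuple remains outside $\Delta_{a+1+k}$. Inductively, $p\in B_j$ for all $j\geq a+1$, and by density of generic points in $C$ we obtain $C\subseteq B_j$.

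The main obstacle is the claim that the new block can always be chosen to escape $\Delta_{a+1+k}$. The additional ``diagonal'' relations that $\Delta_{a+1+k}$ imposes beyond $\Delta_{a+k}$ cut out a proper algebraic subset of the fiber of admissible new blocks, and avoiding this subset is a generic-choice problem; making it rigorous uses the natural-inclusion hypothesis, which guarantees that the newly introduced components of $\Delta_{a+1+k}$ are cut by equations not identically vanishing on the generic fiber. A cleaner, uniform alternative is to regard all the $c_j$'s as indeterminates over $\Cc^m$, form the universal family, and invoke upper-semicontinuity of fiber dimension so that the relevant fiber of admissible extensions is generically of the expected (nonempty) dimension, bypassing a case analysis of the diagonal structure.
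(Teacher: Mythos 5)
Your proof follows the same two-step skeleton as the paper's: first monotonicity of the image dimensions $d_a$, then propagation of stability once one step fails to drop the dimension. The paper's own argument is essentially these two claims stated in a single paragraph (``the latter contains the same conditions as the former'' for monotonicity, and ``by genericity, this must be true for any additional generic system'' for propagation); you have unpacked both steps, correctly pinpointing the one genuine subtlety—that the newly appended block must be chosen off $\Delta_{a+1+k}$—which the paper folds silently into its genericity appeal, and your proposed universal-family/upper-semicontinuity workaround is a reasonable way to discharge it. One small caution: your monotonicity step needs the reading $\Phi^{-1}(\Delta_a)\subseteq\Delta_{a+1}$, which holds when $\Delta_a$ is the union of pairwise diagonals (the intended use) but would fail if ``$(x_j,y_j)=(x_{j'},y_{j'})$ for $j\neq j'$'' in the paper's remark were read as the thin diagonal; worth flagging, but it does not change the substance.
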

\begin{proof}
Clearly, $\dim \overline{\pi_k(V_A(\sF_a)\setminus\Delta_a)} \geq \dim \overline{\pi_{a+1}(V_A(\sF_{a+1})\setminus\Delta_{a+1})}$ as the latter contains the same conditions as the former.
So, if Item~\ref{Item:1basic} does not hold, then we must have that
\hbox{$\dim \overline{\pi_a(V_A(\sF_a)\setminus\Delta_a)} = \dim \overline{\pi_{a+1}(V_A(\sF_{a+1})\setminus\Delta_{a+1})}$}.
Thus, the system $F_{c_{a+1}}$ did not cause the parameter space to drop in dimension.
By genericity, this must be true for any additional generic system and thus
Item~\ref{Item:2basic} holds.
\end{proof}

\begin{remark}
A common example for $A$ is to ignore base points as illustrated in Section~\ref{ex:Infinity}.
A common example for $\Delta_a$ to ignore are diagonal components,
e.g. 
$$\{(x_1,\dots,x_a,y_1,\dots,y_a,p)~|~(x_j,y_j) = (x_{j'},y_{j'})
\hbox{~for~} j \neq j'\}.$$
\end{remark}

The idea of Thm.~\ref{thm:BasicFiberProduct} is to keep imposing the same
condition until the dimension stabilizes.  However, one may want
to impose various conditions, such as conditions on different 
dimensions of the solution set.  This can be accomplished by stacking such fiber product systems.
The only difference is that the number of component systems used to stabilize the dimension
from the generic parameter space may be smaller on proper algebraic subsets of the parameter space.
An example of this is presented in Section~\ref{sec:6R}.

\begin{corollary}\label{Cor:FiberProduct}
With the basic setup from Thm.~\ref{thm:BasicFiberProduct}, suppose
that one aims to impose $r$ conditions on the parameter space,
where, for $b=1,\dots,r$ and $a\geq 0$, each $\sF^b_a$ is as in \eqref{eq:FiberProdSystem}.
Then, there exists $a_1,\dots,a_r\geq 0$ such that the dimension of the 
closure of the image of the solution set of
$$\sF = \left[\begin{array}{c} \sF^1_{a_1}(x_1,p) \\ \vdots \\ \sF^r_{a_r}(x_r,p) \end{array}\right]$$
after consistently removing components to ignore onto the original parameters $p$ stabilizes.  
\end{corollary}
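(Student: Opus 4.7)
The plan is to apply Theorem~\ref{thm:BasicFiberProduct} inductively on the number of conditions $r$, exploiting the fact that the image dimension
\[d(a_1,\dots,a_r) = \dim\overline{\pi(V_A(\sF)\setminus\Delta)}\]
is a monotonically non-increasing, nonnegative integer-valued function of the stacking parameters.

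First I would establish monotonicity: adding another generic copy to any single block $\sF^b_{a_b}$ introduces additional equations in fresh variables while leaving the projection onto $p$ unchanged, and the consistency requirement on the components to ignore guarantees that any point outside $\Delta$ at level $a_b+1$ truncates to a point outside $\Delta$ at level $a_b$. Hence the projected image (and its Zariski closure) can only shrink with each new copy, so $d$ is non-increasing in each coordinate. Since $d$ takes values in $\{0,1,\dots,m\}$, its infimum $d_\infty$ is attained at some finite $(a_1^*,\dots,a_r^*)\in\Nn^r$, and by componentwise monotonicity $d(a)=d_\infty$ for every $a\geq(a_1^*,\dots,a_r^*)$. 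This is the desired stabilization.

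A more constructive route is induction on $r$: the base case $r=1$ is Theorem~\ref{thm:BasicFiberProduct} itself. For the inductive step, apply Theorem~\ref{thm:BasicFiberProduct} to $\sF^1$ inside $A$ to obtain $a_1^*$ stabilizing condition~1; then take the stabilized closed image as the new ambient set and invoke the inductive hypothesis on the remaining $r-1$ conditions to obtain $a_2^*,\dots,a_r^*$. The main obstacle with this sequential approach is verifying joint stabilization: if one later raises $a_1$ past $a_1^*$ after $\sF^2,\dots,\sF^r$ have been imposed, the dimension must not drop further. This requires the stabilization in Theorem~\ref{thm:BasicFiberProduct} to persist through intersection with further generically chosen subvarieties, which in turn relies on the independent genericity of the constants $c_i$ in each block and the ``saturation'' implicit in the proof of Theorem~\ref{thm:BasicFiberProduct}. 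The cleanest way to sidestep this subtlety is the monotonicity-and-infimum argument of the previous paragraph, which needs only that $d$ be a monotone integer-valued function on $\Nn^r$.
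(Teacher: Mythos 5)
The paper states this corollary without a proof, treating it as immediate from Theorem~\ref{thm:BasicFiberProduct}, so there is no paper argument to compare against; but your monotonicity-and-infimum argument is correct and complete. Componentwise monotonicity of the image dimension $d$ follows from the same observation used in the theorem's proof (each new generic copy in a block only adds equations, and the consistency condition on the ignored components ensures nothing is re-admitted under truncation), and a non-increasing, $\{0,\dots,m\}$-valued function on $\Nn^r$ attains its infimum at some finite tuple $(a_1^*,\dots,a_r^*)$, past which monotonicity forces constancy. You are also right to distrust the naive sequential induction---the persistence concern you raise is a genuine subtlety---and the infimum argument sidesteps it cleanly. One thing worth flagging: your argument uses only monotonicity, not the genericity dichotomy that is the real content of Theorem~\ref{thm:BasicFiberProduct}. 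That dichotomy (Item~\ref{Item:2basic}) is what makes the result algorithmically usable, as exploited in Remark~\ref{remark:Lemma3Dimension}: it certifies stabilization as soon as the dimension fails to drop, whereas pure monotonicity guarantees the existence of a stabilizing tuple but provides no stopping rule. For the literal existence claim in the corollary, however, your proof is all that is needed.
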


\begin{remark}\label{remark:Lemma3Dimension}
In the multiplicity one case, Lemma~3 of \cite{WitnessProjection} provides a 
local linear algebra approach to compute the dimension of the image
from a general point on the component.  
From an approximation, one can utilize a numerical rank revealing method 
such as the singular value decomposition.  When all else fails,
one could use a guess and check method to determine if the fiber product
system described the proper parameter space.
\end{remark}

Once one has a properly constructed fiber product system $\sF$,
the next step is to recover a parameter value $p^*$ nearby $\hat{p}$,
where $\hat{p}$ is an approximation to an initial parameter value~$\tilde{p}$,
such that $p^*$ and $\tilde{p}$ are exceptional parameter values
lying in the projection of the solution set $V(\sF)$ onto the parameter space.
Here, one must choose a notion of ``nearby,'' 
such as the standard Euclidean distance or an alternative 
based on knowledge about uncertainty in~$\hat{p}$. 
Over the complex numbers, one may utilize
isotropic coordinates~\cite{Isotropic} so that the square of the 
Euclidean distance corresponds with a bilinear polynomial. 
To keep notation simple, we write this as the local optimization problem
\begin{equation}\label{eq:Optimization}
p^* = \arg \min \|p-\hat{p}\| \hbox{~such that~} \sF(x,p) = 0.
\end{equation}
Although there are many local optimization methods and distance metrics,
all examples below use the square of the standard Euclidean distance with
a gradient descent homotopy~\cite{GradDescent}.
In such cases, $\sF$ is constructed to be a well-constrained system 
and we aim to compute a nearby critical point of \eqref{eq:Optimization}
using a homogenized version of Lagrange multipliers:
$$\sG(x,p,\lambda) = \left[\begin{array}{c} \sF(x,p) \\
\lambda_0 \nabla(\|p-\hat{p}\|_2^2) + \sum_{j=1}^M \lambda_j \nabla(\sF_j) 
\end{array}\right]
$$
where $\nabla(q)$ is the gradient of $q$ and $\lambda\in\Pp^M$.
If $\hat{x}$ such that $\sF(\hat{x},\hat{p})\approx 0$, then the gradient descent homotopy
is simply
\begin{equation}\label{eq:GradDescentHomotopy}
\sH(x,p,\lambda,t) = \left[\begin{array}{c} \sF(x,p) - t \sF(\hat{x},\hat{p}) \\
\lambda_0 \nabla(\|p-\hat{p}\|_2^2) + \sum_{j=1}^M \lambda_j \nabla(\sF_j) 
\end{array}\right]
\end{equation}
where the starting point at $t=1$ is $(\hat{x},\hat{p},[1,0\dots,0])$.  
Note that such a gradient descent homotopy is local in that it may not 
work in cases such as when the perturbation is too large
or a ``nearby'' component did not actually exist with the given
formulation.  In such cases, one may need to consider
alternative formulations, e.g., isotropic coordinates,
as well as consider alternative local optimization methods.

\section{Projective space and solutions at infinity}\label{sec:FiniteRoots}

The first structure to consider applying this robust framework to is
to compute parameter values which have fewer finite solutions.  

\subsection{Solutions at infinity}\label{sec:SolAtInf}

For a parameterized polynomial system $f(x;p)$, one can consider
solutions at infinity by considering a homogenization (or multihomogenization) 
of $f$ with variables in projective space (or product of projective spaces).
Thus, solutions at infinity correspond with a homogenizing variable being equal to $0$.
For simplicity, suppose that we have replaced $f$ with a homogenized version
together with an affine linear patch to perform computations in affine space.
For a single condition in the spirit of Thm.~\ref{thm:BasicFiberProduct},
suppose that we are interested in reducing the number of finite solutions by
forcing solutions to be inside of the hyperplane at infinity defined by $x_0=0$.
This yields the following.

\begin{corollary}\label{co:SolAtInf}
With the setup described above, Thm.~\ref{thm:BasicFiberProduct} holds 
when applied to 
$$F(x,p) = \left[\begin{array}{c} f(x;p) \\ x_0 \end{array}\right].$$
In particular, this component system has no random constants nor auxiliary variables.
\end{corollary}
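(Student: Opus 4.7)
The plan is to cast this as a direct specialization of Theorem~\ref{thm:BasicFiberProduct}, verifying that the chosen augmented system satisfies the hypotheses and then invoking the theorem verbatim. First I would identify the concrete instantiation of the general framework: here the auxiliary variable tuple $y$ is empty, the tuple of random constants $c$ is empty, and the candidate system is $F(x,p) = [f(x;p);\, x_0]$, whose only role beyond $f$ itself is to assert that the point lies on the hyperplane at infinity $\{x_0 = 0\}$ in the homogenization/patch that was fixed at the start of Section~\ref{sec:SolAtInf}.

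The only substantive check is that $F$ really ``imposes a condition on the parameter space when $(x,p)\in A$,'' in the sense used by Theorem~\ref{thm:BasicFiberProduct}; concretely, that $\overline{\pi_1(V_A(F)\setminus\Delta_1)}$ is a proper closed subset of $\overline{\pi(A)}$. I would prove this by a genericity argument on the chosen homogenization: after fixing a generic affine linear patch, the set of parameters $p$ for which the fiber of $A$ over $p$ contains a point with $x_0 = 0$ is cut out by the projection of the proper closed subset $\{x_0=0\}\cap A$, and hence is contained in a proper algebraic subset of $\overline{\pi(A)}$. Any ``phantom'' solutions at infinity that persist for all $p$ (base points of the homogenized family) are absorbed into the ignored set $\Delta_1$, exactly as anticipated in the remark following Theorem~\ref{thm:BasicFiberProduct}; this is the main bookkeeping step and is the one place where care is needed.

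With this verification in hand, the dichotomy in Theorem~\ref{thm:BasicFiberProduct} applies to $F$ without modification: either adjoining an extra copy of $F$ (on a fresh variable tuple $x_j$ sharing $p$) strictly drops the closure of the projection onto parameter space, or the dimension has already stabilized and no further copy changes it. In particular, because $F$ carries no random constants, the fiber product systems $\sF_a$ produced from $F$ coincide with the naive diagonal iteration; the genericity hypothesis from the general theorem is therefore automatic and needs no additional random data.

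The hard part, such as it is, is purely conceptual rather than technical: clearly delineating what counts as a ``solution at infinity'' on the chosen patch, and ensuring that base-point phenomena, which would otherwise make $x_0 = 0$ vacuous on $A$ and invalidate the ``imposes a condition'' hypothesis, are segregated into $\Delta_a$. Once this is pinned down, no new argument beyond Theorem~\ref{thm:BasicFiberProduct} is required, and the corollary follows.
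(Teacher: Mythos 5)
Your proposal is correct and matches the paper's (implicit) treatment: the corollary is stated without a separate proof precisely because it is the immediate specialization of Theorem~\ref{thm:BasicFiberProduct} to $F=[f;\,x_0]$ with empty $y$ and $c$, and the only substantive check is that $x_0=0$ genuinely imposes a condition on $\pi(A)$. One small deviation worth flagging: you propose absorbing base points at infinity into the ignored set $\Delta_1$, whereas the paper's Remark after Theorem~\ref{thm:BasicFiberProduct} (and the worked example in Section~\ref{ex:Infinity}) instead excludes base points by the choice of the irreducible set $A$ itself, reserving $\Delta_a$ for diagonal-type components in the fiber product. Both mechanisms are available in the theorem and either suffices here, but the paper's convention keeps $A$ irreducible and avoids having to describe the base-point locus inside every $V(\sF_a)$; it is the cleaner bookkeeping choice when the base point is a genuine component of $V(f(x,p))$ independent of $p$.
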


In the multiprojective setting, Cor.~\ref{Cor:FiberProduct} would apply to 
having solutions in different hyperplanes at infinity, e.g., see Section~\ref{sec:6R}.

For perturbed parameter values $\hat{p}$, one is looking 
for solutions to $f(x;\hat{p})=0$ for which a homogenizing coordinate is close to $0$.
If there are $s$ such points, then the number of component systems 
forming the fiber product is at most $s$ but could be strictly smaller
than~$s$ due to relations amongst the solutions. 

\subsection{Illustrative example}\label{ex:Infinity}

Consider the parameterized family of polynomial systems
\begin{equation}\label{eq:Infinity}
    f(x;p) = \left[\begin{array}{c}
    x_1^2 + p_1 x_1 + p_2 \\
    (x_1 +p_3) x_2 + 2 x_1 - 3
    \end{array}\right].
\end{equation}
For generic $p\in\Cc^3$, $f(x;p) = 0$ has two finite solutions. 
However, for the parameter values $\tilde{p}= (-2.3716, 0.98608803, -0.5377)$, taken as exact, $f$ has only one finite solution.  The reason for this reduction is that, for these parameter values, one of the two roots of the first polynomial happens to be $x_1=-\tilde{p}_3$, at which value
the second polynomial evaluates to $0x_2+2\tilde{p}_3-3\ne0$.
To demonstrate the robustness framework, we consider
starting with a perturbed parameter value $\hat{p}$
obtained from adding to $\tilde{p}$ an error in each coordinate
drawn from a Gaussian distribution with mean $0$ and standard deviation $0.01$, denoted $\mathcal{N}(0,0.01^2)$,
namely 
$\hat{p} = (-2.3728, 0.9607, -0.5349)$ to 4 decimal places.
If $\hat{p}$ is treated as exact, then 
$f(x;\hat{p})=0$ has two finite solutions
where one solution has large magnitude.  
So, we aim to recover $p^*$ near~$\hat{p}$ with one finite solution by pushing the large magnitude solution to~infinity.

The first step is to create a homogenization of $f$ in \eqref{eq:Infinity} together
with a generic affine patch.  Using a single homogenizing coordinate, say $x_0$, this yields
\begin{equation}\label{eq:InfinityHom}
    f(x;p) = \left[\begin{array}{c}
    x_1^2 + p_1 x_0 x_1  + p_2 x_0^2\\
    x_1 x_2 + 2 x_0 x_1 + p_3 x_0 x_2 - 3 x_0^2\\
    c_0 x_0 + c_1 x_1 + c_2 x_2 - 1
    \end{array}\right]
\end{equation}
where $c\in\Cc^3$ is chosen randomly.  
Note that, for every $p\in\Cc^3$, $x=(0,0,1/c_2)$ is a solution at infinity
so that we take $A = \overline{V(f)\setminus \left(\{(0,0,1/c_2)\}\times\Cc^3\right)}$
which is irreducible and consistent with Thm.~\ref{thm:BasicFiberProduct}.
For $\hat{p}$, numerical approximations of the solutions
are shown in Table~\ref{table:InfinityPoints}
with the first solution being the aforementioned point that is ignored.
The second solution listed has $x_0$ near $0$
and, thus, we aim to adjust the parameters so that 
it also lies on the hyperplane
at infinity defined by~$x_0=0$.
\begin{table}[ht]
\caption{Solutions to~\eqref{eq:InfinityHom} for $\hat{p}$ where $\text{i} = \sqrt{-1}$}
\label{table:InfinityPoints}
\centering
    \begin{tabular}[2in]{c r c l c r c l c r c l}
    \toprule
    Solution & \multicolumn{3}{c}{$x_0$} && \multicolumn{3}{c}{$x_1$} && \multicolumn{3}{c}{$x_2$} \\
    \midrule
    1 & 0.0000 & + & 0.0000i && 0.0000 &+& 0.0000i && --0.2235 &+& 0.8253i \\
    2 & 0.0020 & -- & 0.0072i && 0.0010 &--& 0.0037i && --0.2263 &+& 0.8289i \\
    3 & --0.0368 & + & 2.7475i && --0.0682 &+&5.0964i && 0.0198 &--& 1.4776i \\
    \bottomrule
    \end{tabular}
\end{table}

Since there is only a single solution to push to infinity, Cor.~\ref{co:SolAtInf} 
yields
\begin{equation}\label{eq:InfinityFiberSystem}
    \sF = \left[\begin{array}{c}
    x_1^2 + p_1 x_0 x_1  + p_2 x_0^2\\
    x_1 x_2 + 2 x_0 x_1 + p_3 x_0 x_2 - 3 x_0^2\\
    c_0 x_0 + c_1 x_1 + c_2 x_2 - 1 \\
    x_0
    \end{array}\right].
\end{equation}
Since there is only a single component system, there are no
other components to ignore so we take $\Delta = \emptyset$.
For illustration purposes, one can easily verify 
that the closure of the image of the projection onto $p\in\Cc^3$ 
of $V_A(\sF)$ is $V(p_3^2-p_1p_3+p_2)$.  
Such a defining equation can be determined using
symbolic computation, e.g., via Grobner bases,
or exactness recovery methods from numerical 
values, e.g., \cite{ExactnessRecovery}.

The critical point system constructed using homogenized Lagrange multipliers yields
\begin{equation}\label{eq:InfinityHomotopy}
    \mathcal{G} = \left[\begin{array}{c}
    x_1^2 + p_1 x_0 x_1  + p_2 x_0^2\\
    x_1 x_2 + 2 x_0 x_1 + p_3 x_0 x_2 - 3 x_0^2\\
    c_0 x_0 + c_1 x_1 + c_2 x_2 - 1 \\
    x_0\\
    \lambda_1(p_1 x_1 + 2 p_2 x_0) + \lambda_2(2x_1 + p_3x_2 - 6x_0) + \lambda_3 c_0 + \lambda_4\\
    \lambda_1 (2 x_1 + p_1 x_0) + \lambda_2 (x_2 + 2 x_0) + \lambda_3 c_1\\
    \lambda_2 (x_1 + p_3 x_0) + \lambda_3 c_2\\
    \lambda_0(p_1 - \hat{p}_1) + \lambda_1 x_0 x_1\\
    \lambda_0(p_2 - \hat{p}_2) + \lambda_1 x_0^2\\
    \lambda_0(p_3 - \hat{p}_3) + \lambda_2 x_0 x_2\\
    \end{array}\right].
\end{equation}
Taking the second solution in Table~\ref{table:InfinityPoints} as $\hat{x}$,
a gradient descent homotopy \eqref{eq:GradDescentHomotopy} recovers 
a nearby parameter value having the desired structure of only one finite solution,
which is provided in Table~\ref{table:InfinityParameters} to 8 decimal places.
The exceptional set is two-dimensional, so we do not expect to recover $\tilde{p}$ exactly,
just a point nearby consistent with the variance of the distribution of $\hat{p}$ around $\tilde{p}$.

\begin{table}[ht]
\caption{Initial (exact), perturbed (8 decimals), and recovered (8 decimals) parameter values}
\label{table:InfinityParameters}
\centering
    \begin{tabular}[2in]{cccc}
    \toprule
    Parameter & Initial ($\tilde{p}$) & Perturbed ($\hat{p}$) & Recovered ($p^*$) \\
    \midrule
    $p_1$ & --2.37160000  & --2.37284227  & --2.36891717  \\
    $p_2$ & \,\,\,0.98608803  & \,\,\,0.96067280  & \,\,\,0.96814820 \\
    $p_3$ & --0.53770000  & --0.53492792  & --0.52506952  \\
    \bottomrule
    \end{tabular}
\end{table}

Since the solution at infinity is singular, Remark~\ref{remark:Lemma3Dimension}
does not apply for computing dimensions using linear algebra.  
However, if we instead utilize a 2-homogeneous formulation, 
there are no base points.  Moreover, Remark~\ref{remark:Lemma3Dimension}
applies due to the nonsingularity.  
In particular, using two homogenizing coordinates, say $x_0$ and $x_3$, this yields
\begin{equation}\label{eq:Infinity2Hom}
    f(x;p) = \left[\begin{array}{c}
    x_1^2 + p_1 x_1 x_0 + p_2 x_0^2\\
    x_1 x_2 + 2 x_1 x_3 + p_3 x_0 x_2  - 3 x_0 x_3 \\
    \Box x_0 + \Box x_1 - 1\\
    \Box x_2 + \Box x_3 - 1\\
    \end{array}\right]
\end{equation}
where $\Box$ represents a random complex number.  Forming $\sF=\{f,x_3\}$, the
gradient descent homotopy leads to the same results 
as in Table~\ref{table:InfinityParameters}.

\section{Witness points and randomization}\label{sec:HigherDim}

The next structure to consider applying this robust framework to is
to compute parameter values which have solution components of various dimensions.

\subsection{Witness points}\label{sec:WitnessPoints}

As described in Section~\ref{sec:Background}, pure-dimensional solution components 
can be described by witness sets.  A key decision in numerical algebraic geometry,
such as part of a dimension-by-dimension algorithm
for computing a numerical irreducible decomposition, e.g., \cite{RegenCascade,Cascade,NumAlgGeom},
is to determine if a solution to a randomized system $Rf$ is actually a solution
to the original system $f$.  For exact systems, this can be determined
robustly by using the randomized system to refine the point and 
evaluate the original system using higher precision.  
Moreover, in the exact case, a nonsolution, i.e., 
a point satisfying $Rf=0$ and $f\neq0$, can be certifiably determined \cite{alphaCertified}.
This becomes a precarious task for systems with error.

Since discussions about multiplicity are provided later in Section~\ref{sec:HigherMult},
consider here that one is aiming for a pure $D$-dimensional component 
of degree $d$ where each irreducible component has multiplicity $1$
with respect to $f$.
This corresponds with a fiber product where
the component systems have $d$ solutions along various linear spaces
as summarized in~the~following.

\begin{corollary}\label{co:WitnessPoints}
With the setup described above, Thm.~\ref{thm:BasicFiberProduct} holds when applied to
$$F_c(x_1,\dots,x_d,p) = \left[\begin{array}{c} f(x_1;p) \\ L_c(x_1) 
\\ \vdots \\ f(x_d;p) \\ L_c(x_d) \end{array}\right]$$
where $c$ contains the coefficients of $L_c:\Cc^n\mapsto\Cc^D$.
\end{corollary}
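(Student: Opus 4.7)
The plan is to verify that the system $F_c$ described in the corollary fits the hypotheses of Thm.~\ref{thm:BasicFiberProduct} as a component system; the corollary is then an immediate invocation of that theorem rather than an independent argument.

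First I would identify the pieces in the notation of Thm.~\ref{thm:BasicFiberProduct}. The underlying variables for this component system are the $d$ copies $(x_1,\dots,x_d)$ taking the role of $x$, no auxiliary variables $y$ are used, and the random constants $c$ encode the $D$ generic affine linear polynomials comprising $L_c:\Cc^n\to\Cc^D$. Each block $(f(x_i;p),L_c(x_i))$ is polynomial in $(x_i,p)$ with parameters $c$, so the concatenated $F_c$ is a polynomial system of the required form. The fiber product $\sF_a$ is then $a$ iterations of $F_c$ using independent generic slices $L_{c_1},\dots,L_{c_a}$ on $d$ fresh copies of the variables apiece.

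Next I would verify the key hypothesis, namely that $F_c$ imposes a condition on the parameter space when $(x_j,p)\in A$ for all $j$. Because each irreducible component of the pure $D$-dimensional target $A$ has multiplicity one with respect to $f$, a generic codimension-$D$ affine slice meets $A$ in a zero-dimensional set whose cardinality over the corresponding fiber equals the degree of $A$ there. Requiring $d$ simultaneous solutions to $(f(x_i;p),L_c(x_i))=0$ with $(x_i,p)\in A$, after discarding the diagonal components in which two of the $x_i$ coincide, is therefore the algebraic condition that the degree of the $D$-dimensional portion of $V(f(\,\cdot\,;p))$ accounted for by $A$ is at least $d$, which is a genuine restriction on $p$ whenever $d$ exceeds the generic degree.

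With these identifications made, the statement reduces to a direct application of Thm.~\ref{thm:BasicFiberProduct}: adjoining further generic component systems $F_{c_{a+1}}$ either strictly drops the dimension of the projection to the parameter space or, by genericity of $c_{a+1}$, never drops it again. The only real obstacle is bookkeeping for the diagonal locus. Distinct slices $L_{c_j}$ and $L_{c_{j'}}$ share no witness point generically, but within a single block the $d$ copies $x_1,\dots,x_d$ must be kept mutually distinct; hence $\Delta_a$ must be chosen to include the loci $\{x_i=x_{i'}\}$ inside each $F_{c_j}$, consistently with the natural inclusion $\Delta_a\hookrightarrow\Delta_{a'}$ demanded by Thm.~\ref{thm:BasicFiberProduct}. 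Once these spurious components are excised, the hypotheses of that theorem are met and its conclusion transfers without further work.
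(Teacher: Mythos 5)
Your proposal is correct and fills in exactly the verification the paper leaves implicit: Corollary~\ref{co:WitnessPoints} is stated without proof as an immediate instance of Theorem~\ref{thm:BasicFiberProduct}, and your argument supplies the missing identification of $(x_1,\dots,x_d)$ as the (enlarged) variable block, of $c$ as the slicing constants, and of the ``imposes a condition on the parameter space'' hypothesis via the degree count over generic fibers. Your observation that the within-block diagonals $\{x_i = x_{i'}\}$ must be placed in $\Delta_a$ (beyond the between-block diagonals of Remark~3.2) is a genuine subtlety that the paper does not spell out, and handling it the way you do is necessary for the fiber product system to encode ``degree at least $d$'' rather than something weaker; this matches the paper's intent.
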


For perturbed parameter values $\hat{p}$, one is looking 
for solutions to $Rf(x;\hat{p})=0$ for which~$f(x;\hat{p})$ evaluates to something close to $0$.
If one is considering witness points on components of various dimensions,
then Cor.~\ref{Cor:FiberProduct} applies with stacking fiber product systems
resulting from Cor.~\ref{co:WitnessPoints} for each dimension under consideration.  

\subsection{Illustrative example}\label{ex:WitnessPoints}

Consider the parameterized family of polynomial systems
\begin{equation}\label{eq:PosDim}
    f(x;p) = \left[\begin{array}{c}
    x_1 x_2 - 2x_1 + p_1 x_2 + p_2\\
    x_1^2 - 2x_1 + p_1x_1 + p_2
    \end{array}\right].
\end{equation}
For generic $p\in\Cc^3$, $f(x;p)=0$ consists of two isolated solutions.
However, for $\tilde{p}=(1,-2)$, the irreducible decomposition consists
of the line $V(x_1+1)$ and the point $(2,2)$ as shown 
in~Fig.~\ref{fig:PosDim_NID}.  Perturbing the parameters
with $\mathcal{N}(0,0.1^2)$ error
yielded $\hat{p}=(0.9876, -2.2542)$ to~4 decimal places
with the corresponding two isolated solutions also shown 
in~Fig.~\ref{fig:PosDim_NID}.
\begin{figure}[ht]
    \centering
\includegraphics[height=2.36in]{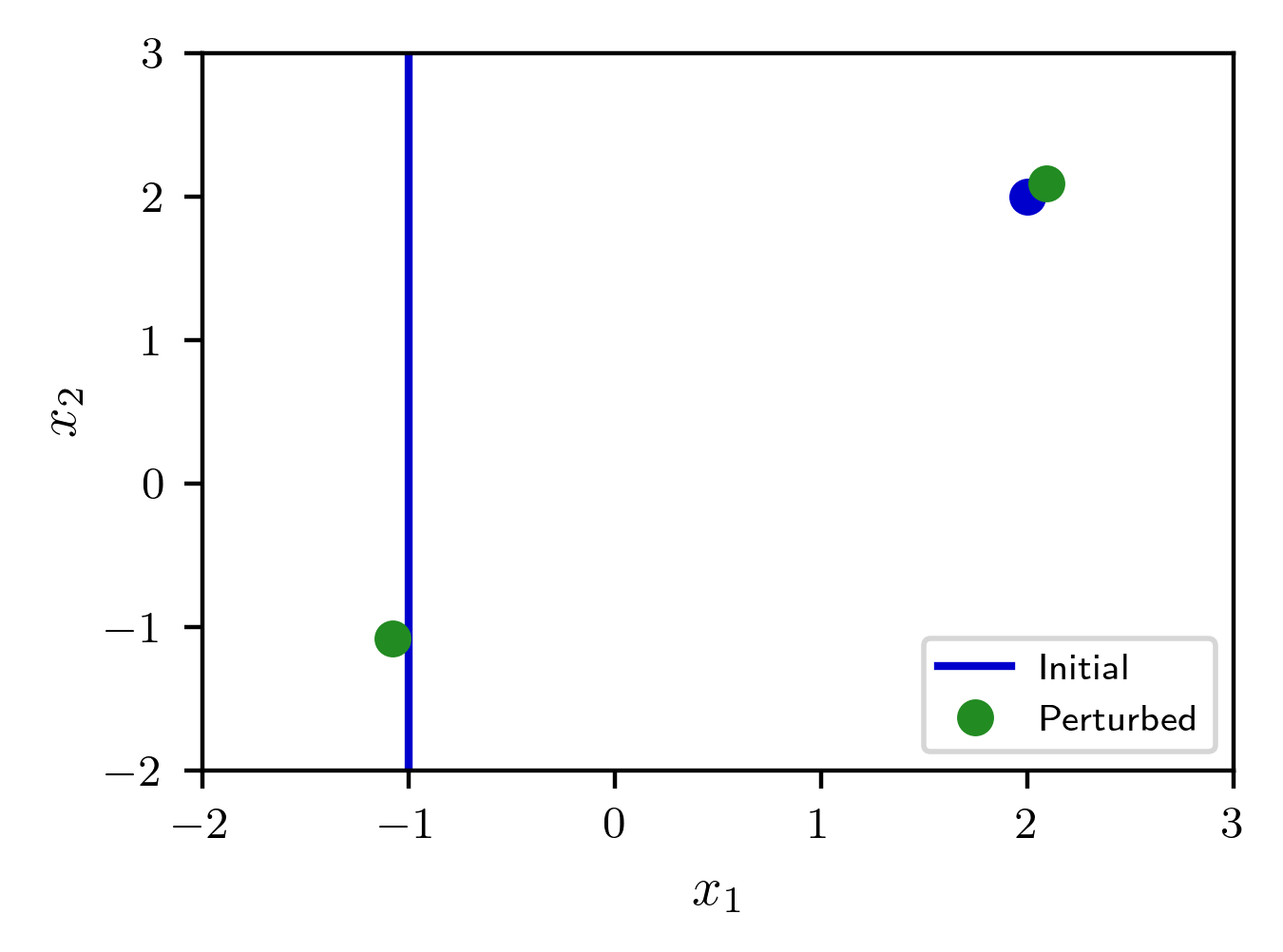}
    \caption{Solution sets for the initial and perturbed parameters}
    \label{fig:PosDim_NID}
\end{figure}

When considering witness points on a one-dimensional component,
the system under consideration has the form
\begin{equation}\label{eq:PosDim_Sliced}
f_R(x;p) = \left[
\begin{array}{c}
    Rf(x;p)\\
    L(x) 
\end{array}
\right] = \left[
\begin{array}{c}
     x_1 x_2 - 2x_1 + p_1 x_2 + p_2 + \Box (x_1^2 - 2x_1 + p_1 x_1 + p_2) \\
     \Box x_1 + \Box x_2 + \Box
\end{array}
\right]
\end{equation}
where each $\Box$ represents an independent random complex number
with $R = [1~\Box]$.  Solving~$f_R(x;\hat{p})$, 
there is one solution for which $f(x;\hat{p})$ 
is far from vanishing (called a nonsolution)
and one solution, call it $\hat{x}$ with~$\hat{x}_1$ in the vicinity of $-1$,
for which $f(\hat{x};\hat{p})$ is close to vanishing.  
Thus, we aim to recover
$p^*$ near~$\hat{p}$ for which this later point is an actual
witness point for a one-dimensional line.  
With this, the fiber product system becomes
\begin{equation}\label{eq:PosDim_FiberSystem}
    \mathcal{F} = \left[\begin{array}{c}
    x_1 x_2 - 2x_1 + p_1 x_2 + p_2\\
    x_1^2 - 2x_1 + p_1x_1 + p_2\\
    c_1 x_1 + c_2 x_2 + c_3
    \end{array}\right]
\end{equation}
and the critical point system using homogenized Lagrange multipliers
yields
\begin{equation}\label{eq:PosDim_Homotopy}
\mathcal{G} = \left[ \begin{array}{c}
    x_1 x_2 - 2x_1 + p_1 x_2 + p_2\\
    x_1^2 - 2x_1 + p_1 x_1 + p_2 \\
    c_1 x_1 + c_2 x_2 + c_3 \\ 
    \lambda_1 (x_2 - 2) + \lambda_2 (2 x_1 + p_1 - 2) + \lambda_3 c_1 \\
    \lambda_1 (x_1 + p_1) + \lambda_3 c_2 \\
    \lambda_0 (p_1 - \hat{p}_1) + \lambda_1 x_2 + \lambda_2 x_1 \\
    \lambda_0 (p_2 - \hat{p}_2) + \lambda_1 + \lambda_2\\
\end{array} \right].
\end{equation}
With $(\hat{x},\hat{p})$, tracking a single path with a gradient
descent homotopy \eqref{eq:GradDescentHomotopy} 
recovers a nearby parameter $p^*$ listed in Table \ref{table:PosDim_Parmaeters} 
to four decimal places.  Recomputing a numerical irreducible
decomposition for $f(x;p^*)$ yields a line and an isolated point
as requested.

\begin{table}[htb]
\caption{Initial (exact), perturbed (4 decimals), and recovered (4 decimals) parameter values}
\label{table:PosDim_Parmaeters}
\centering
    \begin{tabular}[2in]{cccc}
    \toprule
    Parameter & Initial ($\tilde{p}$) & Perturbed ($\hat{p}$) & Recovered ($p^*$)\\
    \midrule
    $p_1$ & \,\,\,1  & \,\,\,0.9876  & \,\,\,1.0992  \\
    $p_2$ & --2  & --2.2542  & --2.1984 \\
    \bottomrule
    \end{tabular}
\end{table}

We repeated this process by sampling 500 points from a bivariate Gaussian distribution centered at the initial parameter values $\tilde{p} = (1, -2)$ with covariance matrix $\Sigma = 0.1^2 I_2$ where each sample represents 
parameter values with error. 
In Fig.~\ref{fig:PosDim_ProjectedPoints}, the aforementioned
$\tilde{p}$ is shown as a square, and $\hat{p}$ and $p^*$ are triangles,
while the additional sampled values and recovered parameters are shown as circles.  For this simple problem, it is easy to verify
that all recovered parameter values lie along the line $V(2p_1+p_2)$.

\begin{figure}[htb]
    \centering
    \begin{tabular}{cc}
    \includegraphics[scale=0.88]{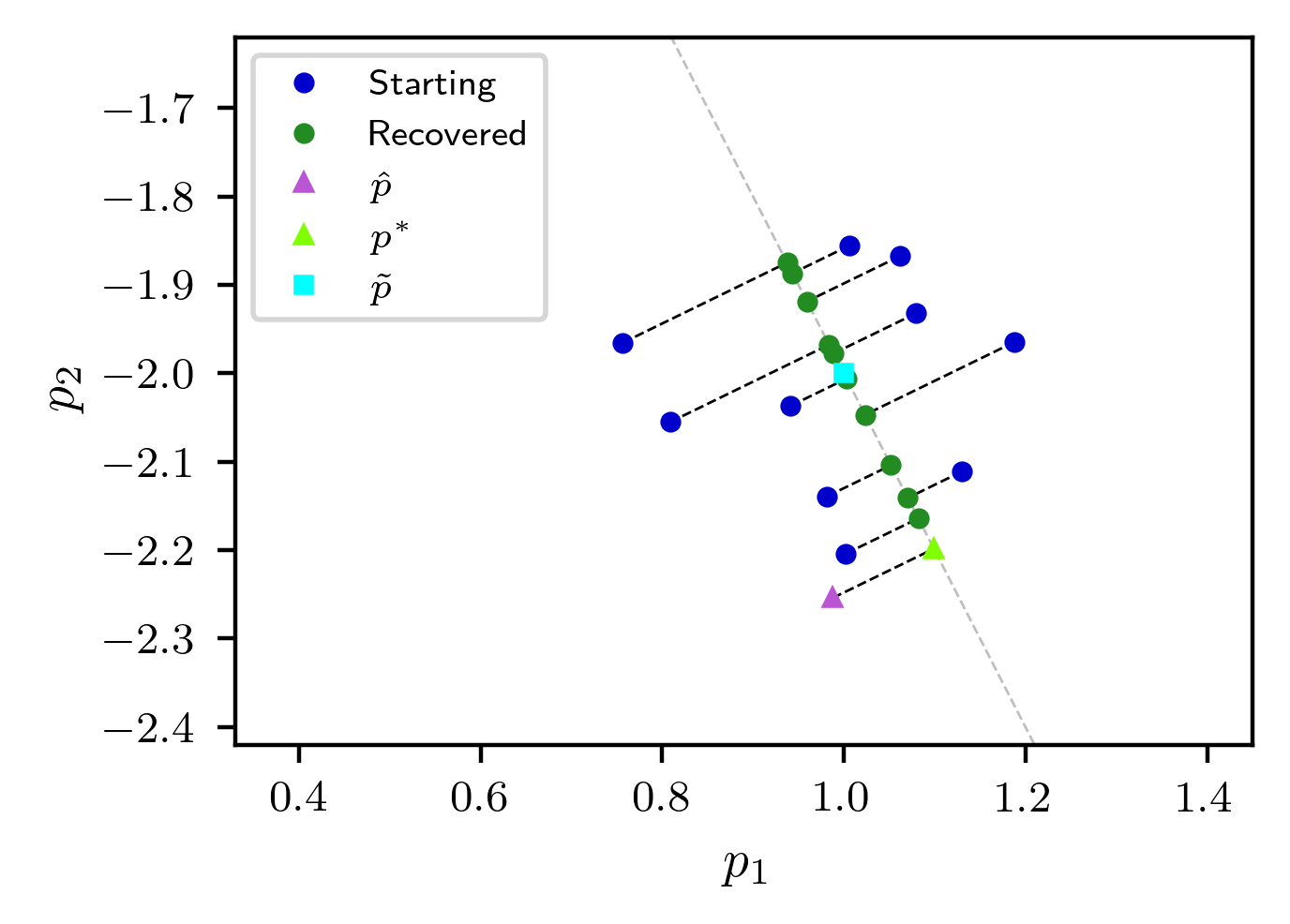} & 
    \includegraphics[scale=0.88]{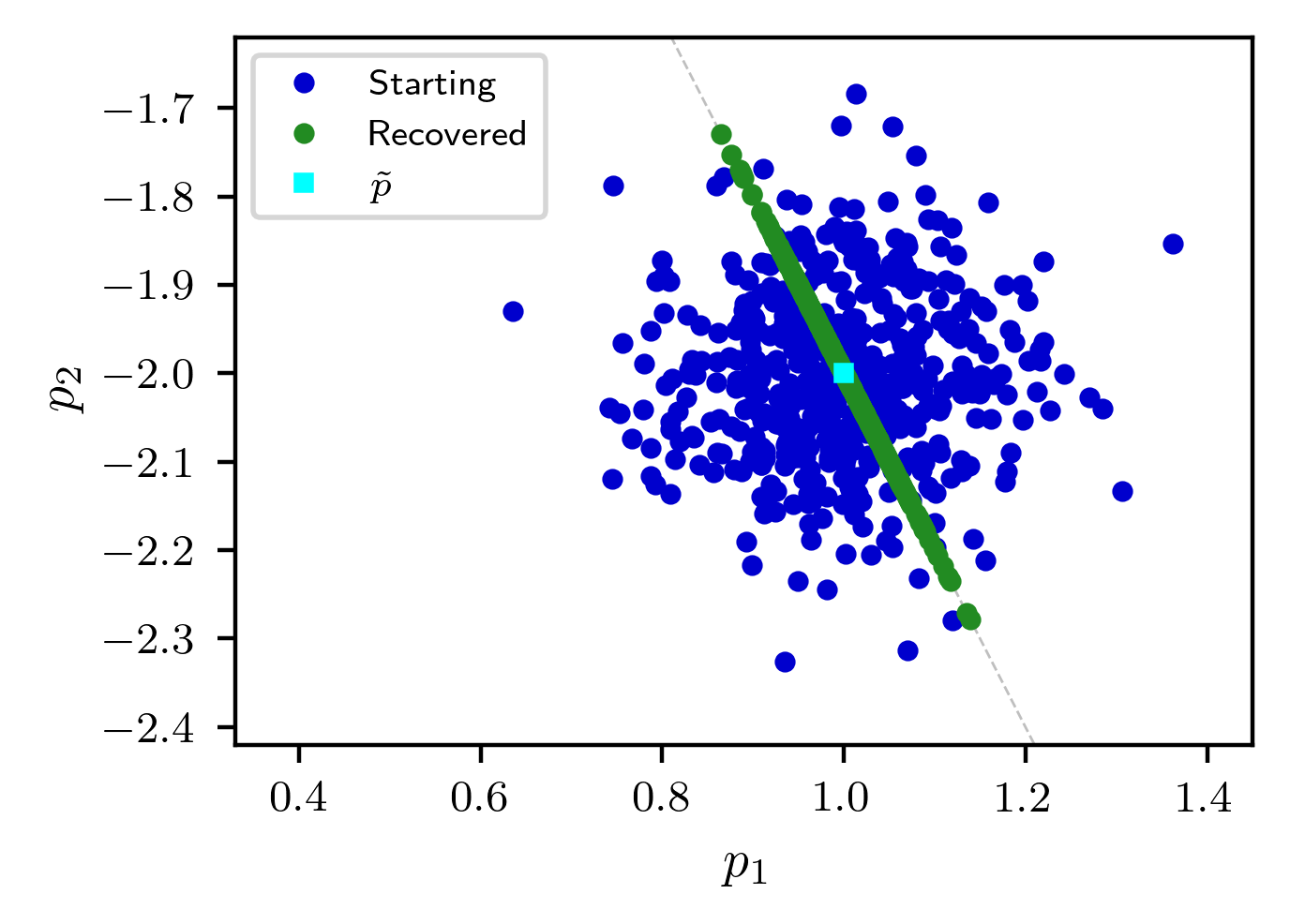} \\
    (a) & (b) 
    \end{tabular}
    \caption{(a) Illustration of recovering parameters for various
    perturbations including the example summarized in Table~\ref{table:PosDim_Parmaeters};
    (b) Illustration using 500 samples}
    \label{fig:PosDim_ProjectedPoints}
\end{figure}

To visualize marginal histograms of the recovered parameter values from 
the $500$ samples, Fig.~\ref{fig:PostDim_Histograms} shows
the $p_1$ and $p_2$ coordinates along with an intrinsic
coordinate parameterizing the line with $0$ corresponding
to $\tilde{p}=(1,-2)$.  
\begin{figure}[htb]
    \centering
    \begin{tabular}{ccc}
    \includegraphics[scale=0.62]{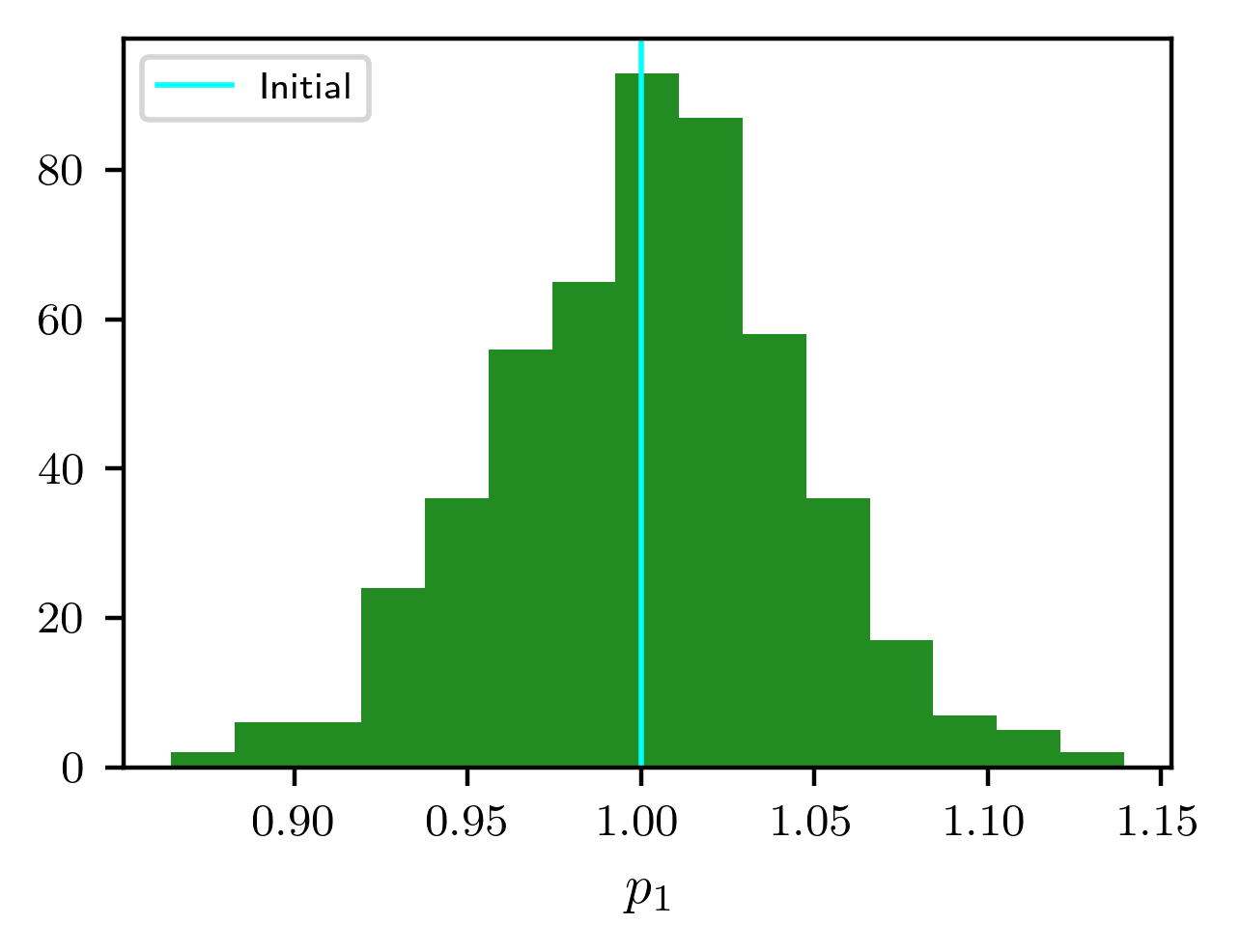} & 
    \includegraphics[scale=0.62]{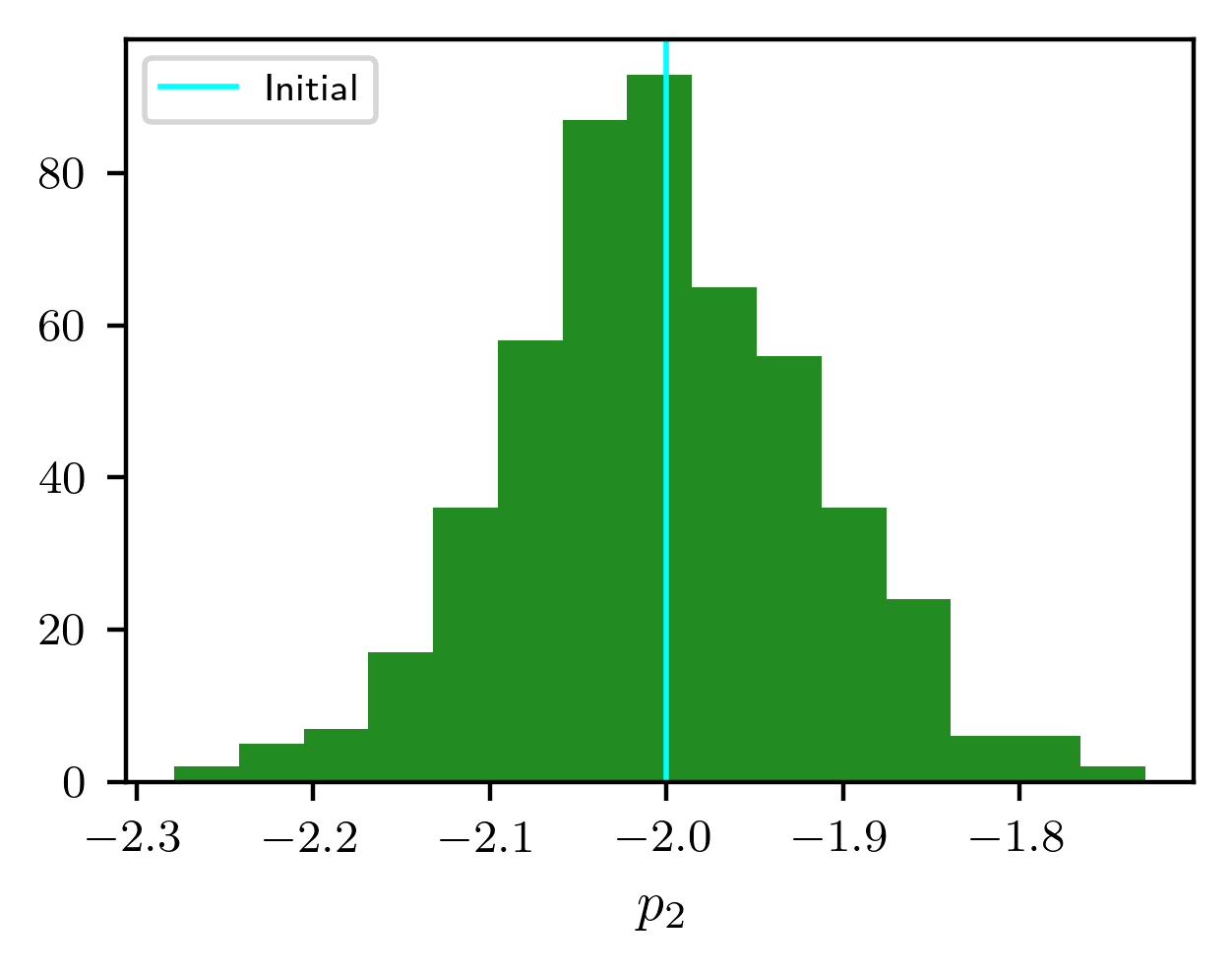} & 
    \includegraphics[scale=0.62]{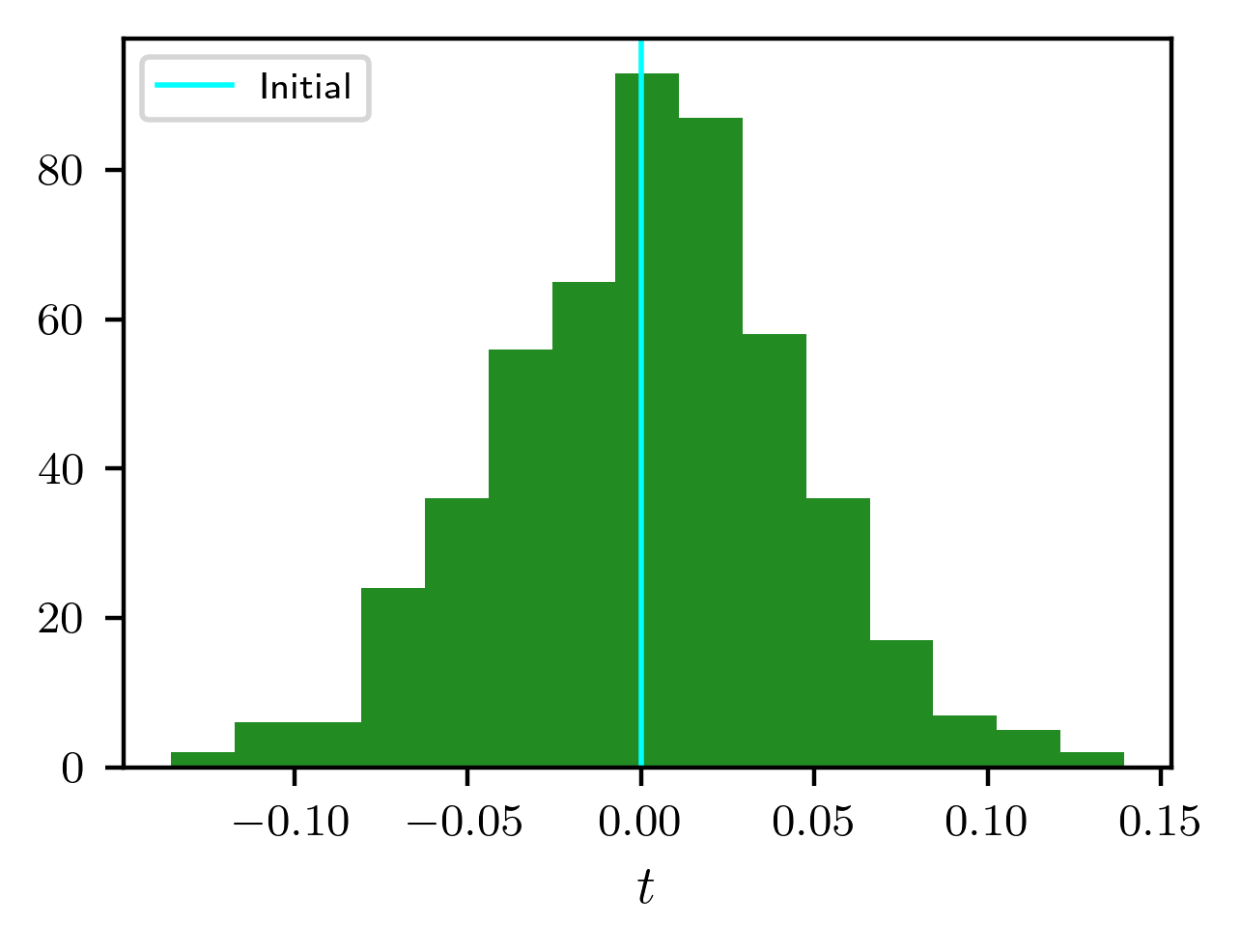} \\
    (a) & (b) & (c) 
    \end{tabular}
    \caption{Histograms for (a) $p_1$, (b) $p_2$, and (c) intrinsic parameterizing coordinate for recovered parameter values from $500$ samples}
    \label{fig:PostDim_Histograms}
\end{figure}

For a standard multivariate Gaussian, all marginals are Gaussian.  So, if we orthogonally project a multivariate Gaussian centered at $\tilde{p}$ onto a linear space passing through $\tilde{p}$, this will yield a Gaussian distribution in the linear space. 
In the
case presented here, the perturbations from the initial parameters are both generated as zero-mean with standard deviation $0.1$, so 
the recovered parameters along the line should be centered on the initial parameters with that same standard
deviation. Figure~\ref{fig:PostDim_Histograms}(c) is consistent with that expectation. 
Moreover, orthogonally projecting the distribution of the perturbed parameters onto the line perpendicular to $V(2p_1+p_2)$
will also be distributed as Gaussian with standard deviation~$0.1$. 
If one were given just the perturbed parameters and their accuracy, described
as a statistical distribution, one could calculate a confidence 
in the null hypothesis that the given
parameters are drawn from a distribution centered on an initial value for which $V(f(x;\tilde{p}))$ has one component that is a line.
In this case of a single remaining degree of freedom in parameter space, a $Z$-score for $||\hat p-p^*||$ would be
informative. 

We will not delve into statistical analyses for more general cases. Nevertheless,
we remark that
if the exceptional set in parameter space is codimension $s$
and the incoming parameters are perturbed
from the exceptional set with a normal distribution 
$\sN(0,\sigma^2 I)$, then the squared distance 
$\sigma^{-2}||\hat p-p^*||^2$
is a chi-squared distribution with $s$ degrees of freedom. (This assumes that the exceptional set
is locally smooth and $\sigma$ is small enough that a local linearization of the exceptional set
is accurate on the scale of $\sigma$.) 
If the perturbations have a more general normal distribution, say $\sN(0,\Sigma)$,
then it would be appropriate to change the norm used in~\eqref{eq:Optimization} to $(p-\hat p)^T\Sigma^{-1}(p-\hat p)$
so that we are searching for a maximum likelihood estimate. The same norm would then enter into a 
chi-square confidence estimate.

\section{Traces and numerical irreducible decomposition}\label{sec:Decompose}

With Section~\ref{sec:HigherDim} considering witness points,
the next structure to consider applying this robust framework to is
computing parameter values which have solution components 
that decompose into various irreducible components.

\subsection{Reducibility}\label{sec:Irreducible}

In a numerical irreducible decomposition, the collection
of witness points is partitioned into subsets corresponding
with the irreducible components.  One approach for performing
this decomposition is via the trace test \cite{BHL,HR15,TraceLRS,SVWtraces}
and a key decision is to determine when the linear trace vanishes.  
For exact systems, this can be determined robustly 
by computing the linear trace to higher precision, but becomes
an uncertain task for systems with error as perturbations
tend to destroy reducibility.  

The form of the trace test that we will employ here is the
second derivative trace test from \cite{BHL}
as this can be employed locally.  Suppose that 
$\{f,L_D,W\}$ is a witness set for a pure $D$-dimensional
component $X$ of $V(f)$.  Since discussions about multiplicity
are provided in Section~\ref{sec:Multiplicity},
suppose that each irreducible component of $X$ has multiplicity
$1$ with respect to~$f$.
Moreover, by replacing $f$ with a randomization,
we can assume that $f:\Cc^n\rightarrow\Cc^{n-D}$.  
Let $W_r \subset W$ consist of $r$ points.  
Then, there is a pure $D$-dimensional
component $X'\subset X$ with $X'\cap V(L_D) = W_r$
if and only if, for a general $L_D':\Cc^n\rightarrow\Cc^D$ 
and general $\alpha\in \Cc^n$,
\begin{equation}\label{eq:Trace}
\alpha\cdot \sum_{j=1}^r \ddot{w}_j = 0
\end{equation}
where $\{w_1,\dots,w_r\} = X'\cap V(L_D')$, and
$\dot{w}_j$ and $\ddot{w}_j$ satisfy
$$
\left[\begin{array}{c} Jf(w_j) \\ JL_D'(w_j) \end{array}\right] \cdot
\dot{w}_j = \left[\begin{array}{c} 0 \\ 1 \end{array}\right]
\hbox{~~and~~}
\left[\begin{array}{c} Jf(w_j) \\ JL_D'(w_j) \end{array}\right] \cdot \ddot{w}_j = -\left[\begin{array}{c} \dot{w}_j^T\cdot \Hessian(f_1)(w_j) \cdot \dot{w}_j \\ \vdots 
\\ \dot{w}_j^T\cdot \Hessian(f_{n-D})(w_j) \cdot \dot{w}_j \\ 0
\end{array}\right].
$$
Thus, for chosen slices and randomizing vectors, 
one can utilize fiber products to recover reducibility
of a component of degree $r$.

\begin{corollary}\label{co:Irreducible}
With the setup described above, Thm.~\ref{thm:BasicFiberProduct}
holds when applied to 
$$F_c(x_1,\dot{x}_1,\ddot{x}_1,\dots,x_r,\dot{x}_r,\ddot{x}_r,
p) = \left[\begin{array}{c} f(x_1;p) \\ L_c(x_1) \\
\left[\begin{array}{c} Jf(x_1;p) \\ JL_c(x_1) \end{array}\right]\cdot \dot{x}_1 - \left[\begin{array}{c} 0 \\ 1 \end{array}\right] \\
\left[\begin{array}{c} Jf(x_1;p) \\ JL_c(x_1) \end{array}\right] \cdot \ddot{x}_1 + \left[\begin{array}{c} \dot{x}_1^T\cdot \Hessian(f_1)(x_1;p) \cdot \dot{x}_1 \\ \vdots 
\\ \dot{x}_1^T\cdot \Hessian(f_{n-D})(x_1;p) \cdot \dot{x}_j \\ 0
\end{array}\right] \\ \vdots \\ f(x_r;p) \\ L_c(x_r) \\
\left[\begin{array}{c} Jf(x_r;p) \\ JL_c(x_r) \end{array}\right]\cdot \dot{x}_r - \left[\begin{array}{c} 0 \\ 1 \end{array}\right] \\
\left[\begin{array}{c} Jf(x_r;p) \\ JL_c(x_r) \end{array}\right] \cdot \ddot{x}_r + \left[\begin{array}{c} \dot{x}_r^T\cdot \Hessian(f_1)(x_r;p) \cdot \dot{x}_r \\ \vdots 
\\ \dot{x}_r^T\cdot \Hessian(f_{n-D})(x_r;p) \cdot \dot{x}_r \\ 0
\end{array}\right] \\ 
\alpha_c\cdot(\ddot{x}_1 + \cdots + \ddot{x}_r)
\end{array}\right]$$
where $c$ contains the coefficients of $L_c:\Cc^n\rightarrow\Cc^D$
and $\alpha_c\in\Cc^n$.
\end{corollary}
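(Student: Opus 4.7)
We need to verify the hypotheses of Thm.~\ref{thm:BasicFiberProduct} for the system $F_c$ displayed in Cor.~\ref{co:Irreducible}. Identify the auxiliary variables as $y_j=(\dot{x}_j,\ddot{x}_j)$ and the random constants $c$ as the coefficients of the slice $L_c$ together with the vector $\alpha_c\in\Cc^n$. Two things must then be checked: that $F_c$ is a polynomial system, and that when $(x_j,p)\in A$ for every $j$ the equations $F_c=0$ cut out a proper algebraic subset of the projection of $A$ onto the $p$-coordinates.

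Polynomiality is immediate. Each block involves $f(x_j;p)$ and $L_c(x_j)$ together with equations linear in $\dot{x}_j,\ddot{x}_j$ whose coefficients are entries of $Jf(x_j;p)$, $JL_c(x_j)$, and of $\Hessian(f_i)(x_j;p)$, all of which are polynomial in $(x_j,p)$; the final row is linear in the $\ddot{x}_j$. Since $f$ has been randomized to a square system and every irreducible component of $X$ is assumed to have multiplicity one with respect to $f$, for a generic slice $L_c$ the stacked Jacobian obtained by placing $Jf(x_j;p)$ on top of $JL_c(x_j)$ is invertible at each witness point. Hence the two internal block equations uniquely determine $\dot{x}_j$ and $\ddot{x}_j$ as the first and second derivatives at $t=0$ of the branch through $x_j$ of the sliced curve obtained by translating the last row of $L_c$ by $t$; this is exactly the differential data appearing in \eqref{eq:Trace}.

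Next, apply the second-derivative trace test of \cite{BHL}: when the final equation $\alpha_c\cdot\sum_{j=1}^r\ddot{x}_j=0$ holds for a generic $\alpha_c$, the points $\{x_1,\dots,x_r\}$ form the complete slice of a pure $D$-dimensional subcomponent $X'\subsetneq A_p$ of degree $r$. Because $A$ is an irreducible algebraic subset of $V(f(x,p))$, for a dense Zariski-open set of $p\in\pi(A)$ the fiber $A_p$ is itself pure-dimensional and irreducible, so no proper subcomponent of degree $r<\deg A_p$ exists. Consequently, $F_c=0$ together with $(x_j,p)\in A$ forces $p$ into a proper algebraic subset of $\pi(A)$, which is precisely the ``imposes a condition on the parameter space'' requirement of Thm.~\ref{thm:BasicFiberProduct}. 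That theorem then applies verbatim.

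The main obstacle is justifying the identification of $\dot{x}_j,\ddot{x}_j$ with the true tangent and second-order data used in \cite{BHL}; this rests squarely on the multiplicity-one, invertible-Jacobian hypothesis together with genericity of $L_c$. A secondary technical point is that the diagonal locus where some $x_j=x_{j'}$, or where $\{x_1,\dots,x_r\}$ exhausts the full slice of $A_p$, must be absorbed into the ignored set $\Delta_a$ of Thm.~\ref{thm:BasicFiberProduct}, since on such components \eqref{eq:Trace} is satisfied trivially without signalling a decomposition of $A_p$.
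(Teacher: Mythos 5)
The paper gives no explicit proof of Cor.~\ref{co:Irreducible}; it is presented as an immediate encoding of the second-derivative trace test~\eqref{eq:Trace} from \cite{BHL} in the format required by Thm.~\ref{thm:BasicFiberProduct}. Your plan follows exactly that route, and your bookkeeping (identifying $y_j=(\dot x_j,\ddot x_j)$ as auxiliary variables and $c,\alpha_c$ as the random constants, checking polynomiality, and invoking the nonsingularity of the stacked Jacobian under the multiplicity-one hypothesis to pin down $\dot x_j$ and $\ddot x_j$ as the differential data from~\eqref{eq:Trace}) is correct and in the spirit the paper intends.

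There is, however, one step that does not hold as stated. You write that since $A$ is an irreducible algebraic subset of $V(f(x,p))$, its generic fiber $A_p$ over $\pi(A)$ is irreducible, and use this to conclude that the trace equation cuts out a proper subset of $\pi(A)$. Irreducibility of the total space does not imply irreducibility of generic fibers: a two-sheeted cover such as $V(x^2-p)\subset\Cc^2$ is irreducible while every fiber over $p\neq 0$ has two points. What one actually needs is that for generic $p$ the component $X$ being sliced is itself irreducible (equivalently, that monodromy acts transitively on its witness points), so that no proper subset of $W$ satisfies~\eqref{eq:Trace}. That genericity statement should be stated as a hypothesis or obtained from the irreducible decomposition of the family, not deduced from irreducibility of $A$. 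This only affects the nontriviality of the imposed condition, not the applicability of Thm.~\ref{thm:BasicFiberProduct} itself, which dichotomizes between a dimension drop and stabilization regardless; still, as written the justification is incorrect and should be repaired. Your remark that the diagonal loci and the case $W_r=W$ must be placed into $\Delta_a$ is a useful observation the paper leaves implicit.
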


For perturbed parameter values $\hat{p}$, one is looking for
collections of points for which~\eqref{eq:Trace} is close to $0$.
The situation in Cor.~\ref{co:Irreducible} covers the case
when $X$ has degree $d$ and one is considering decomposing $X$
into a degree $r$ and degree $d-r$ component (where \hbox{$1\leq r\leq d-r \leq d$}). If one is considering factorization into more than two components
or the factorization of components in different dimensions, then 
Cor.~\ref{Cor:FiberProduct} applies with stacking fiber product
systems resulting from Cor.~\ref{co:Irreducible}.
Moreover, in the multiplicity $1$ case considered here, 
\mbox{Remark~\ref{remark:Lemma3Dimension}~applies}.

\subsection{Illustrative example}\label{ex:Irreducible}

Consider the parameterized family of polynomial systems
\begin{equation}\label{eq:Zeke}
    f(x;p) = p_1 + p_2 x_1 + p_3 x_1^2 + p_4 x_1^3 + p_5 x_1 x_2 + p_6 x_1^2 x_2 + p_7 x_1^3 x_2 + p_8 x_2^2 + p_9 x_1^2 x_2^2 + p_{10} x_1 x_2^3
\end{equation}
from \cite{wu2017numerical}.  
For generic $p\in\Cc^{10}$, $f(x;p) = 0$ defines 
a quartic plane curve. 
The problem described in~\cite[Ex.~2]{wu2017numerical} considers
the parameters $\tilde{p}=(-30,20,18,-12,12,-8,0,-5,3,2)$
with perturbation $\hat{p} = (-30,20,18,-12,12.000007,-8,0.0000003,-5,3,2)$
so that 
$$
    f(x;\tilde{p}) = (3x_1^2 + 2x_1 x_2 - 5)(x_2^2 - 4x_1 + 6)
    \hbox{~~and~~}
    f(x;\hat{p}) = f(x;\tilde{p}) + 0.0000003 x_1^3 x_2 + 0.000007 x_1 x_2
$$
with the corresponding quartic plane curves illustrated in Fig.~\ref{fig:Zeke_NID}.
\begin{figure}[htb]
    \centering
    \begin{tabular}{cc}
    \includegraphics[scale=0.88]{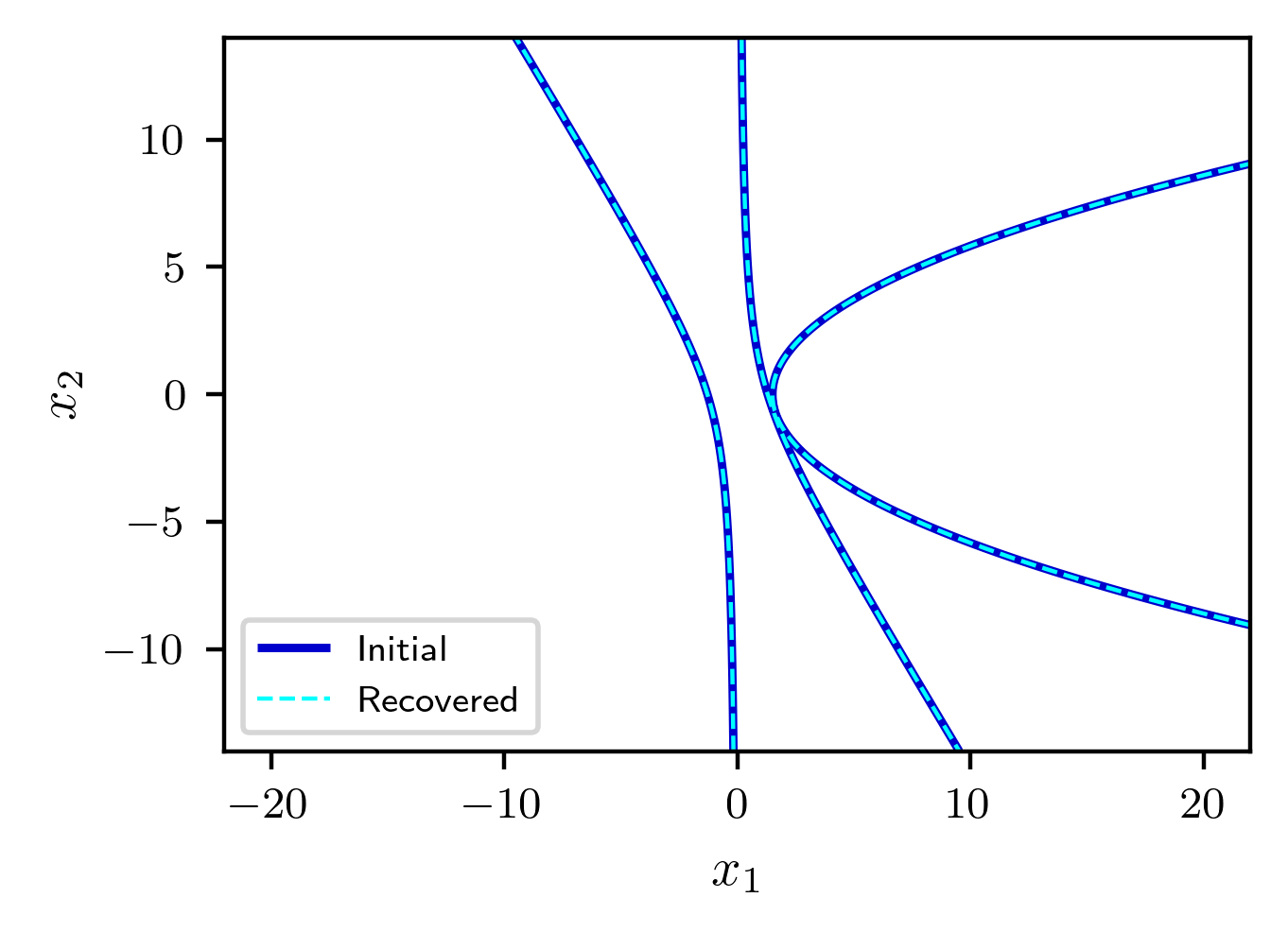} &
    \includegraphics[scale=0.88]{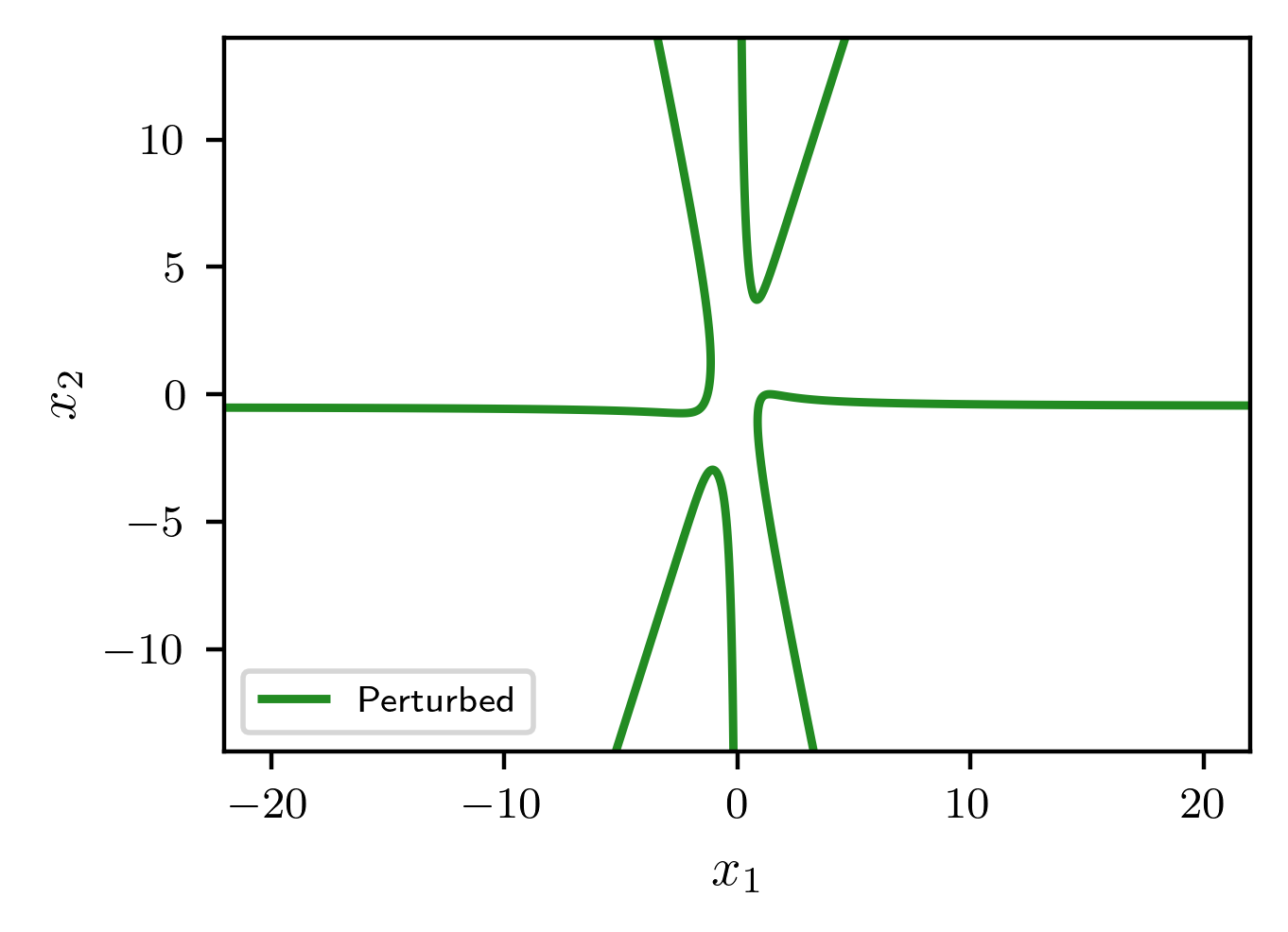} \\
    (a) & (b) \\
    \end{tabular}
    \caption{Solution sets corresponding to the (a) initial and recovered parameters, and (b)~perturbed parameters}
    \label{fig:Zeke_NID}
\end{figure}

For illustration, Table~\ref{tab:Trace}
considers the result of intersecting $f(x;\hat{p})=0$ 
with the linear space defined by $2x_1 - 3x_2 = 1$.
Clearly, we see that both $\ddot{w}_1+\ddot{w}_2$
and $\ddot{w}_3+\ddot{w}_4$ are close to zero 
indicating that we should consider computing parameters $p^*$
near $\hat{p}$ for which $f(x;p^*)$
factors into two quadratics via the second derivative trace test,
i.e., apply Cor~\ref{co:Irreducible} with $r=2$.

\begin{table}[htb]
\caption{Summary of solutions (8 decimals) satisfying $f(x;\hat{p}) = 2x_1-3x_2-1=0$}
\label{tab:Trace}
\centering
    \begin{tabular}[2in]{crrr}
    \toprule
    $j$ & \multicolumn{1}{c}{$w_j$} & \multicolumn{1}{c}{$\dot{w_j}$} & \multicolumn{1}{c}{$\ddot{w}_j$}\\
    \midrule
    $1$ & $\begin{array}{r} 
    1.15384590 \\ 0.43589727 \end{array}$ & 
    $\begin{array}{r} 
    0.08241763\\ -0.27838824 \end{array}$ & 
    $\begin{array}{r}
    0.00546655 \\ 0.00364437 \end{array}$ \\[0.15in]
    $2$ & $\begin{array}{r} 
    -0.99999993 \\ -0.99999995\end{array}$ & 
    $\begin{array}{r} 
    0.07142858 \\ -0.28571428 \end{array}$ & 
    $\begin{array}{r}
    -0.00546648 \\-0.00364432\end{array}$ \\[0.15in]
    $3$ & $\begin{array}{r} 
    1.64589862 \\ 0.76393241\end{array}$ & 
    $\begin{array}{r} 
    -0.17082057 \\ -0.44721371 \end{array}$ &
    $\begin{array}{r}
    0.13416408 \\ 0.08944272\end{array}$ \\[0.15in]
    $4$ & $\begin{array}{r} 
    8.35410056 \\ 5.23606704\end{array}$ & 
    $\begin{array}{r} 
    1.17082044  \\ 0.44721363 \end{array}$ & 
    $\begin{array}{r}
    -0.13416415 \\ -0.08944277\end{array}$ \\
    \bottomrule
    \end{tabular}
\end{table}

Utilizing Remark~\ref{remark:Lemma3Dimension}, Table~\ref{table:Zeke_FiberProducts} shows the corresponding 
dimension of parameter space based on the number of component systems
utilized.  In this case, one sees a stabilization of the dimension
of the parameter space to $6$ using $4$ component systems.
That is, we expect to recover parameters $p^*$ contained in a
$6$-dimensional parameter space.  
The resulting gradient descent homotopy \eqref{eq:GradDescentHomotopy} 
with homogenized Lagrange multipliers is a square system
consisting of the same number of variables and equations which is 
also listed in Table~\ref{table:Zeke_FiberProducts}.  
Finally, Table~\ref{table:Zeke_FiberProducts} also records
the numerical irreducible decomposition of $f(x;p^*)$ for
the corresponding recovered $p^*$.  This column also indicates 
that $4$ component systems are needed
with the recovered parameters (to $7$ decimal places) provided
in Table~\ref{table:Zeke_Parmaeters}.  
For comparison, written using double precision, the recovered factorization from \cite{wu2017numerical} and $p^*$ is provided in Table~\ref{table:Zeke_Factors}.  The key difference is
that \cite{wu2017numerical} enforced $p_7=0$
so that the third factor
maintained the same monomial structure as the exact system 
while the second derivative trace test 
only enforced factorability.  Hence, with the additional constraint,
the recovered factorization in~\cite{wu2017numerical}
is further away ($3.76\cdot10^{-6}$) from $p^*$ than $\hat{p}$ ($3.15\cdot10^{-6}$).  
Of course, one could impose the additional parameter space
condition, namely, $p_7 = 0$, 
with the second derivative trace test approach and recover
the same factorization as~\cite{wu2017numerical}.

\begin{table}[htb]
\caption{Summary for different component systems}
\label{table:Zeke_FiberProducts}
\centering
    \begin{tabular}[2in]{cccr}
    \toprule
    Component Systems & Dimension & \multicolumn{1}{c}{System Size} & \multicolumn{1}{c}{Recovered Components}\\
    \midrule
    1 & 9 & \,\,\,35 & One component of degree 4\\
    2 & 8 & \,\,\,60 & One component of degree 4\\
    3 & 7 & \,\,\,85 & One component of degree 4\\
    4 & 6 & 110 & \,\,Two components of degree 2\\
    5 & 6 & 135 & \,\,Two components of degree 2\\
    \bottomrule
    \end{tabular}
\end{table}

\begin{table}[htb]
\caption{Initial (exact), perturbed (exact), and recovered (7 decimals) parameter values}
\label{table:Zeke_Parmaeters}
\centering
    \begin{tabular}[2in]{cccc}
    \toprule
    Parameter & Initial ($\tilde{p}$) & Perturbed ($\hat{p}$) & Recovered ($p^*$)\\
    \midrule
    $p_1$ & --30  & --30.0000000  & --30.0000003  \\
    $p_2$ & \,\,\,20  & \,\,\,20.0000000  & \,\,\,19.9999994 \\
    $p_3$ & \,\,\,18  & \,\,\,18.0000000  & \,\,\,18.0000003 \\
    $p_4$ & --12  & --12.0000000  & --11.9999997 \\
    $p_5$ & \,\,\,12  & \,\,\,12.0000070  & \,\,\,12.0000057 \\
    $p_6$ & \,\,\,--8  & \,\,\,--8.0000000 & \,\,\,--8.0000019 \\
    $p_7$ & \,\,\,\,\,\,0  & \,\,\,\,\,\,0.0000003  & \,\,\,--0.0000014 \\
    $p_8$ & \,\,\,--5  & \,\,\,--5.0000000  & \,\,\,--5.0000002 \\
    $p_9$ & \,\,\,\,\,\,3  & \,\,\,\,\,\,3.0000000  & \,\,\,\,\,\,2.9999992 \\
    \,\,$p_{10}$ & \,\,\,\,\,\,2  & \,\,\,\,\,\,2.0000000  & \,\,\,\,\,\,2.0000006 \\
    \bottomrule
    \end{tabular}
\end{table}

\begin{table}[htb]
\caption{Comparison of factors (double precision)}
\label{table:Zeke_Factors}
\centering
    \begin{tabular}[2in]{ll}
    \toprule
\multirow{3}{*}{Factors from~\cite{wu2017numerical}} & 
\hbox{\footnotesize $-30.0000005908641$} \\
& \hbox{\footnotesize $1-0.600000000000000 x_1^2-0.400000158490569 x_1 x_2$} \\

& \hbox{\footnotesize $1-0.666666625730982 x_1+0.166666656432746 x_2^2$} \\
\hline
\multirow{3}{*}{Factors from~$p^*$} & \hbox{\footnotesize $-30.0000003490653$} \\
& \hbox{\footnotesize $1 - 0.600000003568289 x_1^2 - 0.400000104966183 x_1 x_2$} \\
& \hbox{\footnotesize $1 - 0.666666639555509 x_1 + 0.166666672330951 x_2^2  - 7.87997843076905 \cdot 10^{-8} x_1 x_2$} \\
    \bottomrule
    \end{tabular}
\end{table}

\section{Multiplicity and local Hilbert function}\label{sec:HigherMult}

The final structure we consider for applying this robust framework
to is to compute parameter values which have solutions 
with specified multiplicity and local Hilbert function.  

\subsection{Macaulay matrix}\label{sec:Multiplicity}

For a univariate polynomial $u(x)$, a number $x^*$
is said to have multiplicity $\mu\geq 0$ if and only if 
$u(x^*) = u'(x^*) = u''(x^*) = \cdots = u^{(\mu-1)}(x^*) = 0$
and $u^{(\mu)}(x^*)\neq0$.
For a multivariate polynomial system, derivatives are replaced
with partial derivatives leading to different ways of having a solution
with multiplicity $\mu$.  One approach for computing multiplicity
in multivariate systems is via Macaulay matrices first introduced
in \cite{Macaulay16} and utilized in various methods such as
\cite{LDT,DZ05,GHPS,MultAlg,Stetter04,MechMobility,Closedness09} to name a few.

For $\alpha\in\Zz_{\geq0}^n$, define
$$|\alpha| = \alpha_1+\cdots+\alpha_n, ~~~\alpha! = \alpha_1!\cdots\alpha_n!, ~~~\hbox{and}~~~ \partial_\alpha = \frac{1}{\alpha!}\frac{\partial^{|\alpha|}}{\partial x^\alpha}.$$
For $x^*\in\Cc^n$, consider the 
linear functional $\partial_\alpha[x^*]$ from polynomials in $x$ 
to $\Cc$ defined by
$$\partial_\alpha[x^*](g) = (\partial_\alpha g)(x^*)$$
which is simply the coefficient of $(x-x^*)^\alpha$ 
in an expansion of $g(x)$ about $x^*$.  
For a polynomial system $f:\Cc^n\rightarrow\Cc^k$ and $d\in\Zz_{\geq0}$,
the $d^{\rm th}$ Macaulay matrix of $f$ at $x^*$ is
$$
M_d(f,x^*) = \left[ \partial_\alpha[x^*]\left((x-x^*)^\beta f_j\right)
\hbox{~~such that~~} |\alpha|\leq d, |\beta|\leq\max\{0,d-1\}, j = 1,\dots,k
\right]
$$
where the rows are indexed by $(\beta,j)$ while the columns are indexed
by $\alpha$.  Define $\beta\leq\alpha$ if $\beta_a\leq\alpha_a$ for all
$a=1,\dots,n$.  By Leibniz rule, 
$$
\partial_\alpha[x^*]\left((x-x^*)^\beta f_j\right) = 
\left\{\begin{array}{ll}
\partial_{\alpha-\beta}[x^*](f_j) & \hbox{~if~}\beta \leq \alpha, \\
0 & \hbox{~otherwise.}\end{array}\right.
$$
For example, $M_0(f,x^*) = f(x^*)$ and $M_1(f,x^*) = \left[f(x^*) ~~ Jf(x^*)\right]$.  Moreover, there are matrices $A_d(f,x^*)$ and $B_d(f,x^*)$
such that 
\begin{equation}\label{eq:MMatrix}
M_{d+1}(f,x^*) = \left[\begin{array}{cc} M_d(f,x^*) & A_d(f,x^*) \\
0 & B_d(f,x^*) \end{array}\right].
\end{equation}

The local Hilbert function of $f$ at $x^*$ is
$$h_{f,x^*}(d) = \dim \NULL M_d(f,x^*) - \dim \NULL M_{d-1}(f,x^*)$$
where one defines $\dim \NULL M_{-1}(f,x^*) = 0$.
In particular, $x^*\in V(f)$ if and only if $h(0) = 1$.
Moreover, if $x^*\in V(f)$, then $x^*$ is isolated in~$V(f)$ if and only if 
there exists $d^*\geq0$ such that $h_{f,x^*}(d) = 0$ for all $d>d^*$
with multiplicity $\mu = \dim \NULL M_{d^*}(f,x^*) = \sum_{d=0}^{d^*} h_{f,x^*}(d)$.
Such a statement was used in \cite{LDT,MechMobility} to construct
a local approach to decide if $x^*\in V(f)$ was isolated or not. 

The following shows how to impose a local Hilbert function condition.

\begin{corollary}\label{co:Multiplicity}
With the setup described above, let $d\in\Zz_{\geq0}$ and $x^*\in\Cc^n$.
If $h_{f,x^*}(0)=1$ and $h_{f,x^*}(j)\in\Zz_{\geq1}$ for $j=1,\dots,d$,
then $f(x^*) = 0$ and, for generic square unitary matrices $R_1,\dots,R_d$
of appropriate sizes, the following collection
of linear systems 
defined in terms of matrices $\Sigma_j$ and $\Lambda_j$
has a unique solution for all $j=1,\dots,d$:
\begin{equation}\label{eq:LocalHilbert}
\left[\begin{array}{cc}
M_{j-1}(f,x^*) & A_{j-1}(f,x^*) \\
0 & B_{j-1}(f,x^*)
\end{array}\right]\cdot \left[\begin{array}{c}
\Sigma_{j} \\
R_j\left[\begin{array}{c} \Lambda_j \\ I_{h(j)} \end{array}\right]
\end{array}\right]=0
\end{equation}
where $h(j) = h_{f,x^*}(j)$ and $I_a$ is the $a\times a$ identity matrix.
\end{corollary}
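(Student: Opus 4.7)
The plan is to establish the two claims of the corollary in turn. The first assertion, $f(x^*)=0$, is immediate from the definition: $M_0(f,x^*)=f(x^*)$ as a $k\times 1$ column vector, whose null space has dimension one precisely when $f(x^*)=0$. Hence the hypothesis $h_{f,x^*}(0)=1$ forces the conclusion.

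For the linear system~\eqref{eq:LocalHilbert}, my strategy is to exploit the block structure~\eqref{eq:MMatrix} by splitting it into the top block $M_{j-1}\Sigma_j + A_{j-1}R_j[\Lambda_j;I_{h(j)}]=0$ and the bottom block $B_{j-1}R_j[\Lambda_j;I_{h(j)}]=0$. Each column of the unknown stacked matrix is then a null vector of $M_j(f,x^*)$. Letting $N_j$ denote the number of multi-indices with $|\alpha|=j$ and $\pi:\NULL M_j(f,x^*)\to\Cc^{N_j}$ denote the projection onto those coordinates, rank--nullity applied to $\pi$ with kernel $\NULL M_{j-1}(f,x^*)\times\{0\}$ shows that $V_j:=\pi(\NULL M_j(f,x^*))$ is an $h(j)$-dimensional subspace of $\NULL B_{j-1}(f,x^*)$.

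The heart of the argument is a genericity claim about $\Lambda_j$. The bottom block forces each column of $R_j[\Lambda_j;I_{h(j)}]$ to lie in $V_j$, so equivalently, the $i$-th column of $[\Lambda_j;I_{h(j)}]$ must lie in $R_j^{-1}V_j$ for $i=1,\dots,h(j)$. Since $R_j^{-1}V_j$ has dimension $h(j)$ and the affine subset $\Cc^{N_j-h(j)}\times\{e_i\}$ has complementary codimension $h(j)$ in $\Cc^{N_j}$, a generic unitary $R_j$ makes $R_j^{-1}V_j$ transverse to $\Cc^{N_j-h(j)}\times\{0\}$, so each such fiber meets $R_j^{-1}V_j$ in a unique point, determining $\Lambda_j$ uniquely. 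With $R_j[\Lambda_j;I_{h(j)}]$ fixed, the top block becomes $M_{j-1}\Sigma_j=-A_{j-1}R_j[\Lambda_j;I_{h(j)}]$, which is solvable because, by the very definition of $V_j$, each column on the right is the image under $-A_{j-1}$ of a vector that lifts to an element of $\NULL M_j$ and hence lies in the column space of $M_{j-1}$.

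The main obstacle I anticipate is uniqueness of $\Sigma_j$ itself: any solution may be shifted by an element of $\NULL M_{j-1}$, whose dimension $\sum_{i=0}^{j-1}h(i)$ is at least one. I would resolve this by pinning $\Sigma_j$ to a canonical complement of $\NULL M_{j-1}$ (the orthogonal complement is natural given the unitary choice of $R_j$), or by interpreting the uniqueness assertion modulo this kernel. Once this convention is fixed, the genericity of $R_1,\dots,R_d$ yields a well-defined unique solution to the combined collection for $j=1,\dots,d$, which is exactly what is needed to use~\eqref{eq:LocalHilbert} as a set of algebraic conditions in the fiber product framework of Cor.~\ref{Cor:FiberProduct}.
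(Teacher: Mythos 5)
Your proof takes a genuinely different route from the paper's, which disposes of the corollary in two sentences by appealing to \cite[Thm.~2]{RankDef} together with the remark that the number of unknowns in $\Sigma_j$ and $\Lambda_j$ equals the dimension of a Grassmannian. You instead derive everything from first principles: split the block system, identify $V_j=\pi(\NULL M_j(f,x^*))$ as an $h(j)$-dimensional subspace via rank--nullity, and argue by transversality that a generic unitary $R_j$ pins down $\Lambda_j$ uniquely. This makes visible the geometry that the citation hides, and your treatment of the $\Lambda_j$-uniqueness and of the solvability of the top block is correct. One small imprecision: the bottom block by itself only forces the columns of $R_j[\Lambda_j;I_{h(j)}]$ into $\NULL B_{j-1}(f,x^*)$, which contains but may be strictly larger than $V_j$; it is the solvability of the top block (requiring each $A_{j-1}$-image to lie in the column space of $M_{j-1}$) that cuts down to $V_j$. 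Your subsequent steps use this correctly, so the slip is cosmetic.

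You have also put your finger on a genuine subtlety that the paper's terse proof elides: $\Sigma_j$ is determined only modulo $\NULL M_{j-1}(f,x^*)$, which has dimension $\sum_{i=0}^{j-1}h(i)\geq h(0)=1$. The paper's counting observation (that the number of unknowns matches the dimension of the Grassmannian of $h(j)$-planes in the full column space of $M_j$) does not by itself deliver literal uniqueness of the pair $(\Sigma_j,\Lambda_j)$, since the number of independent linear constraints in \eqref{eq:LocalHilbert} is the rank of $M_j$ times $h(j)$, which falls short of the unknown count by $(\dim\NULL M_{j-1})\cdot h(j)$. Consistent with this, the paper's own instantiation \eqref{eq:JacobianMultEx} with $j=1$ simply omits $\Sigma_1$, as it is annihilated by $M_0=0$. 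Your proposed resolutions --- pinning $\Sigma_j$ to a complement of $\NULL M_{j-1}$, or reading the uniqueness claim modulo that kernel --- are the right refinements, and it is the uniqueness of $\Lambda_j$ (equivalently, of the $h(j)$-dimensional subspace of $\NULL M_j$ complementary to $\NULL M_{j-1}$) that carries the content needed for the fiber-product framework of Cor.~\ref{Cor:FiberProduct}.
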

\begin{proof}
The result follows immediately from \eqref{eq:MMatrix}, 
the definition of the local Hilbert function,
and \cite[Thm.~2]{RankDef}.  In particular,
the matrix $R_j\left[\begin{array}{c} \Lambda_j \\ I_{h(j)} \end{array}\right]$ has rank $h(j) = h_{f,x^*}(j)$
and thus \eqref{eq:LocalHilbert} yields
$h(j)$ independent null vectors that are not 
contained in the null space of $M_{j-1}(f,x^*)$ as required.
Uniqueness follows from~\cite[Thm.~2]{RankDef} where the 
total number
of unknowns in $\Sigma_j$ and~$\Lambda_j$ is precisely the dimension of the corresponding 
Grassmannian.
\end{proof}

In order to enforce various local Hilbert functions separately at several points, Cor.~\ref{co:Multiplicity} can be applied 
individually for each point and then all such systems can be collected together.
To enforce multiplicity of a component, one applies a local Hilbert
function condition at each of the witness points separately
with respect to the polynomial system and slicing system together.  
Then, one simply takes fiber products resulting from the component systems
in Cor.~\ref{co:Multiplicity} stacked together via Thm.~\ref{thm:BasicFiberProduct} and Cor.~\ref{Cor:FiberProduct}.
Additionally, if one aims to enforce a Hilbert function of a zero-scheme, this approach can naturally be generalized following~\cite{GHPS}.

For perturbed parameter values $\hat{p}$, one is looking for
solutions to $f(x;\hat{p})=0$ for which the corresponding
Macaulay matrices are nearly rank deficient.
This can be determined using numerical rank revealing methods
such as the singular value decomposition to determine appropriate
null space conditions to apply.  

\subsection{Illustrative example}\label{ex:Multiplicity}

Similar to Section \ref{ex:WitnessPoints}, computing a numerical irreducible decomposition of
\begin{equation}\label{eq:f_multiplicity}
f(x;p) = \left[
\begin{array}{c}
     f_1\\
     f_2
\end{array}
\right]=
\left[
\begin{array}{c}
     x_1^3 - 2 p_1 x_1^2 - 2 x_1^2 + p_1^2 x_1 + 4 p_1 x_1 - p_1^2 - p_2 \\
     x_1^2 x_2 - 2 x_1^2 - 2 p_1 x_1 x_2 + 4 p_1 x_1 + p_1^2 x_2 - p_1^2 - p_2
\end{array}
\right],
\end{equation}
with $\tilde{p} = (1,1)$, has a line and an isolated point in the solution space. However, for this system, the line has multiplicity two.
When these parameters are perturbed, say with $\mathcal{N}(0, 0.1^2)$ error yielding $\hat{p} = (1.2346, 1.0089)$ to 4 decimal places, the point and line structure breaks into three isolated points. In this case, we want to recover nearby parameters giving the special structure of a one-dimensional line with multiplicity two and an isolated point. As in Section \ref{ex:WitnessPoints}, we randomize to a single equation and add a slice. After solving
\begin{equation}\label{eq:Multiplicity_Sliced}
f_R(x;p) = \left[
\begin{array}{c}
    Rf(x;p)\\
    L(x)
\end{array}
\right] = \left[
\begin{array}{c}
     f_1 + \Box f_2 \\
     \Box x_1 + \Box x_2 + \Box
\end{array}
\right] = 0,
\end{equation}
we choose one of the two solutions near the one-dimensional line. To recover the multiplicity at this component, we add the condition outlined in Cor.~\ref{co:Multiplicity}.  In particular,
as mentioned in Section~\ref{sec:ExpMult},
since the randomized system having multiplicity 2 implies
the original system has multiplicity 2, we simply work
with the randomized system with \eqref{eq:LocalHilbert}
corresponding to
\begin{equation}\label{eq:JacobianMultEx}
Jf_R(x;p)\cdot R_1\cdot \left[\begin{array}{c} \lambda \\1   \end{array}\right] = 0.
\end{equation}
Hence, $\sF$ consists of the polynomials in \eqref{eq:Multiplicity_Sliced} and \eqref{eq:JacobianMultEx}.  
Using a gradient descent
homotopy~\eqref{eq:GradDescentHomotopy} with homogenized
Lagrange multipliers yields the recovered parameters listed
in Table~\ref{table:Multiplicity_Parmaeters}
and pictorially represented in Fig.~\ref{fig:points_multiplicity}.

Similar to Section~\ref{ex:WitnessPoints},
we repeated this process with 500 samples from a bivariate Gaussian distribution centered at the initial parameter values $\tilde{p}=(1,1)$ with 
covariance matrix $\Sigma = 0.1^2 I_2$.
The results of this experiment are summarized in Fig.~\ref{fig:points_multiplicity}.  
For this simple problem, it is easy to verify that all
recovered parameter values lie along the parabola
$V(p_1^2-p_2)$.  Figure~\ref{fig:histogram_multiplicity}
shows histograms of the marginal distributions 
for $p_1$, $p_2$, and along an intrinsic parameterization
of the tangent line to the parabola at $\tilde{p}$.  

\begin{table}[htb]
\caption{Initial (exact), perturbed (4 decimals), and recovered (4 decimals) parameter values}
\label{table:Multiplicity_Parmaeters}
\centering
    \begin{tabular}[2in]{cccc}
    \toprule
    Parameter & Initial ($\tilde{p}$)& Perturbed ($\hat{p}$)& Recovered ($p^*$)\\
    \midrule
    $p_1$ & 1  &  1.2346 &  1.0479 \\
    $p_2$ & 1  &  1.0089 &  1.0980 \\
    \bottomrule
    \end{tabular}
\end{table}

\begin{figure}[htb]
    \centering
    \begin{tabular}{cc}
    \includegraphics[scale=0.88]{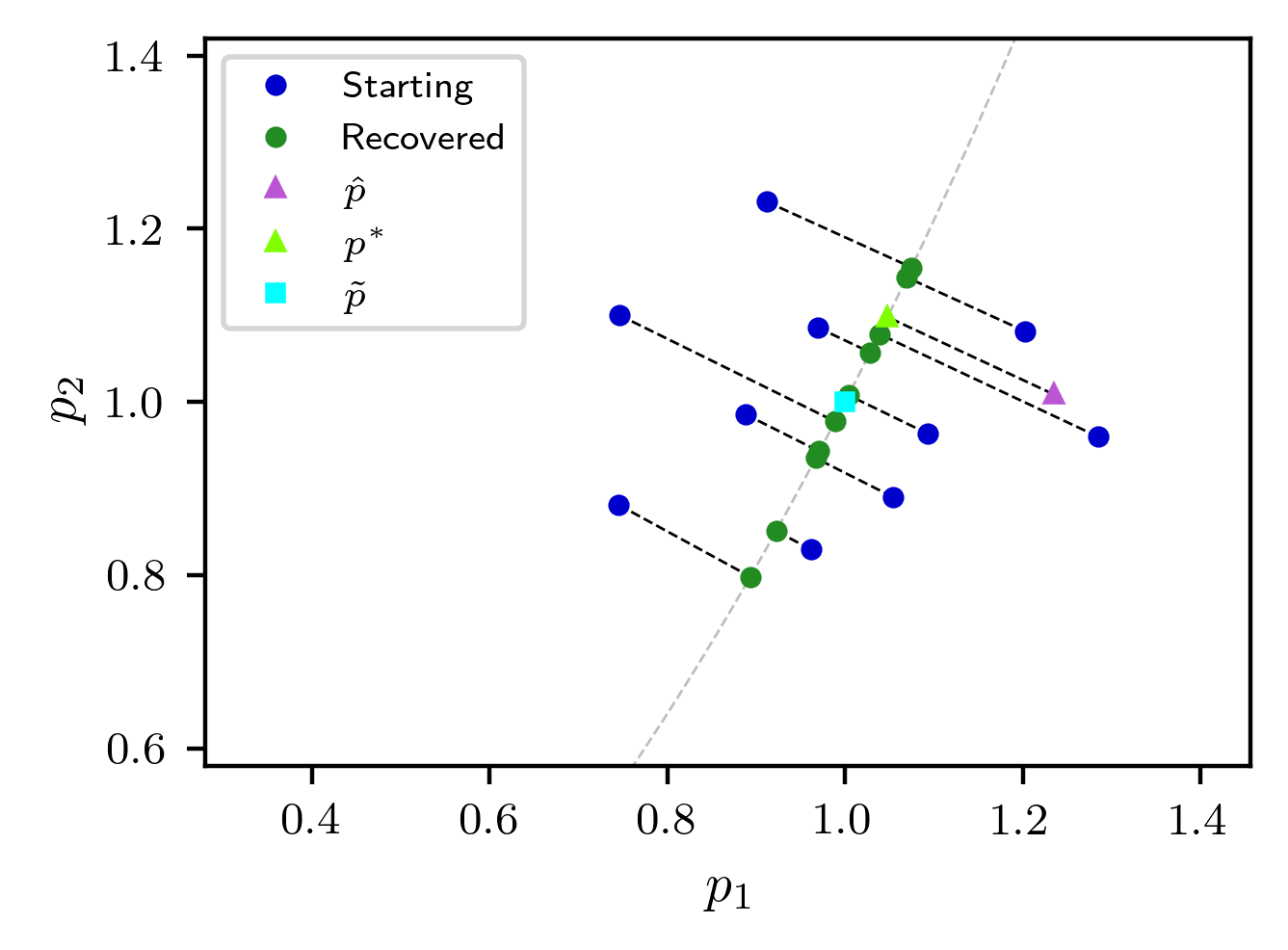} &
    \includegraphics[scale=0.88]{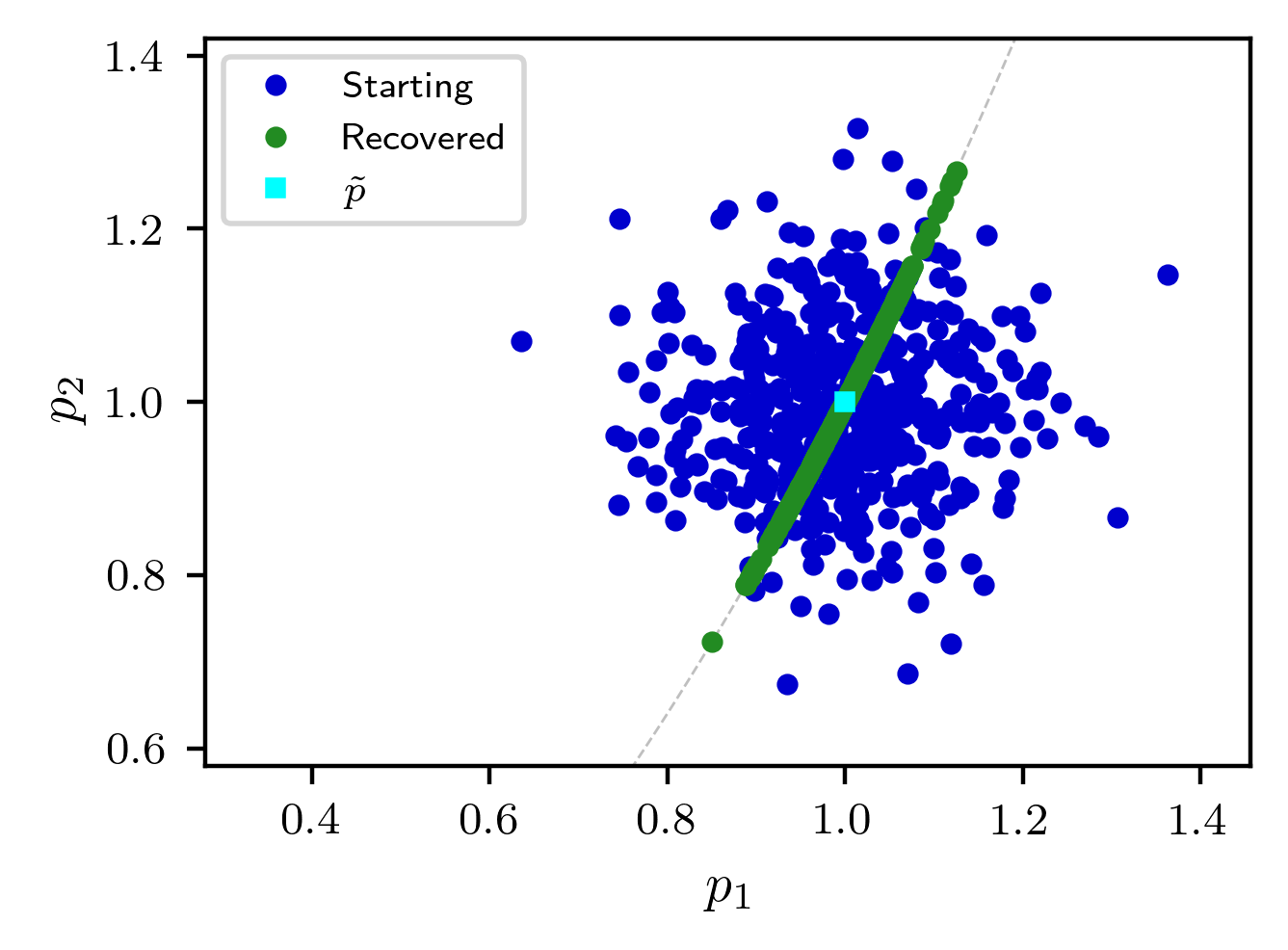}\\
    (a) & (b) \\
    \end{tabular}
    \caption{(a) Illustration of recovering parameters for various
    perturbations including the example summarized in Table~\ref{table:Multiplicity_Parmaeters};
    (b) Illustration using 500 samples}
    \label{fig:points_multiplicity}
\end{figure}

\begin{figure}[htb]
    \centering
    \begin{tabular}{ccc}
    \includegraphics[scale=0.62]{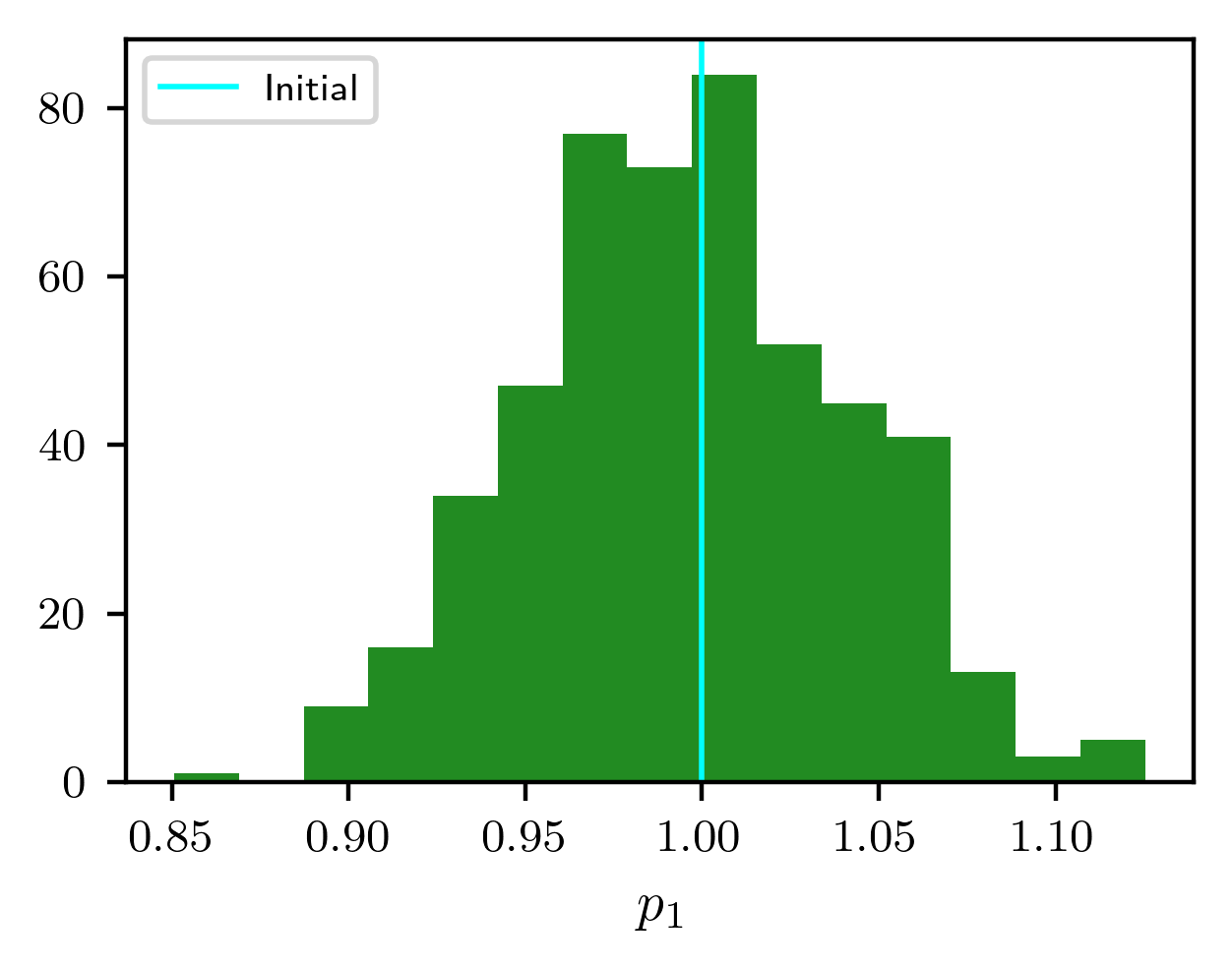} &
    \includegraphics[scale=0.62]{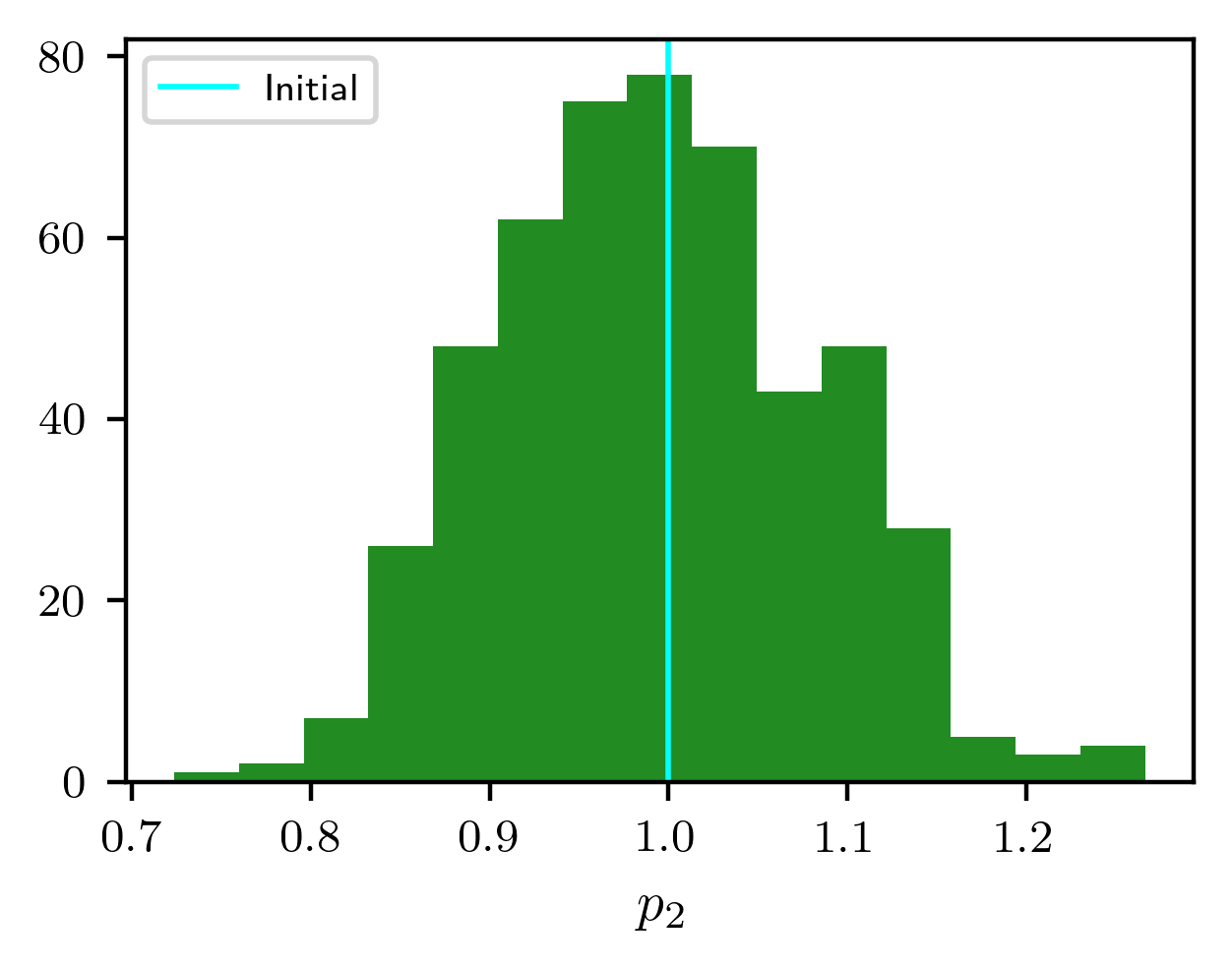} & \includegraphics[scale=0.62]{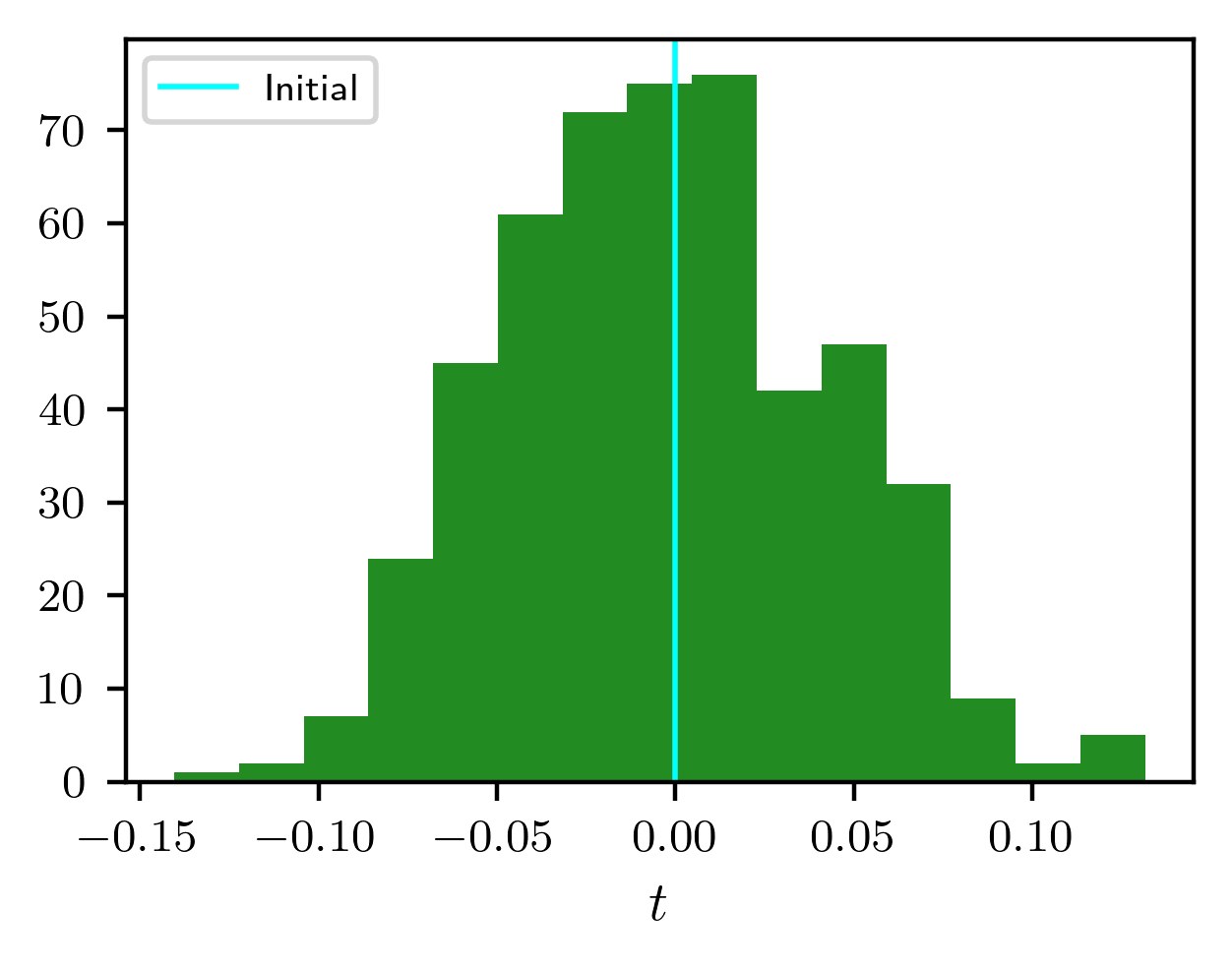} \\
    (a) & (b) & (c) \\
    \end{tabular}
    \caption{Histograms for (a) $p_1$, (b) $p_2$, and (c) intrinsic parameterizing coordinate along tangent line for recovered parameter values from $500$ samples}
    \label{fig:histogram_multiplicity}
\end{figure}

\section{Kinematic examples}\label{sec:Examples}
The examples in Sections \ref{sec:FiniteRoots}--\ref{sec:HigherMult} were 
designed for illustrative purposes.  The following
considers three examples derived
from the field of kinematics.

\subsection{Decomposable 4-bar coupler curve}\label{sec:4bar}
Consider the 4-bar linkage given by the parameterized family of polynomial systems
\begin{equation}\label{eq:4Bar}
    f(x;p) = \left[\begin{array}{c}
    x_1^2 + x_2^2 - p_1^2\\
    (x_3-p_2)^2 + x_4^2 - p_3^2\\
    (x_1-x_3)^2 + (x_2-x_4)^2 - p_4^2
    \end{array}\right].
\end{equation}
For generic $p\in\Cc^4$, $V(f(x;p))$ is an irreducible sextic $(d=6)$ curve. It is known that $p_1=p_3$ and $p_2=p_4$
yields a parallelogram linkage and the solution set factors into a quadratic and quartic curve. 
Considering initial parameter values $\tilde{p}=(1,2,1,2)$, 
we perturbed the parameters with $\mathcal{N}(0, 0.01^2)$ error 
yielding $\hat{p} = (1.0025, 2.0101, 1.0098, 2.0014)$ 
rounded to four decimals. 
For illustration, Fig.~\ref{fig:4Bar} shows the solution set 
projected into $(x_1, x_4)$ space.
\begin{figure}[!ht]
    \centering
    \begin{tabular}{cc} 
    \includegraphics[scale=0.88]{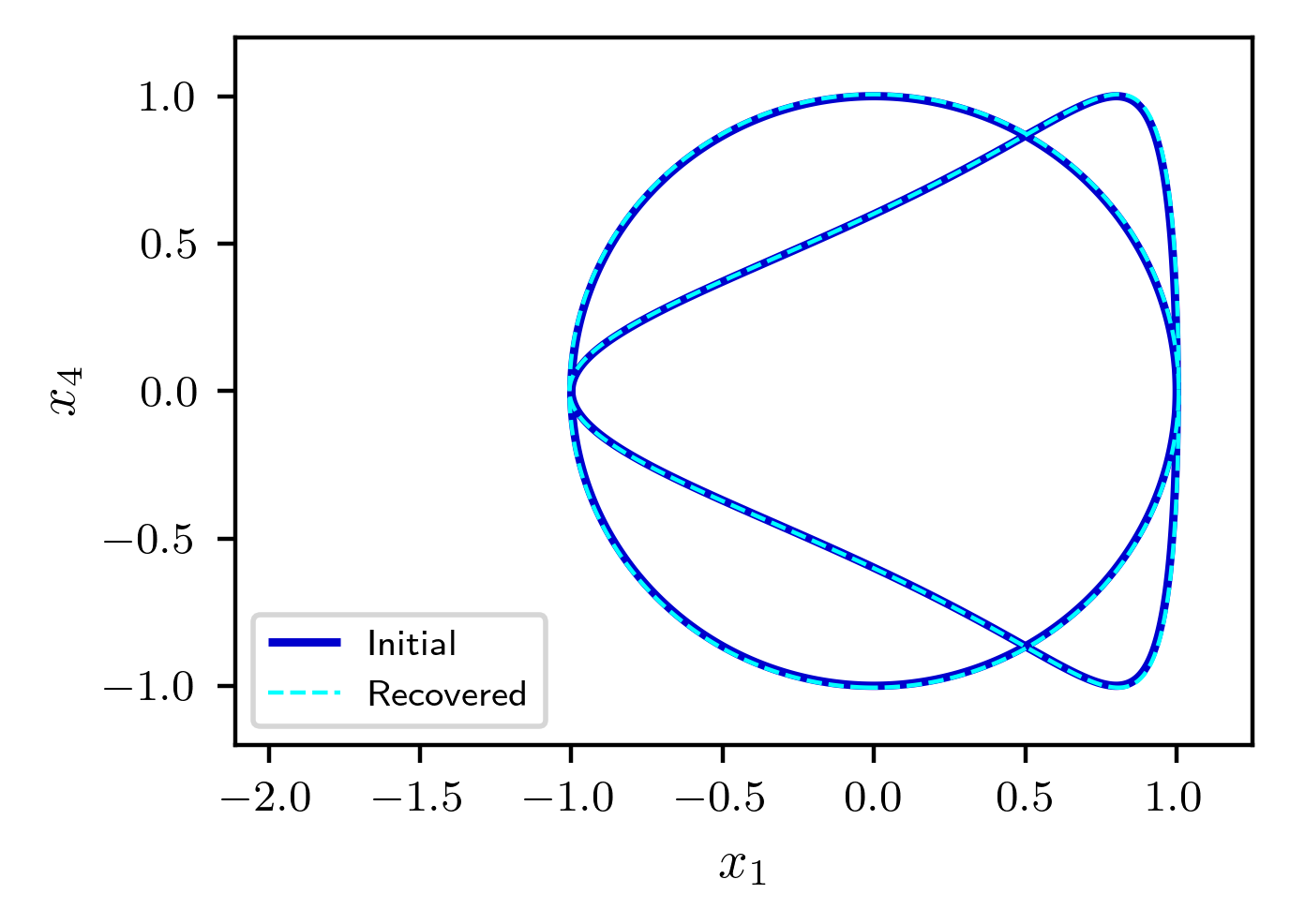} &
    \includegraphics[scale=0.88]{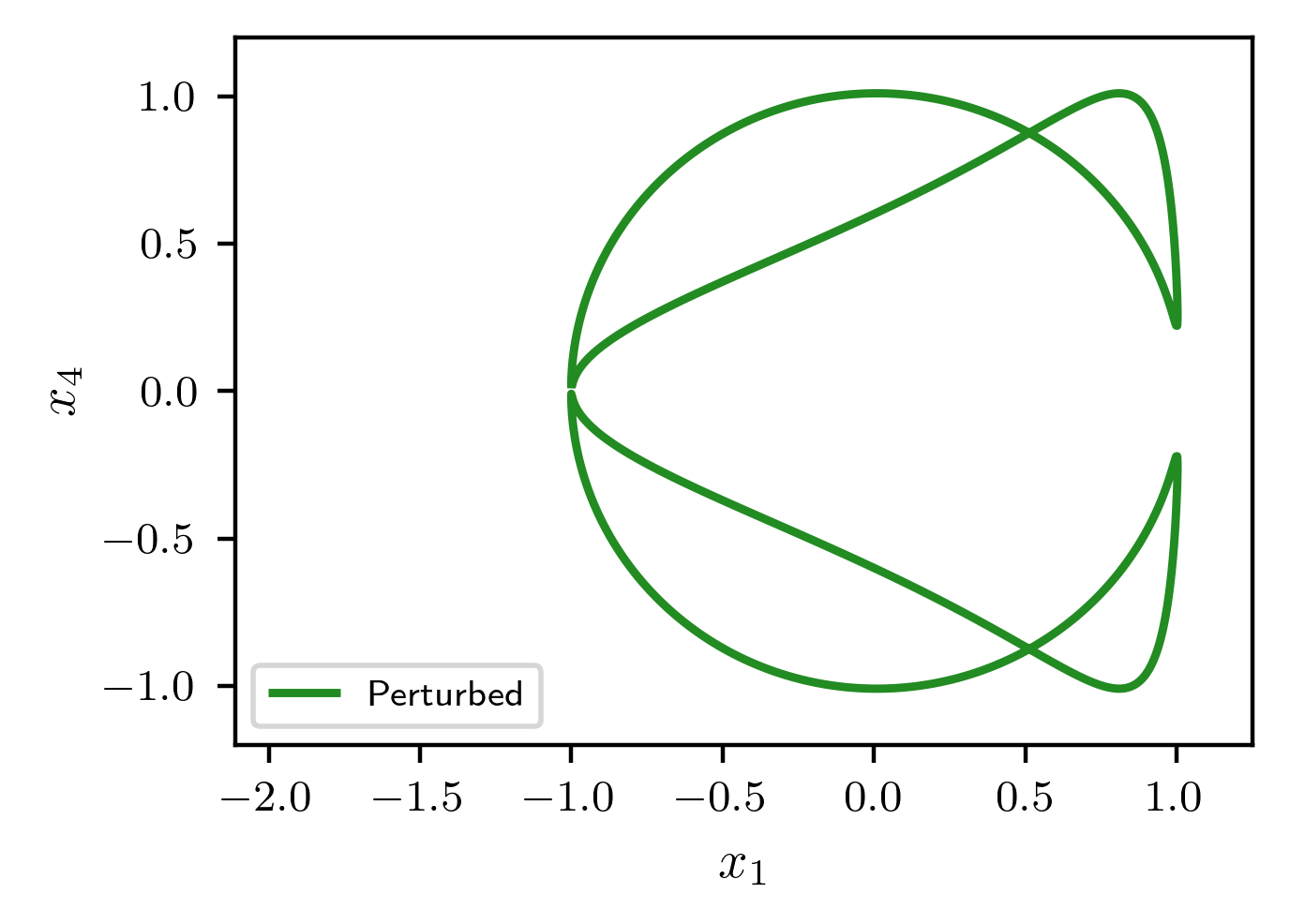} \\
    (a) & (b) \\
    \end{tabular}
    \caption{Projections of the 4-bar coupler curve in $(x_1,x_4)$ space corresponding to (a) initial and recovered parameters, and (b) perturbed parameters}
    \label{fig:4Bar}
\end{figure}

For the perturbed parameter values $\hat{p}$, the
sextic curve does not factor, following Section~\ref{ex:Irreducible}, 
linear traces are close to zero for collections of 
$r=2$ and $d-r=4$ points.  Thus, we aim to apply
Cor.~\ref{co:Irreducible} to recover parameters $p^*$ near $\hat{p}$ such that the solution set~$V(f(x;p^*))$ factors into a quadratic curve and a quartic curve using the second derivative trace test. 
Table~\ref{table:4Bar_LinearAlgebra} summarizes the results
of applying Remark~\ref{remark:Lemma3Dimension} to determine the number of component systems needed, namely the dimension stabilizes
with two systems.  The corresponding system sizes are also reported in 
Table~\ref{table:4Bar_LinearAlgebra}, where the systems are square via homogenized Lagrange multipliers. 
Using a gradient descent homotopy~\eqref{eq:GradDescentHomotopy} with two component systems, the recovered parameters~$p^*$ are reported in Table~\ref{table:4Bar_Parmaeters} to 4 decimals
and one clearly sees the parallelogram linkage structure is recovered.
The resulting
decomposable solution set~$V(f(x;p^*))$ is illustrated in Fig.~\ref{fig:4Bar}(a).

\begin{table}[htb]
\caption{Dimensions and system size for different number of component systems}
\label{table:4Bar_LinearAlgebra}
\centering
    \begin{tabular}[2in]{ccc}
    \toprule
    Component Systems &  Dimension & System Size\\ 
    \midrule
    1 & 3 & \,\,\,53\\
    2 & 2 & 102\\
    3 & 2 & 151\\
    4 & 2 & 200\\
    \bottomrule
    \end{tabular}
\end{table}

\begin{table}[htb]
\caption{Initial (exact), perturbed (4 decimals), and recovered (4 decimals) parameter values}
\label{table:4Bar_Parmaeters}
\centering
    \begin{tabular}[2in]{cccc}
    \toprule
    Parameter & Initial ($\tilde{p}$) & Perturbed ($\hat{p}$) & Recovered ($p^*$) \\
    \midrule
    $p_1$ & 1  & 1.0025  & 1.0062 \\
    $p_2$ & 2  & 2.0101  & 2.0057 \\
    $p_3$ & 1  & 1.0098  & 1.0062 \\
    $p_4$ & 2  & 2.0014  & 2.0057 \\
    \bottomrule
    \end{tabular}
\end{table}

\subsection{Stewart-Gough platform}\label{sec:SGplatform}
A Stewart-Gough platform consists of two bodies, a base and an end-plate, connected by six legs as illustrated in Fig.~\ref{fig:SGplatform}.
For $j=1,\ldots,6$, the $j^{\rm th}$ leg imposes a square
distance $d_j$ between point $a_j\in\Rr^3$ of the base and point $b_j\in\Rr^3$ of the end-plate.
Letting ``$*$'' denote quaternion multiplication and letting $v'$ denote the quaternion conjugate of $v$,
the leg constraints may be written as follows, for $j=1,\ldots,6$,
\begin{multline}\label{eq:SG}
    f_j(e,g;a,b,d) = (a_j * a_j' + b_j * b_j' - d_j)e*e' - e*b_j*e'*a_j'*a_j*e*b_j'*e'\\
    + g*b_j'*e' + e*b_j*g' - g*e'*a_j' - a_j*e*g' + g*g' = 0
\end{multline}
where $e,g$ are quaternions in a Study coordinate representation of the position and orientation of the end-plate. 
Hence, $e,g$ must satisfy the Study quadric
\begin{equation}\label{eq:study}
    Q(e,g) = g_0 e_0 + g_1 e_1 + g_2 e_2 + g_3 e_3 = 0.
\end{equation}
In this example, we set $e_0=1$ to dehomogenize the system. For generic parameters \mbox{$p = (a, b, d)$},
this platform can be assembled in 40 rigid configurations over the complex numbers.  That is, the solution set of the parameterized polynomial system resulting from the 6 leg constraints in \eqref{eq:SG} and Study quadratic in \eqref{eq:study} consists of 40 isolated points. However, for $\tilde{p}$ reported in Table~\ref{table:SG_Parameters} in 
Appendix~\ref{App} derived from \cite[Ex.~2.2]{HSW},
this platform moves in a circular motion
as illustrated in Fig.~\ref{fig:SGplatform}. 
In particular, this circular motion corresponds
to the solution set containing a quadratic curve.
To apply the robustness framework, we consider
a slight perturbation using $\mathcal{N}(0,(10^{-9})^2)$ error yielding $\hat{p}$ in Table~\ref{table:SG_Parameters}
for which the platform becomes rigid.

\begin{figure}[htb]
    \centering
    \includegraphics[scale=0.62]{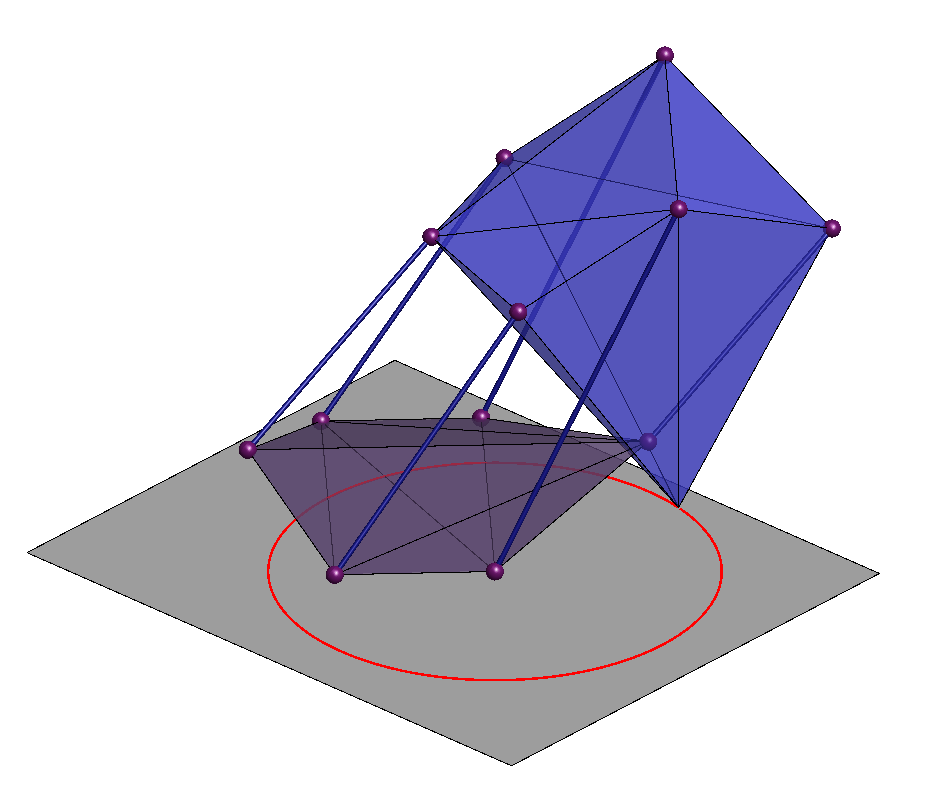} \\
    \caption{A Stewart-Gough platform. The $z=0$ plane (gray) contains the circular path~(red) corresponding to one point of the end-plate}
    \label{fig:SGplatform}
\end{figure}

Following Section \ref{ex:WitnessPoints}, we aim to find $p^*$ near $\hat{p}$ for which the solution set contains a quadratic curve.
First, we consider a randomization of the 6 leg constraints 
down to 5 conditions, the Study quadric, and a linear slice, namely
\begin{equation}
    f_R(e,g;p) = \left[\begin{array}{c}
    f_1 + \Box f_6\\
    f_2 + \Box f_6\\
    f_3 + \Box f_6\\
    f_4 + \Box f_6\\
    f_5 + \Box f_6\\
    Q \\
    \Box e_1 + \Box e_2 + \Box e_3 + \Box g_0 + \Box g_1 + \Box g_2 + \Box g_3 + \Box
    \end{array}\right].
\end{equation}
For $f_R(e,g;\hat{p})$, there are two solutions for which the leg constraints in~\eqref{eq:SG} are close to vanishing, consistent with a degree 2 component having 2 witness points, and
38 solutions which are not close to vanishing.
Using Cor.~\ref{co:WitnessPoints} with $d=2$ to construct the fiber product system, we apply Remark~\ref{remark:Lemma3Dimension} which indicates 4 component systems are needed. The dimensions and corresponding square system sizes via homogenized Lagrange multipliers are reported in Table~\ref{table:SG_LinearAlgebra}. 
Tracking the corresponding gradient descent homotopy~\eqref{eq:GradDescentHomotopy}, 
the recovered parameter values $p^*$ are reported in Table~\ref{table:SG_Parameters} in Appendix~\ref{App}
for which the corresponding Stewart-Gough platform has regained 
its motion.

\begin{table}[htb]
\caption{Dimension and system size for different number of component systems}
\label{table:SG_LinearAlgebra}
\centering
    \begin{tabular}[2in]{ccc}
    \toprule
    Component Systems &  Dimension & System Size\\ 
    \midrule
    1 & 40 & \,\,\,72\\
    2 & 38 & 102\\
    3 & 36 & 132\\
    4 & 35 & 162\\
    5 & 35 & 192\\
    \bottomrule
    \end{tabular}
\end{table}

\subsection{Family containing the 6R inverse kinematics problem}\label{sec:6R}

The inverse kinematics problem for six-revolute (6R) mechanisms seeks to determine all
ways to assemble a loop of six rigid links connected serially by revolute joints. One formulation~\mbox{\cite{morgan1987mHomogeneous,tsai1985solving6R}} sets the problem as a member
of the following parameterized system of eight quadratics using 
a 2-homogeneous construction:
\begin{equation}\label{eq:6R}
    f(x;a) = \left[\begin{array}{c}
        f_0(x;a)\\
        f_1(x;a)\\
        f_2(x;a)\\
        f_3(x;a)\\
        x_1^2 + x_2^2 - x_0^2\\[2pt]
        x_5^2 + x_6^2 - x_0^2\\[2pt]
        x_3^2 + x_4^2 - x_9^2\\[2pt]
        x_7^2 + x_8^2 - x_9^2
    \end{array}\right],
\end{equation}
where $f_j$ has the form
\begin{multline}
    f_j(x;a) = a_{j0} x_1 x_3 + a_{j1} x_1 x_4 + a_{j2} x_2 x_3 + a_{j3} x_2 x_4 + a_{j4} x_5 x_7 + a_{j5} x_5 x_8\\
    {}+ a_{j6} x_6 x_7 + a_{j7} x_6 x_8 + a_{j8} x_1 x_9 + a_{j9} x_2 x_9 + a_{j10} x_3 x_0 + a_{j11} x_4 x_0\\
    {}+ a_{j12} x_5 x_9 + a_{j13} x_6 x_9 + a_{j14} x_7 x_0 + a_{j15} x_8 x_0 + a_{j16} x_0 x_9,
\end{multline}
with $x_0$ and $x_9$ as the homogenizing coordinates. 
In particular, this system is defined on~\mbox{$\Pp^4\times\Pp^4$} with corresponding variable sets
$\{x_0,x_1,x_2,x_5,x_6\}\times\{x_3,x_4,x_7,x_8,x_9\}$. 
We dehomogenize the system by solving on the affine
patches defined by $x_1=1$ and $x_3=1$.

With $a$ consisting of 68 values, we take the parameters
as the $32$ values in $a$ associated with monomials
that do not vanish at infinity, i.e., $V(x_0)\cup V(x_9)$, namely $a_{j0},\dots,a_{j7}$
for $j=0,\dots,3$.
We fix as constants the $36$ values in $a$ associated
with monomials that vanish at infinity,
namely $a_{j8},\dots,a_{j16}$
for $j=0,\dots,3$.  Table~\ref{table:6R_ParametersFixed}
in Appendix~\ref{App} contains the values of these constants
used in the computations.  
For generic parameters, 
the resulting system has 64 finite solutions.  
However, for parameters that correspond with a 6R problem,
the system should only have 32 finite solutions.  
Thus, to utilize the robustness framework, we truncated 
$a\in\Cc^{68}$ corresponding to an actual 6R problem 
using single precision.  Hence, the constants in 
Table~\ref{table:6R_ParametersFixed} are listed in single precision
and the parameter values in Table~\ref{table:6R_Parameters} 
are listed in both single precision (corresponding
to the perturbed parameters) and double precision (corresponding
to the initial parameters).\footnote{
See \url{https://bertini.nd.edu/BertiniExamples/inputIPP_1024}
for values in 1024-bit precision.}
Solving the system with the perturbed parameter values
results in $64$ points corresponding to finite solutions
that are clustered into three groups:
$16$ having $|x_0|$ close to zero,
$16$ having $|x_9|$ close to zero, and
$32$ having both~$|x_0|$ and $|x_9|$ far from zero 
as illustrated in Fig.~\ref{fig:6R_LogPlot}.

\begin{figure}
    \centering
    \includegraphics[scale=0.91]{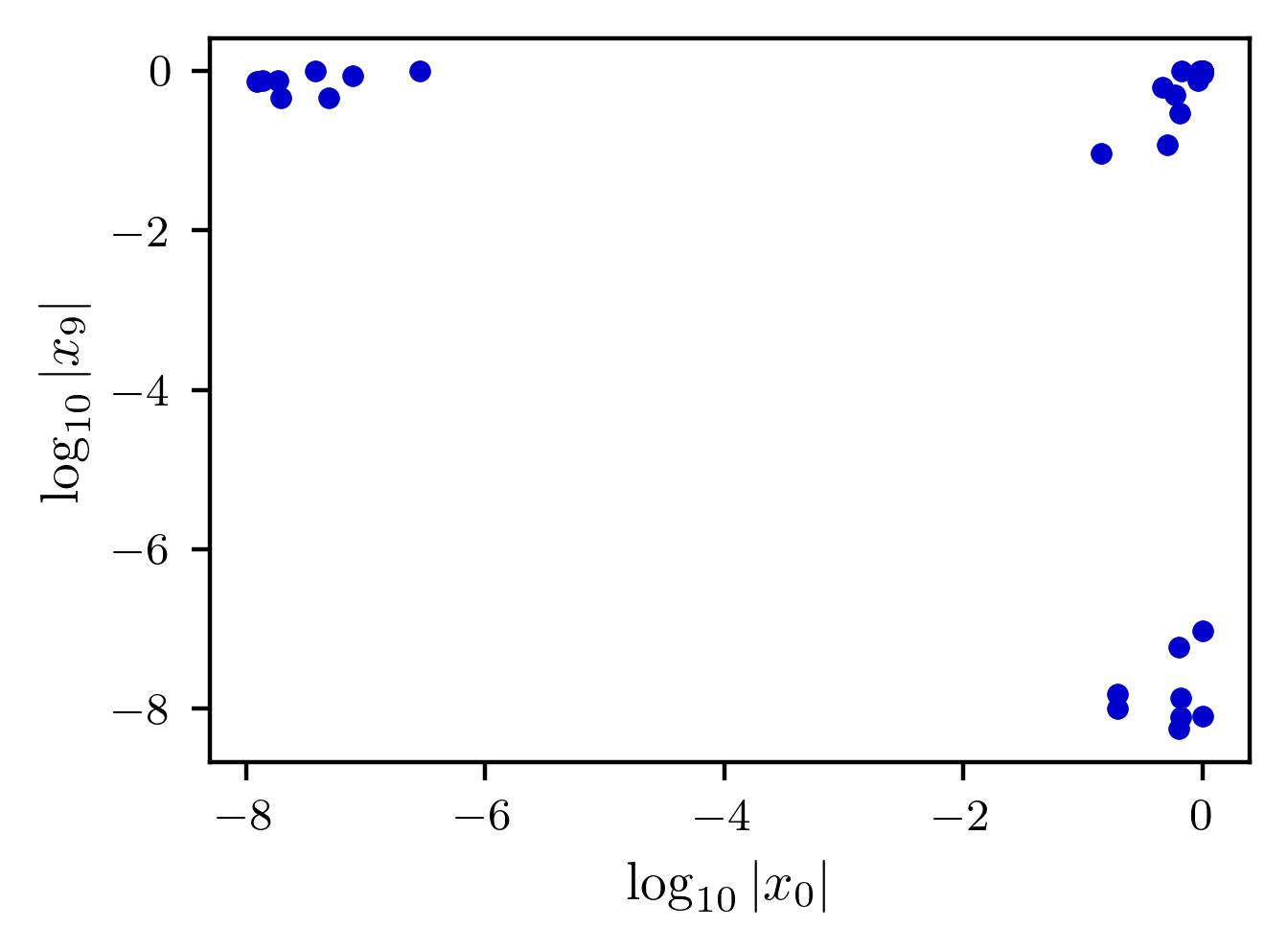}
    \caption{Logarithmic plot of absolute values of homogenizing coordinates for the $64$ solutions}
    \label{fig:6R_LogPlot}
\end{figure}

First, suppose that we aim to recover parameters
by forcing the $16$ solutions with $|x_0|$ close to zero
to be at infinity, i.e., actually satisfy $x_0=0$.
The fiber product system is constructed following
Cor.~\ref{co:SolAtInf}.
Since the solutions are not necessarily independent of each other, 
we utilized Remark~\ref{remark:Lemma3Dimension}
applied to the system with 16 component systems and observed
that there were actually 4 unnecessary conditions,
i.e., only 12 component systems in the fiber product are necessary,
which was confirmed by a gradient descent homotopy~\eqref{eq:GradDescentHomotopy}.

It is the same story if one aims 
to recover parameters
by forcing the $16$ solutions with~$|x_9|$ close to zero
to be at infinity.
So, now suppose that we aim to recover parameters
by forcing both sets of $16$ solutions with either $|x_0|$ 
or $|x_9|$ close to zero to be at infinity.  
Then, applying Remark~\ref{remark:Lemma3Dimension}, we see
that these are not independent and only need $23$
fiber products.  Thus, when pushing these 
$32$ solutions to infinity, we take $12$ for one 
of the infinities and only $11$ for the other.  
After adding homogenized Lagrange multipliers, 
this results in a square system of size 423.
Tracking the gradient descent homotopy~\eqref{eq:GradDescentHomotopy}
yields the recovered parameters reported in Table \ref{table:6R_Parameters} of Appendix~\ref{App}.
Solving with the recovered parameters shows that
all~$32$ of these solutions are pushed back to infinity
as illustrated in Fig.~\ref{fig:6R_ProjectedValues}.

\begin{figure}
    \centering
    \begin{tabular}{cc}
    \includegraphics[height=2.3in]{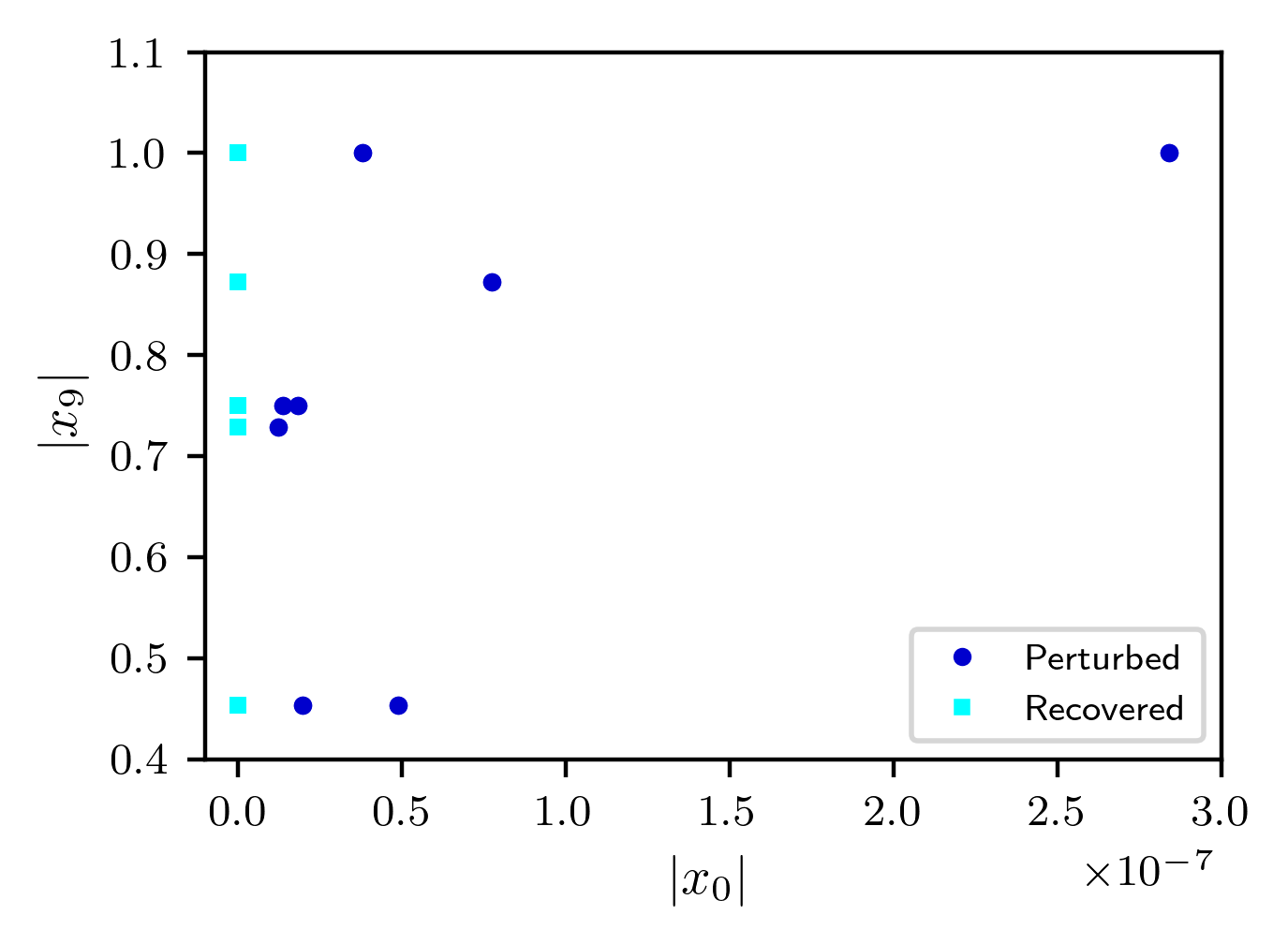} &
    \includegraphics[height=2.3in]{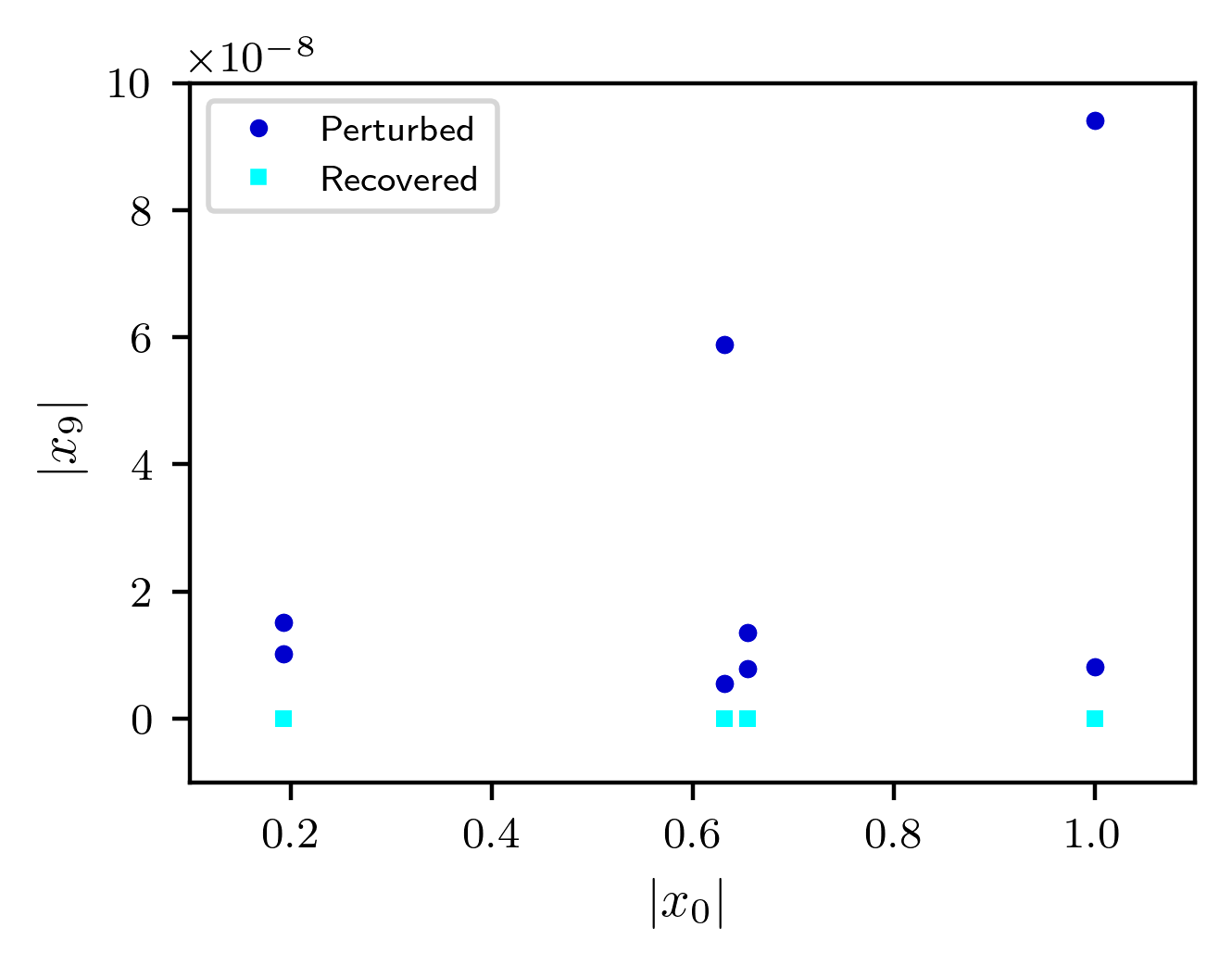} \\
    (a) & (b) \\
    \end{tabular}
    \caption{Absolute values of the homogenizing coordinates for (a) 
    $16$ solutions associated with $x_0=0$ and 
    (b) $16$ solutions associated with $x_9=0$}
    \label{fig:6R_ProjectedValues}
\end{figure}

\section{Conclusion}\label{sec:Conclusion}

After proposing a framework for robustness in numerical algebraic geometry based on fiber products, 
this framework was applied to a collection
of scenarios including fewer finite solutions, 
existence of higher-dimensional components, 
components that further decompose into irreducible components,
and solution sets of higher multiplicity.  
For the cases considered here, with information about the 
questionable structural element already identified,
local optimization techniques applied to fiber products
produced nearby points in parameter space where the
special structure exists.  Moreover, in all of the examples
presented here, we aimed to recover real parameter values
using a gradient descent homotopy based on 
homogenized Lagrange multipliers associated with the Euclidean distance.
For nonreal parameter values, one can utilize isotropic
coordinates.  Also, there are many other optimization
approaches one may use to recover parameter values on exceptional
sets and these other approaches could expand the size of the local convergence zone possibly allowing to recover parameters from larger perturbations.  
Finally, one can adjust the distance metric used
such as one based on knowledge about the relative size of the parameters and their~uncertainties.

Figures~\ref{fig:PostDim_Histograms} and~\ref{fig:histogram_multiplicity}
provide histograms of the recovered parameters arising from
perturbations centered at a parameter value on an exceptional set.
As described in Section~\ref{ex:WitnessPoints}, 
perturbations using a Gaussian distribution will yield 
a Gaussian distribution when projecting onto a linear space.
When projecting onto a nonlinear space, such as in Section~\ref{ex:Multiplicity}, this yields approximately 
a Gaussian distribution on the tangent space.  
Further statistical analysis regarding recovered 
parameters is warranted.

Finally, we note that the codimension of an exceptional set in parameter space may
be less than the number of conditions one seeks to impose on the solution set. 
For example, in the 6R problem of Section~\ref{sec:6R}, one might naively expect
the exceptional set for sending~32 points to infinity to be 
codimension~32,
but in fact, it is codimension 23. While we have found this result numerically,
it raises the more general question of how such results can be understood
using the tools of algebraic geometry.

\section*{Acknowledgment}

The authors were supported in part by
the National Science Foundation CCF-2331440 (ERC and JDH)
and CMMI-2041789 (JDH), 
the Robert and Sara Lumpkins Collegiate Professorship (JDH),
and the Huisking Foundation, Inc. Collegiate Research
Professorship (CWW).

\bibliographystyle{abbrv}
\bibliography{refs,ref_jon}

\appendix
\section{Appendix}\label{App}

The following provides tables associated with
Sections~\ref{sec:SGplatform} and~\ref{sec:6R}.

\begin{table}[htb]
\caption{Initial (exact), perturbed (12 decimals), and recovered (12 decimals) parameters}
\label{table:SG_Parameters}
\centering
\scriptsize
    \begin{tabular}[2in]{cccc}
    \toprule
    Parameter & Initial $(\tilde{p})$ & Perturbed $(\hat{p})$ & Recovered $(p^*)$ \\
    \midrule
    $a_{1_x}$ & 0.0000 & 0.000000000251 & 0.000000000320 \\
    $a_{1_y}$ & 0.0000 & 0.000000001013 & 0.000000000982 \\
    $a_{1_z}$ & 0.0000 & 0.000000000980 & 0.000000000477 \\
    $b_{1_x}$ & 0.0000 & --0.000000000200\,\,\, & --0.000000000269\,\,\, \\ 
    $b_{1_y}$ & 0.0000 & --0.000000000637\,\,\, & --0.000000000606\,\,\, \\
    $b_{1_z}$ & 1.5000 & 1.499999998979 & 1.499999999482 \\
    $d_{1}$   & 3.2500 & 3.249999999891 & 3.249999999724 \\
    
    $a_{2_x}$ & 1.0000 & 1.000000000136 & 1.000000000272 \\
    $a_{2_y}$ & 0.0000 & --0.000000000753\,\,\, & --0.000000001236\,\,\, \\ 
    $a_{2_z}$ & 0.2500 & 0.249999998300 & 0.249999998434 \\
    $b_{2_x}$ & 1.0000 & 1.000000001806 & 1.000000001671 \\
    $b_{2_y}$ & 0.0000 & --0.000000000886\,\,\, & --0.000000000402\,\,\, \\
    $b_{2_z}$ & 1.0000 & 0.999999999658 & 0.999999999525 \\
    $d_{2}$   & 1.5625 & 1.562499999151 & 1.562499999240 \\
    
    $a_{3_x}$ & 1.0000 & 0.999999999712 & 0.999999999839 \\
    $a_{3_y}$ & 1.0000 & 0.999999999733 & 0.999999999918 \\
    $a_{3_z}$ & 0.0000 & 0.000000000109 & --0.000000000010\,\,\, \\ 
    $b_{3_x}$ & 1.0000 & 0.999999998769 & 0.999999998641 \\
    $b_{3_y}$ & 1.0000 & 0.999999999125 & 0.999999998940 \\
    $b_{3_z}$ & 1.5000 & 1.499999999413 & 1.499999999531 \\
    $d_{3}$   & 3.2500 & 3.250000000389 & 3.250000000350 \\

    $a_{4_x}$ & --0.5000\,\,\,  & --0.500000000115\,\,\, & --0.499999999750\,\,\, \\
    $a_{4_y}$ &  0.5000  & 0.500000000098 & 0.500000000171 \\
    $a_{4_z}$ &  0.0000 & 0.000000000167 & 0.000000000893 \\
    $b_{4_x}$ & --0.5000\,\,\,  & --0.499999998762\,\,\, & --0.499999999127\,\,\, \\
    $b_{4_y}$ &  0.5000  & 0.499999999424 & 0.499999999351 \\
    $b_{4_z}$ &  1.0000  & 1.000000000799 & 1.000000000073 \\
    $d_{4}$   &  2.0000  & 2.000000000415 & 2.000000000779 \\

    $a_{5_x}$ &  0.5000  & 0.500000000717 & 0.499999999761 \\
    $a_{5_y}$ &  1.5000  & 1.500000000939 & 1.500000000405 \\
    $a_{5_z}$ &  0.0000  & 0.000000000105 & 0.000000000622 \\
    $b_{5_x}$ &  0.5000  & 0.499999998819 & 0.499999999776 \\
    $b_{5_y}$ &  1.5000  & 1.499999999660 & 1.500000000194 \\
    $b_{5_z}$ &  1.0000  & 0.999999999603 & 0.999999999086 \\
    $d_{5}$   &  2.0000  & 1.999999998435 & 1.999999998693 \\

    $a_{6_x}$ & --0.2500\,\,\, & --0.250000000190\,\,\, & --0.249999999931\,\,\, \\
    $a_{6_y}$ &  1.2500 & 1.249999999364 & 1.250000000155 \\
    $a_{6_z}$ &  0.2500 & 0.250000002270 & 0.250000001515 \\
    $b_{6_x}$ & --0.2500\,\,\, & --0.249999999033\,\,\, & --0.249999999292\,\,\, \\
    $b_{6_y}$ &  1.2500 & 1.250000001020 & 1.250000000228 \\
    $b_{6_z}$ &  1.0000 & 0.999999999682 & 1.000000000437 \\
    $d_{6}$   &  1.5625 & 1.562500000179 & 1.562499999676 \\
    \bottomrule
    \end{tabular}
\end{table}

\begin{table}
\caption{Constant values truncated to single precision}
\label{table:6R_ParametersFixed}
\centering
    \begin{tabular}[2in]{cccc}
    \toprule
    Constant & Single Precision &  Constant & Single Precision \\
    \midrule
    $a_{08}$\,\, & \,\,\,7.4052387$\cdot$10$^{-2}$ & $a_{28}$\,\, & \,\,\,1.9594662$\cdot$10$^{-1}$\\
    $a_{09}$\,\, & --8.3050031$\cdot$10$^{-2}$ & $a_{29}$\,\, & --1.2280341$\cdot$10$^{0}$\,\,\,\,\\
    $a_{010}$ & --3.8615960$\cdot$10$^{-1}$ & $a_{210}$ & \,\,\,0.0000000$\cdot$10$^{0}$\,\,\,\,\\
    $a_{011}$ & --7.5526603$\cdot$10$^{-1}$ & $a_{211}$ & --7.9034219$\cdot$10$^{-2}$\\
    $a_{012}$ & \,\,\,5.0420168$\cdot$10$^{-1}$ & $a_{212}$ & \,\,\,2.6387877$\cdot$10$^{-2}$\\
    $a_{013}$ & --1.0916286$\cdot$10$^{0}$\,\,\,\, & $a_{213}$ & --5.7131429$\cdot$10$^{-2}$\\
    $a_{014}$ & \,\,\,0.0000000$\cdot$10$^{0}$\,\,\,\, & $a_{214}$ & --1.1628081$\cdot$10$^{0}$\,\,\,\,\\
    $a_{015}$ & \,\,\,4.0026384$\cdot$10$^{-1}$ & $a_{215}$ & \,\,\,1.2587767$\cdot$10$^{0}$\,\,\,\,\\
    $a_{016}$ & \,\,\,4.9207289$\cdot$10$^{-2}$ & $a_{216}$ & \,\,\,2.1625749$\cdot$10$^{0}$\,\,\,\,\\
    
    $a_{18}$\,\, & --3.7157270$\cdot$10$^{-2}$ & $a_{38}$\,\, & --2.0816985$\cdot$10$^{-1}$\\
    $a_{19}$\,\, & \,\,\,3.5436895$\cdot$10$^{-2}$ & $a_{39}$\,\, & \,\,\,2.6868319$\cdot$10$^{0}$\,\,\,\,\\
    $a_{110}$ & \,\,\,8.5383480$\cdot$10$^{-2}$ & $a_{310}$ & --6.9910317$\cdot$10$^{-1}$\\
    $a_{111}$ & \,\,\,0.0000000$\cdot$10$^{0}$\,\,\,\, & $a_{311}$ & \,\,\,3.5744412$\cdot$10$^{-1}$\\
    $a_{112}$ & --3.9251967$\cdot$10$^{-2}$ & $a_{312}$ & \,\,\,1.2499117$\cdot$10$^{0}$\,\,\,\,\\
    $a_{113}$ & \,\,\,0.0000000$\cdot$10$^{0}$\,\,\,\, & $a_{313}$ & \,\,\,1.4677360$\cdot$10$^{0}$\,\,\,\,\\
    $a_{114}$ & --4.3241927$\cdot$10$^{-1}$ & $a_{314}$ & \,\,\,1.1651719$\cdot$10$^{0}$\,\,\,\,\\
    $a_{115}$ & \,\,\,0.0000000$\cdot$10$^{0}$\,\,\,\, & $a_{315}$ & \,\,\,1.0763397$\cdot$10$^{0}$\,\,\,\,\\
    $a_{116}$ & \,\,\,1.3873009$\cdot$10$^{-2}$ & $a_{316}$ & --6.9686807$\cdot$10$^{-1}$\\ 
    \bottomrule
    \end{tabular}
\end{table}

\begin{table}
\caption{Initial (double precision), perturbed (truncated single precision), and recovered (double precision) parameters}
\label{table:6R_Parameters}
\centering
    \begin{tabular}[2in]{ccc}
    \toprule
    Parameter & Single$\vert$Double Precision & Recovered \\
    \midrule
    $a_{00}$ & --2.4915068$\vert$11232596$\cdot$10$^{-1}$ & --2.491506848757833$\cdot$10$^{-1}$\\
    $a_{01}$ & \,\,\,1.6091353$\vert$78745045$\cdot$10$^{0}$\,\,\: & \,\,\,1.609135324728055$\cdot$10$^{0}$\,\,\:\\
    $a_{02}$ & \,\,\,2.7942342$\vert$61384628$\cdot$10$^{-1}$ & \,\,\,2.794234123846178$\cdot$10$^{-1}$\\
    $a_{03}$ & \,\,\,1.4348015$\vert$88307759$\cdot$10$^{0}$\,\,\: & \,\,\,1.434801543598025$\cdot$10$^{0}$\,\,\:\\ 
    $a_{04}$ & \,\,\,0.0000000$\vert$00000000$\cdot$10$^{0}$\,\,\: & \,\,\,2.329107073061927$\cdot$10$^{-8}$\\
    $a_{05}$ & \,\,\,4.0026384$\vert$20852447$\cdot$10$^{-1}$ & \,\,\,4.002638399151275$\cdot$10$^{-1}$\\
    $a_{06}$ & --8.0052768$\vert$41704895$\cdot$10$^{-1}$ & --8.005276506597172$\cdot$10$^{-1}$\\
    $a_{07}$ & \,\,\,0.0000000$\vert$00000000$\cdot$10$^{0}$\,\,\: & \,\,\,1.339330350134300$\cdot$10$^{-8}$\\ 
    
    $a_{10}$ & \,\,\,1.2501635$\vert$03697273$\cdot$10$^{-1}$ & \,\,\,1.250163518996785$\cdot$10$^{-1}$\\
    $a_{11}$ & --6.8660735$\vert$90276054$\cdot$10$^{-1}$ & --6.866073304900900$\cdot$10$^{-1}$\\
    $a_{12}$ & --1.1922811$\vert$66678474$\cdot$10$^{-1}$ & --1.192281095708419$\cdot$10$^{-1}$\\ 
    $a_{13}$ & --7.1994046$\vert$84195284$\cdot$10$^{-1}$ & --7.199404481832083$\cdot$10$^{-1}$\\ 
    $a_{14}$ & --4.3241927$\vert$30334479$\cdot$10$^{-1}$ & --4.324192773933984$\cdot$10$^{-1}$\\ 
    $a_{15}$ & \,\,\,0.0000000$\vert$00000000$\cdot$10$^{0}$\,\,\: & \,\,\,1.358542627603532$\cdot$10$^{-8}$\\
    $a_{16}$ & \,\,\,0.0000000$\vert$00000000$\cdot$10$^{0}$\,\,\: & --1.039184803095887$\cdot$10$^{-9}$\\
    $a_{17}$ & --8.6483854$\vert$60668959$\cdot$10$^{-1}$ & --8.648385383114613$\cdot$10$^{-1}$\\

    $a_{20}$ & --6.3555007$\vert$06536143$\cdot$10$^{-1}$ &  --6.355500280163283$\cdot$10$^{-1}$\\
    $a_{21}$ & --1.1571992$\vert$24063992$\cdot$10$^{-1}$ &  --1.157199361445811$\cdot$10$^{-1}$\\
    $a_{22}$ & --6.6640447$\vert$34656436$\cdot$10$^{-1}$ &  --6.664044436579097$\cdot$10$^{-1}$\\
    $a_{23}$ & \,\,\,1.1036211$\vert$15850889$\cdot$10$^{-1}$ &  \,\,\,1.103620867759053$\cdot$10$^{-1}$\\
    $a_{24}$ & \,\,\,2.9070203$\vert$22913935$\cdot$10$^{-1}$ &  \,\,\,2.907020211729024$\cdot$10$^{-1}$\\
    $a_{25}$ & \,\,\,1.2587767$\vert$24480555$\cdot$10$^{0}$\,\,\: &  \,\,\,1.258776710166779$\cdot$10$^{0}$\,\,\:\\
    $a_{26}$ & --6.2938836$\vert$22402776$\cdot$10$^{-1}$ &  --6.293883708977084$\cdot$10$^{-1}$\\
    $a_{27}$ & \,\,\,5.8140406$\vert$45827871$\cdot$10$^{-1}$ &  \,\,\,5.814040462810132$\cdot$10$^{-1}$\\

    $a_{30}$ & \,\,\,1.4894773$\vert$41316300$\cdot$10$^{0}$\,\,\: &  \,\,\,1.489477303748473$\cdot$10$^{0}$\,\,\:\\
    $a_{31}$ & \,\,\,2.3062341$\vert$36720304$\cdot$10$^{-1}$ &  \,\,\,2.306233954795566$\cdot$10$^{-1}$\\
    $a_{32}$ & \,\,\,1.3281073$\vert$07376312$\cdot$10$^{0}$\,\,\: &  \,\,\,1.328107268535429$\cdot$10$^{0}$\,\,\:\\
    $a_{33}$ & --2.5864502$\vert$59957599$\cdot$10$^{-1}$ &  --2.586450384436285$\cdot$10$^{-1}$\\
    $a_{34}$ & \,\,\,1.1651719$\vert$51133394$\cdot$10$^{0}$\,\,\: &  \,\,\,1.165171916593329$\cdot$10$^{0}$\,\,\:\\
    $a_{35}$ & --2.6908493$\vert$58556267$\cdot$10$^{-1}$ &  --2.690849292497942$\cdot$10$^{-1}$\\
    $a_{36}$ & \,\,\,5.3816987$\vert$17112534$\cdot$10$^{-1}$ &  \,\,\,5.381698714725988$\cdot$10$^{-1}$\\
    $a_{37}$ & \,\,\,5.8258597$\vert$55666972$\cdot$10$^{-1}$ &  \,\,\,5.825859575485448$\cdot$10$^{-1}$\\    
    \bottomrule
    \end{tabular}
\end{table}

\end{document}